\newtheorem{theorem}{Theorem}[section]
\newtheorem{lemma}[theorem]{Lemma}
\newtheorem{proposition}[theorem]{Proposition}
\newtheorem{corollary}[theorem]{Corollary}
\newtheorem{definition}[theorem]{Definition}
\newtheorem{conjecture}[theorem]{Conjecture}
\newtheorem{remark}[theorem]{Remark}
\numberwithin{equation}{section}
\begin{document}
\setcounter{page}{1}


\title{The Complex Angle in Normed Spaces}

\author{VOLKER W. TH\"UREY  \\  Rheinstr. 91  \\   28199 Bremen,  Germany    
     \thanks{T: 49 (0)421/591777 \  \   E-Mail: volker@thuerey.de }   } 
 \maketitle
 




 \begin{abstract}
  We consider a generalized angle in complex normed vector spaces.
        Its definition corresponds to the definition of the well known Euclidean angle in  real inner product spaces.
        Not surprisingly it yields complex values as `angles'. This `angle' has
        some simple  properties, which are known from the usual angle in real inner product spaces. 
        But to do ordinary Euclidean geometry real angles are necessary. 
        We show that even in a complex normed space     
        there are many pure real valued `angles'. The situation improves yet in inner product spaces.
        There we can use the known theory of orthogonal systems to find many pairs of vectors with real angles, 
        and to do geometry which is  based on the Greeks 2000 years ago. 
 \end{abstract}
     {\it Keywords and phrases}: complex normed space, generalized angle, complex Hilbert space  \\ 
     {\it Mathematical Subject Classification}: 46B20, 46C05, 30E99   
  \tableofcontents     
  	    \section{Introduction}      \ \ \  \ \ \ \   
  	 We deal with complex vector spaces   $ X $ provided with a norm  $ \| \cdot \|$. To initiate the following
  	 constructions we begin with the special case of an inner product space \                                
	   $ \left( X , <  . \, | \, . > \right) $ over the complex field ${\mathbb C}$.  It is well-known that the inner
	   product can be expressed by the norm, namely for $ \vec{x} , \vec{y} \in X $ we can write 
	 \begin{align}    \label{allererste definition}
	       <\vec{x} \ | \ \vec{y}> \ = \  
         \frac{1}{4} \cdot \left[ \: \|\vec{x}+\vec{y}\|^{2} -   \|\vec{x}-\vec{y}\|^{2} \: 
         + {\bf i} \cdot \left( \: \|\vec{x} + {\bf i} \cdot \vec{y}\|^{2} -  
         \|\vec{x} - {\bf i} \cdot \vec{y}\|^{2} \: \right) \: \right] \, , 
   \end{align}      
         where the symbol `{\bf i}' means the imaginary unit. \\
         For two vectors $ \vec{x}, \vec{y} \neq \vec{0} $  it holds \ \
         $ <\vec{x} \ | \ \vec{y}> \ = \ \|\vec{x}\| \cdot \|\vec{y}\| \cdot 
          < \frac{ \vec{x} }{\|\vec{x}\|} \ | \ \frac{\vec{y}}{\|\vec{x}\|}> $.   \ 
    We use both facts and an idea in \cite{Singer} to generate a continuous product in all complex normed
    vector spaces   $ (X, \| \cdot \|)$, which is just the inner product in the special case of a complex 
    inner product space.      
          
  %
  %
   \begin{definition}   \label{die erste definition}         
               Let   $ \vec{x}, \vec{y}$ be two arbitrary elements 
               of $ X $. In the case of  \ $ \vec{x} = \vec{0} $  or  $ \vec{y} = \vec{0}  $  we set  
               \ $ < \vec{x} \: | \:  \vec{y} > \ := \, 0 $,  and if  \
               $ \vec{x} , \vec{y} \neq \vec{0} \ ( \text{i.e.} \ \|\vec{x}\| \cdot \|\vec{y}\| > 0 \; )$ 
      \ we define the complex number   \\  \\
               \centerline { $ < \vec{x} \: | \:  \vec{y} >  \ :=  $  } 
        $$        \|\vec{x}\| \cdot \|\vec{y}\| \cdot  \frac{1}{4} \cdot  
                  \left[ \left\| \frac{\vec{x}}{\|\vec{x}\|}   +  \frac{\vec{y}}{\|\vec{y}\|} \right\|^{2}   - 
                  \left\| \frac{\vec{x}}{\|\vec{x}\|} - \frac{\vec{y}}{\|\vec{y}\|} \right\|^{2} +
                  {\bf i} \cdot  \left( \left\| \frac{\vec{x}}{\|\vec{x}\|}  +  
                  {\bf i} \cdot \frac{ \vec{y}}{\|\vec{y}\|} \right\|^{2} - \left\| \frac{\vec{x}}{\|\vec{x}\|} -
                  {\bf i} \cdot \frac{ \vec{y}}{\|\vec{y}\|} \right\|^{2}  \right)  \right] \ . 
                                                                $$    $  {  }  \hfill  \Box  $  
   \end{definition}        
        It is easy to show that the product fulfils  the conjugate symmetry  ($ <\vec{x} | \vec{y}>$ 
        $ \ = \ \overline{<\vec{y} | \vec{x}>} $), where $\overline{<\vec{y} | \vec{x}>}$ means the complex
        conjugate, the positive definiteness ($ < \vec{x} | \vec{x} > \ \geq \ 0 \
        \text{and also}$  $ < \vec{x} | \vec{x} > = 0 \ \text{only for} \ \vec{x} = \vec{0} $), and  the homogeneity 
        for real numbers, ($  < r \cdot \vec{x} \, | \, \vec{y} > \ = \ r \cdot <  \vec{x} \, | \, \vec{y} > $), 
        and  the homogeneity for pure imaginary numbers, \ ($  < r \cdot {\bf i} \cdot \vec{x} \, | \, \vec{y} > \ 
        = \ r \cdot {\bf i} \cdot <  \vec{x} \, | \, \vec{y} > $), \ for 
        $ \vec{x},  \vec{y} \in X , \ r \in {\mathbb R}$. \ Further, for $ \vec{x} \in X $ it holds \
        $ \|\vec{x}\| = \sqrt{< \vec{x} \: | \: \vec{y} >}$.      
                                                
       If we switch for a moment to real inner product spaces  \ $ \left( X , <  . \, | \, . >_{real} \right) $ \
       we have  for all  $ \vec{x},\vec{y} \neq \vec{0} $ \ the  usual Euclidean angle  \
   \begin{align*}       \angle_{Euclid} (\vec{x}, \vec{y}) \ = \
         \arccos{ \left( \frac{< \vec{x} \: | \: \vec{y} >_{real} }{ \|\vec{x}\| \cdot \|\vec{y}\| } \right) }   
         \  = \  \arccos{ \left( \ \frac{1}{4} \cdot   
         \left[ \ \left\| \frac{\vec{x}}{\|\vec{x}\|} + \frac{\vec{y}}{\|\vec{y}\|} \right\|^{2} \ - \
         \left\| \frac{\vec{x}}{\|\vec{x}\|} -  \frac{\vec{y}}{\|\vec{y}\|} \right\|^{2} \ \right] \ \right) } \ , 
   \end{align*}    
        which can be defined in terms of the norm, too.   
                                                                
         For two vectors  $ \vec{x}, \vec{y} \neq \vec{0}$  from a complex normed vector space $ ( X, \| \cdot \| )$ 
	       we are able to define an `angle'  which  coincides with the definition of the Euclidean angle in 
	       real inner product spaces. 
	 \begin{definition}   \label{die zweite definition}   
	      Let  $ \vec{x}, \vec{y}$ be two  elements  of $ X \backslash \{ \vec{0}\}$. \ We define the complex  number 
	    \begin{equation*}   
	        \angle (\vec{x}, \vec{y}) \ := \
          \arccos{ \left( \frac{< \vec{x} \: | \: \vec{y} > } { \|\vec{x}\| \cdot \|\vec{y}\| }  \right) } \ .
      \end{equation*}        
       This number $ \angle (\vec{x}, \vec{y}) \in {\mathbb C} $   is called the {\it angle} 
	     of the pair $ (\vec{x}, \vec{y}) $.   \hfill     $\Box$ 
   \end{definition}  
      We state that  the angle 
        $ \angle  (\vec{x}, \vec{y}) $ is defined for all \  $ \vec{x}, \vec{y} \neq \vec{0} $. 
         Since we deal with complex vector spaces it is not surprising that we get complex numbers as `angles'. 
	       For the definition we need the extension of the cosine and  arccosine functions on complex numbers.  
	       We use two subsets $ {\cal A} $ and $ {\cal B} $ of the complex plane ${\mathbb C}$, where 
	   \begin{align}
	         & {\cal A} \ := \  \{ a + {\bf i} \cdot b \; \in {\mathbb C}  \ | \ 0 < a < \pi , \ b \in {\mathbb R} \} 
	         \ \cup \ \{ 0 , \pi \} , \ \ \text{and}   \\ 
	         & {\cal B} \ := \ {\mathbb C} \backslash \{ r \in {\mathbb R} \: | \: r < -1 \ \ \text{or} \ \ r > +1 \}. 
     \end{align}   
	        We have two known homeomorphisms 
	   \begin{align*}
	         & \cos: \ {\cal A} \stackrel{\cong}{\longrightarrow} {\cal B} \ \ \ \text{and} \ \  
	          \arccos: \ {\cal B} \stackrel{\cong}{\longrightarrow} {\cal A} \ .   \ \
	   \end{align*}
	       The cosines of the `angles' are  are in the `complex square' \
         $ {\cal SQ} :=  \{ r + {\bf i} \cdot s \in {\mathbb C} \ | \ -1 \leq r, s \leq +1 \}  \subset {\cal B} $.
         The values of the `angles' are from its image $ \arccos({\cal SQ}) $, which forms a symmetric hexagon in
         $ {\cal A} $.  Its center is $ \pi/2$, two corners are $ 0 $ and $ \pi $.  A third is, for instance, \\
      \centerline{ $ \arccos(1 + {\bf i}) \ = \ (\pi/2) - (1/2) \cdot \left[ \arccos\left(\sqrt{5}-2\right) +
         {\bf i} \cdot \log \left(  \sqrt{5}+2 + 2 \cdot \sqrt{\sqrt{5}+2}  \right)\right] \ 
         \approx \ 0.90 - {\bf i} \cdot 1.1 $. }   
                                                    
	         This `angle' has eight comfortable properties ({\tt An} 1) - ({\tt An} 8) which are 
	         known from the   Euclidean angle in real inner product spaces.
  \begin{theorem}  \label{allererstes theorem}
    	In a complex normed space   $ (X, \| \cdot \|) $ the angle $ \angle $ has the	following properties:         
    \begin{itemize}
          \item ({\tt An} 1)   \quad $\angle  \ $  is a continuous map from   
                $ \left( X \backslash \{\vec{0}\} \right)^{2} $ onto a subset of \
                $ \arccos({\cal SQ}) \subset {\cal A} $. \\ 
             For elements   $ \vec{x},\vec{y} \neq \vec{0}$   it holds that                     
          \item ({\tt An} 2)   \quad  $\angle                 (\vec{x}, \vec{x}) = 0 $,  
          \item ({\tt An} 3)   \quad  $\angle                  ( -\vec{x}, \vec{x}) = \pi$,   
          \item ({\tt An} 4)   \quad  $\angle                  (\vec{x}, \vec{y}) 
                               =              \overline{\angle (\vec{y}, \vec{x})} $, 
          \item ({\tt An} 5)   \quad  for real numbers \ $ r,s > 0 $   we  have    
                               $\angle                 (r \cdot \vec{x}, s \cdot \vec{y}) 
                               = \angle                  (\vec{x},\vec{y})$, 
          \item ({\tt An} 6)   \quad  $\angle                 ( - \vec{x}, - \vec{y}) 
                               = \angle                  (\vec{x},\vec{y})$,    
          \item ({\tt An} 7)   \quad  $ \angle                 (\vec{x}, \vec{y}) + 
                               \angle                 (-\vec{x}, \vec{y})  = \pi $.  
          \item ({\tt An} 8)   \quad
          \quad   For any two  linear independent vectors \ $ \vec{x},\vec{y}$ \ of  \  
           $( X , \| \cdot \| )$ \  there is  a continuous injective map \ \     
                         $  \Theta: \ \mathbb{R} \longrightarrow  {\cal A}  , \ \
                           t \mapsto \angle (\vec{x}, \vec{y} + t \cdot\vec{x})$.     \ \
       The limits are \\ \centerline{ $ \lim_{t\rightarrow -\infty} \ \Theta (t) = \pi $ \ and \  
                                 $ \lim_{t\rightarrow \infty} \ \Theta (t) = 0$. }                       
  \end{itemize} 
  \end{theorem}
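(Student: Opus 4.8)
The plan is to reduce everything to the normalized cosine $c(\vec x,\vec y):=\langle\vec x\,|\,\vec y\rangle/(\|\vec x\|\cdot\|\vec y\|)$, so that $\angle(\vec x,\vec y)=\arccos\bigl(c(\vec x,\vec y)\bigr)$, and then to read off (An 1)--(An 7) directly from the formula in Definition \ref{die erste definition} together with the stated algebraic properties of the product and the two facts about the complex $\arccos$ recorded before the theorem. Abbreviating $\vec u:=\vec x/\|\vec x\|$ and $\vec v:=\vec y/\|\vec y\|$ we have
\[ c(\vec x,\vec y)=\tfrac14\bigl(\|\vec u+\vec v\|^{2}-\|\vec u-\vec v\|^{2}\bigr)+\tfrac{{\bf i}}{4}\bigl(\|\vec u+{\bf i}\cdot\vec v\|^{2}-\|\vec u-{\bf i}\cdot\vec v\|^{2}\bigr), \]
which is continuous in $(\vec x,\vec y)$ on $\bigl(X\backslash\{\vec0\}\bigr)^{2}$ since the norm and the normalisation maps are.

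For (An 1) I would first check $c(\vec x,\vec y)\in{\cal SQ}$: the triangle inequality gives $0\le\|\vec u\pm\vec v\|\le 2$, so the real bracket lies in $[-1,1]$, and since $\|{\bf i}\cdot\vec v\|=\|\vec v\|=1$ the same bound holds for the imaginary bracket; continuity of $\angle$ then follows from that of $c$ and of $\arccos$ on ${\cal B}\supset{\cal SQ}$. Properties (An 2) and (An 3) are immediate from $\langle\vec x\,|\,\vec x\rangle=\|\vec x\|^{2}$ and $\langle-\vec x\,|\,\vec x\rangle=-\|\vec x\|^{2}$, which give $c=1$ and $c=-1$, hence $\arccos(1)=0$ and $\arccos(-1)=\pi$. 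For (An 4), conjugate symmetry yields $c(\vec x,\vec y)=\overline{c(\vec y,\vec x)}$, and since $\cos(\bar w)=\overline{\cos w}$ while ${\cal A}$ and ${\cal B}$ are invariant under conjugation, the homeomorphism satisfies $\arccos(\bar z)=\overline{\arccos z}$; combining the two gives the claim. Properties (An 5) and (An 6) hold because passing from $\vec x,\vec y$ to $r\vec x,s\vec y$ with $r,s>0$ leaves $\vec u,\vec v$ unchanged, while passing to $-\vec x,-\vec y$ leaves all four norms in the formula unchanged, so $c$ is unchanged in both cases. Finally (An 7) follows from $c(-\vec x,\vec y)=-c(\vec x,\vec y)$ (both brackets merely change sign) together with the identity $\arccos(-z)=\pi-\arccos z$ on ${\cal B}$, which holds because $\pi-w\in{\cal A}$ whenever $w\in{\cal A}$ and $\cos(\pi-w)=-\cos w$.

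For (An 8), linear independence of $\vec x,\vec y$ forces $\vec y+t\vec x\ne\vec0$ for every $t\in\mathbb R$, so $\Theta$ is defined on all of $\mathbb R$ and, by (An 1), takes values in ${\cal A}$; continuity of $t\mapsto\vec y+t\vec x$ and (An 1) give continuity of $\Theta$. For the limits I would use that $(\vec y+t\vec x)/\|\vec y+t\vec x\|\to\vec u$ as $t\to+\infty$ and $\to-\vec u$ as $t\to-\infty$: continuity of $c$ and (An 2), (An 3), (An 4) then give $\Theta(t)\to\angle(\vec x,\vec x)=0$ and $\Theta(t)\to\angle(\vec x,-\vec x)=\overline{\angle(-\vec x,\vec x)}=\pi$.

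The injectivity of $\Theta$ is the part I expect to require the most care. Since $\arccos$ is injective it suffices to show that $t\mapsto c(t):=c(\vec x,\vec y+t\vec x)$ is injective, and for this I would prove that its real part $\mathrm{Re}\,c(t)=\tfrac14\bigl(\|\vec u+\vec w(t)\|^{2}-\|\vec u-\vec w(t)\|^{2}\bigr)$, with $\vec w(t)=(\vec y+t\vec x)/\|\vec y+t\vec x\|$, is strictly increasing. Writing $g(s):=\|\vec y+s\vec x\|^{2}$ (convex, with $g(s)/s^{2}\to\|\vec x\|^{2}=:\mu^{2}$) and $\lambda(t):=\sqrt{g(t)}/\mu$, a short computation recasts this as $\mathrm{Re}\,c(t)=\bigl(g(t+\lambda)-g(t-\lambda)\bigr)/\bigl(4g(t)\bigr)$. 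In the differentiable inner product case one finds, via $g(s)=\|\vec y\|^{2}+2s\,\mathrm{Re}\langle\vec x\,|\,\vec y\rangle+s^{2}\|\vec x\|^{2}$, that $\tfrac{d}{dt}\mathrm{Re}\,c(t)=\tfrac{\lambda(t)}{g(t)^{2}}\bigl(\|\vec x\|^{2}\|\vec y\|^{2}-(\mathrm{Re}\langle\vec x\,|\,\vec y\rangle)^{2}\bigr)$, which is positive by strict Cauchy--Schwarz; the real work is to secure the corresponding strict inequality for an arbitrary, possibly non-smooth and non-strictly-convex, norm. Here I would exploit that $\vec u+\vec w$ and $\vec u-\vec w$ move in opposite directions as $\vec w$ varies, so that the two terms $\|\vec u\pm\vec w\|$ cannot both be locally constant except at isolated parameter values. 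Pinning this heuristic down into a rigorous proof of strict monotonicity of $\mathrm{Re}\,c$ is the main obstacle, after which the computed limits together with injectivity of $\arccos$ complete (An 8).
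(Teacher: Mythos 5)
Your treatment of ({\tt An} 1)--({\tt An} 7) is correct and is essentially the paper's own argument: the paper derives ({\tt An} 1) from the triangle-inequality bound (its Lemma \ref{lemma eins} and Corollary \ref{corollary always angle}), ({\tt An} 2)--({\tt An} 6) from the algebraic properties of the product (Proposition \ref{proposition drei}) together with elementary properties of the complex $\arccos$, and ({\tt An} 7) from the sign change $<-\vec{x}\,|\,\vec{y}> \;=\; -<\vec{x}\,|\,\vec{y}>$ combined with the central symmetry $\arccos(-z)=\pi-\arccos(z)$ (its Proposition \ref{proposition eins}), which is exactly your route. Your limit computation in ({\tt An} 8), via $\vec{w}(t)=(\vec{y}+t\vec{x})/\|\vec{y}+t\vec{x}\|\to\pm\vec{u}$ and continuity of $c$, is sound and in fact more self-contained than the paper, which refers the limits to \cite{Thuerey2} and \cite{Diminnie/Andalafte/Freese1}.

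The genuine gap is the injectivity claim in ({\tt An} 8), and you have correctly located it but not closed it. Your reduction --- injectivity of $\Theta$ follows once $\mathrm{Re}\,c(t)=\frac14\bigl(\|\vec{u}+\vec{w}(t)\|^{2}-\|\vec{u}-\vec{w}(t)\|^{2}\bigr)$ is strictly increasing --- is precisely the reduction the paper makes: it defines $\widetilde{\Theta}(t)$ to be this real part and then invokes the main theorem of \cite{Thuerey2} (itself built on \cite{Diminnie/Andalafte/Freese1}), which asserts that for any two linearly independent vectors in a (real two-dimensional) normed plane the map $\widetilde{\Theta}$ is an increasing homeomorphism of $\mathbb{R}$ onto $(-1,1)$. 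That statement is the substance of a separate published result; it is not a routine verification. Your Cauchy--Schwarz computation covers only the smooth inner-product case, and your heuristic --- that $\|\vec{u}+\vec{w}\|$ and $\|\vec{u}-\vec{w}\|$ ``cannot both be locally constant except at isolated parameter values'' --- would, even if made rigorous, only exclude intervals on which $\mathrm{Re}\,c$ is constant; it does not show that $\mathrm{Re}\,c$ is monotone at all, and non-monotone behaviour is exactly what must be ruled out for a general, possibly non-smooth and non-strictly-convex norm. So as written the proposal proves ({\tt An} 8) only for inner product spaces; for the general normed space of the theorem you must either reproduce the monotonicity argument of \cite{Diminnie/Andalafte/Freese1}/\cite{Thuerey2} or cite it, as the paper does.
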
   {  $ $ }  
    In the following table we note some angles and their cosines.  
      We choose two elements $ \vec{x},\vec{y} \neq \vec{0} $ \ of a complex normed space $ ( X, \| \cdot \| )$, 
      and six suitable real numbers \ $ a, b, r, s, v, w $ with 
      $ - \frac{\pi}{2} \leq a, v \leq \frac{\pi}{2}$ \ and \ $ -1 \leq r, s \leq 1 $, such that  \\ 
       \centerline {   $ \angle (\vec{x},\vec{y}) =: \frac{\pi}{2} + a + {\bf i} \cdot b \in {\cal A}$,   } \\  
       \centerline {  $ \cos(\angle (\vec{x},\vec{y})) = 
       \cos \left(\frac{\pi}{2} + a + {\bf i} \cdot b \right) =: r + {\bf i} \cdot s \in {\cal B}$, \ \ and    }  \\  
       \centerline {  $ \angle ({\bf i} \cdot \vec{x},\vec{y}) =: 
                               \frac{\pi}{2} + v + {\bf i} \cdot w \in {\cal A}$. } \\ 
      Note that the cosines of all angles in the table (third column)  have the same modulus \
      $\sqrt{ r^{2} + s^{2}} $.  \\        
   \begin{center}  
   \begin{tabular}{c|l|l|c|c}     \hline
        pair of vectors  & their angle $ \angle $ & the cosine of $ \angle $ & the angle for $ \vec{x} = \vec{y} $ 
                                                                         & its cosine for $ \vec{x} = \vec{y} $   \\ 
        \hline\hline
        $(\vec{x},\vec{y})$  & $ \frac{\pi}{2} + a + {\bf i} \cdot b $     
                             &  $ r + {\bf i} \cdot s $ & $ 0 $ & $ 1 $  \\
        $(-\vec{x},\vec{y})$  & $ \frac{\pi}{2} - a - {\bf i} \cdot b $     
                              & $ -r - {\bf i} \cdot s $ & $ \pi $ & $ -1 $   \\ 
        $(\vec{y},\vec{x})$  & $  \frac{\pi}{2} + a - {\bf i} \cdot b $  
                           & $ r - {\bf i} \cdot s $ & $ 0 $ & $ 1 $     \\
        $(-\vec{y},\vec{x})$  & $  \frac{\pi}{2} - a + {\bf i} \cdot b $    
                              & $ -r + {\bf i} \cdot s $   & $ \pi $ & $ -1 $   \\   \hline 
        $({\bf i} \cdot \vec{x},\vec{y})$  & $  
                   \frac{\pi}{2} + v + {\bf i} \cdot w  $ &  $ -s + {\bf i} \cdot r $ & 
                    $\frac{\pi}{2} - {\bf i} \cdot \log \left[ \sqrt{2}+1 \right]$  &  ${\bf i}$  \\
        $(\vec{y},{\bf i} \cdot \vec{x})$  & $ 
                   \frac{\pi}{2} + v - {\bf i} \cdot w  $ &  $ -s - {\bf i} \cdot r $ &
                    $\frac{\pi}{2} + {\bf i} \cdot \log \left[ \sqrt{2}+1 \right]$  &  $-{\bf i}$   \\ 
        $(\vec{x},{\bf i} \cdot \vec{y})$  & $    
                   \frac{\pi}{2} - v - {\bf i} \cdot w $ &  $  s - {\bf i} \cdot r $ &
                      $\frac{\pi}{2} + {\bf i} \cdot \log \left[ \sqrt{2}+1 \right]$  &  $-{\bf i}$   \\   
        $({\bf i} \cdot \vec{y},\vec{x})$  & $   
                   \frac{\pi}{2} - v + {\bf i} \cdot w $ &  $  s + {\bf i} \cdot r $ &
                      $\frac{\pi}{2} - {\bf i} \cdot \log \left[ \sqrt{2}+1 \right]$  &  ${\bf i}$   \\  \hline
   \end{tabular} 
   \end{center}     $ { } $    \\ \\      
        With given two vectors $ \vec{x},\vec{y} \in X $ we consider as before the angle 
        $  \angle ( \vec{x},\vec{y}) = \frac{\pi}{2} + a + {\bf i} \cdot b $ with suitable real numbers $ a, b $.
        Now we express the complex number $ \angle ({\bf i} \cdot \vec{x},\vec{y}) $ in dependence of $ a $ and $ b $.      
        For the presentation the real valued cosine and hyperbolic cosine are used, and their inverses
        arccosine and area hyperbolic cosine. We use abbreviations like $ \cos, \cosh, \arccos $, and arcosh.  \  
        The result is as follows:   
   \begin{theorem}   \label{theorem winkelixy}
    
       In a complex normed space  $ ( X, \| \cdot \| )$ we take two elements 
         $ \vec{x}, \vec{y} \neq \vec{0}$.  We assume the angle  \
         $ \angle (\vec{x},\vec{y}) = \frac{\pi}{2} + a + {\bf i} \cdot b \in {\cal A} \, , \
        \text{i.e.} - \frac{\pi}{2} \leq a \leq  \frac{\pi}{2}$. \ $($If \ $ a = - \frac{\pi}{2}$ \ or \ 
        $ a = \frac{\pi}{2}$ \ since \ $ \angle (\vec{x},\vec{y}) \in {\cal A} $  \ it follows \ $ b = 0 $.$)$ 
        \  We get
        \begin{align}
          \angle ({\bf i} \cdot \vec{x},\vec{y}) \ = \ 
            \frac{\pi}{2} + \frac{1}{2} \cdot \left[ - {\rm sgn}(b) \cdot \arccos({\mathsf{H}}_-) +
            {\bf i} \cdot {\rm sgn}(a) \cdot {\rm arcosh}({\mathsf{H}}_+ ) \right] , 
        \end{align}    
           with the abbreviations \ $ {\mathsf{H}}_- $ \ and \ $ {\mathsf{H}}_+ $, where 
        \begin{align*}   
          & {\mathsf{H}}_{\pm} := \sqrt{\left[  \cos^{2}\left(\frac{\pi}{2} + a\right) + \cosh^{2}(b) - 2 \right]^{2}
            + 4 \cdot  \cos^{2}\left(\frac{\pi}{2} + a\right) \cdot \cosh^{2}(b) }  \\
           & \qquad \qquad \qquad  \qquad \qquad \qquad  \qquad  \qquad  \qquad \qquad \qquad  \qquad \quad
            \pm \left[  \cos^{2}\left(\frac{\pi}{2} + a\right) + \cosh^{2}(b) - 1  \right] \ . 
        \end{align*}
   \end{theorem}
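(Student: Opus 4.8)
The plan is to reduce everything to the cosine of the angle, where the homogeneity of the product does the real work. First I would record the single structural input: since $\|{\bf i} \cdot \vec{x}\| = \|\vec{x}\|$ and the product is homogeneous for pure imaginary scalars, $<{\bf i} \cdot \vec{x} \mid \vec{y}> \ = {\bf i} \cdot <\vec{x} \mid \vec{y}>$, so dividing by $\|{\bf i} \cdot \vec{x}\| \cdot \|\vec{y}\| = \|\vec{x}\| \cdot \|\vec{y}\|$ gives
\[
  \cos\bigl(\angle({\bf i} \cdot \vec{x}, \vec{y})\bigr) \ = \ {\bf i} \cdot \cos\bigl(\angle(\vec{x}, \vec{y})\bigr) .
\]
Writing $\angle(\vec{x},\vec{y}) = \frac{\pi}{2} + a + {\bf i} \cdot b$ and expanding via $\cos(p + {\bf i} \cdot q) = \cos(p)\cosh(q) - {\bf i} \cdot \sin(p)\sinh(q)$ together with $\sin(\frac{\pi}{2}+a) = \cos(a)$, the right-hand side becomes the explicit number
\[
  \cos\bigl(\angle({\bf i} \cdot \vec{x}, \vec{y})\bigr) \ = \ \cos(a)\sinh(b) + {\bf i} \cdot \cos\bigl(\tfrac{\pi}{2}+a\bigr)\cosh(b) ,
\]
which is precisely the entry $-s + {\bf i} \cdot r$ of the table above.

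Next I would set $\angle({\bf i} \cdot \vec{x}, \vec{y}) =: \frac{\pi}{2} + v + {\bf i} \cdot w \in {\cal A}$ (so $-\frac{\pi}{2} \le v \le \frac{\pi}{2}$ and $w \in {\mathbb R}$), expand its cosine in the same way, and compare real and imaginary parts. Abbreviating $C := \cos^2(\frac{\pi}{2}+a) = \sin^2(a)$ and $D := \cosh^2(b)$, this yields the pair
\begin{align*}
  \cos\bigl(\tfrac{\pi}{2}+v\bigr)\cosh(w) &= \cos(a)\sinh(b) , \\
  \sin\bigl(\tfrac{\pi}{2}+v\bigr)\sinh(w) &= \sin(a)\cosh(b) .
\end{align*}
Squaring both and setting $\alpha := \cos^2(\frac{\pi}{2}+v)$, $\beta := \cosh^2(w)$, the identities $\sin^2 = 1 - \cos^2$ and $\sinh^2 = \cosh^2 - 1$ collapse the system to $\alpha + \beta = C + D$ and $\alpha\beta = (1-C)(D-1)$, so $\alpha, \beta$ are the roots of $\lambda^2 - (C+D)\lambda + (1-C)(D-1) = 0$. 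A short simplification shows the discriminant equals $[C+D-2]^2 + 4CD$; since $\cosh^2 \ge 1 \ge \cos^2$, the larger root is $\beta$, giving $2\beta - 1 = \mathsf{H}_+$, while $2\alpha - 1 = -\mathsf{H}_-$ combined with $\cos(\pi + 2v) = -\cos(2v)$ gives $\cos(2v) = \mathsf{H}_-$. Reading these through $\cosh(2w) = 2\cosh^2(w) - 1$ and $\cos(2v) = 2\cos^2(v) - 1$, I obtain $\cosh(2w) = \mathsf{H}_+$ and $\cos(2v) = \mathsf{H}_-$, hence $|w| = \frac{1}{2}{\rm arcosh}(\mathsf{H}_+)$ and, as $|2v| \le \pi$, $|v| = \frac{1}{2}\arccos(\mathsf{H}_-)$.

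Finally I would fix the signs directly from the two unsquared equations, which is the step most likely to cause trouble since squaring has discarded them. Because $\frac{\pi}{2}+v \in [0,\pi]$ we have $\sin(\frac{\pi}{2}+v) \ge 0$, and because $|a| \le \frac{\pi}{2}$ we have $\cos(a) \ge 0$; as $\cosh > 0$ throughout, the second equation forces ${\rm sgn}(\sinh(w)) = {\rm sgn}(\sin(a))$, hence ${\rm sgn}(w) = {\rm sgn}(a)$, while the first forces ${\rm sgn}(\cos(\frac{\pi}{2}+v)) = {\rm sgn}(\sinh(b))$, i.e.\ ${\rm sgn}(-\sin(v)) = {\rm sgn}(b)$, hence ${\rm sgn}(v) = -{\rm sgn}(b)$. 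Combining the magnitudes with these signs produces $w = \frac{1}{2}{\rm sgn}(a)\,{\rm arcosh}(\mathsf{H}_+)$ and $v = -\frac{1}{2}{\rm sgn}(b)\arccos(\mathsf{H}_-)$, and inserting into $\frac{\pi}{2} + v + {\bf i} \cdot w$ yields the asserted identity. The only remaining care concerns the degenerate cases $a \in \{0, \pm\frac{\pi}{2}\}$ or $b = 0$, where a factor vanishes; there the convention ${\rm sgn}(0) = 0$ makes both sides collapse consistently — for example $a = 0$ forces $w = 0$ and a purely real angle — so these can be checked separately without difficulty.
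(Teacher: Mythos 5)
Your proof is correct, and it reaches the theorem by a genuinely different route than the paper's own proof. Both arguments begin identically: from the homogeneity of the product for pure imaginary scalars (Proposition \eqref{proposition drei}) one gets \ $\cos(\angle({\bf i}\cdot\vec{x},\vec{y})) = {\bf i}\cdot\cos(\angle(\vec{x},\vec{y})) = \cos(a)\cdot\sinh(b) + {\bf i}\cdot\cos\left(\frac{\pi}{2}+a\right)\cdot\cosh(b)$ \ (the paper cites its table here, but the table's proof is this same computation). From that point the paper simply plugs these coordinates into the closed-form arccosine of Definition \eqref{die dritte definition} --- the formula built from $ {\mathsf G}_{\pm} $ --- reduces the signum prefactors to ${\rm sgn}(b)$ and ${\rm sgn}(-a)$, and then verifies via $\sin^{2}+\cos^{2}=1=\cosh^{2}-\sinh^{2}$ that the resulting quantities ${\mathsf K}_{\pm}$ coincide with ${\mathsf H}_{\pm}$. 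You never invoke that closed-form inverse: you write the unknown angle as $\frac{\pi}{2}+v+{\bf i}\cdot w \in {\cal A}$ (legitimate by Corollary \eqref{corollary always angle}), equate real and imaginary parts of the two cosines, square, and identify $\alpha=\cos^{2}\left(\frac{\pi}{2}+v\right)$ and $\beta=\cosh^{2}(w)$ as the two roots of $\lambda^{2}-(C+D)\cdot\lambda+(1-C)\cdot(D-1)=0$, sorted by $\beta\geq 1\geq\alpha$, with the signs of $v$ and $w$ recovered afterwards from the unsquared equations. In effect you re-derive, in this special case, the inversion formula that the paper quotes; the price is your discriminant computation (which replaces the paper's identification ${\mathsf K}_{\pm}={\mathsf H}_{\pm}$), and the gain is a self-contained argument that also explains the shape of the answer, namely ${\mathsf H}_{+}=\cosh(2w)$ and ${\mathsf H}_{-}=\cos(2v)$, and makes the sign bookkeeping transparent where the paper compresses it into properties of ${\rm sgn}$. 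One cosmetic remark: the identity you cite as $\cos(2v)=2\cos^{2}(v)-1$ is not the one you actually use --- what you use, and state correctly just before, is $2\cos^{2}\left(\frac{\pi}{2}+v\right)-1=\cos(\pi+2v)=-\cos(2v)$; and your deferral of the degenerate cases is harmless, since when $a=0$ or $b=0$ the corresponding coordinate $w$ or $v$ vanishes together with its signum factor, exactly as you claim.
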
 {  $ $ }   \\
     It is worthwhile to look at special cases. We consider a real angle \
        $ \angle (\vec{x},\vec{y}) = \frac{\pi}{2} + a $ (i.e. $ b = 0 $), and an angle on the vertical axis 
        $ x = \frac{\pi}{2}$, \ this means \ $ \angle (\vec{x},\vec{y}) = \frac{\pi}{2} + {\bf i} \cdot b $
        (i.e. $ a = 0 $). 
   \begin{corollary}  \label{corollary pure real angle}
    For a pure real angle \ $ \angle (\vec{x},\vec{y}) = \frac{\pi}{2} + a $ \ with \
     $ -\frac{\pi}{2} \leq a \leq  \frac{\pi}{2} $, i.e. $ b = 0 $, we get a complex angle \
     $\angle({\bf i} \cdot \vec{x},\vec{y})$  with a real part \ $ \pi/2 $,  
   \begin{align*}  
         \angle({\bf i} \cdot \vec{x},\vec{y}) 
         \ & = \  \frac{\pi}{2} +  {\bf i} \cdot \frac{1}{2} \cdot {\rm sgn}( a ) \cdot 
               {\rm arcosh} \left( 2 \cdot \cos^{2} \left(\frac{\pi}{2} + a \right) + 1 \right)   \\
         \ & = \   \frac{\pi}{2} + {\bf i} \cdot {\rm sgn}( a ) \cdot
         \log \left[ \ \sqrt{ \cos^{2}\left(\frac{\pi}{2} + a \right) + 1}  
         + \left|\cos \left(\frac{\pi}{2} + a \right)\right| \ \right]  \ .  
   \end{align*}       
   \end{corollary}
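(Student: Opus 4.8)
The plan is to derive this corollary directly from Theorem~\ref{theorem winkelixy} as the special case $b = 0$; no new input about the norm or the product is needed, so the whole task is an elementary simplification of the two abbreviations $\mathsf{H}_-$ and $\mathsf{H}_+$. First I would set $b = 0$, so that $\cosh^2(b) = \cosh^2(0) = 1$, and introduce the shorthand $c := \cos^2(\pi/2 + a)$, observing that $c \ge 0$ throughout the range $-\pi/2 \le a \le \pi/2$. Substituting $\cosh^2(b)=1$ into the radicand of $\mathsf{H}_\pm$ turns it into $(c-1)^2 + 4c$, which collapses to the perfect square $(c+1)^2$; its nonnegative root is $c+1$. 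Hence $\mathsf{H}_- = (c+1) - c = 1$ and $\mathsf{H}_+ = (c+1) + c = 2c + 1$.

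Next I would insert these values into the formula of Theorem~\ref{theorem winkelixy}. Because $\arccos(\mathsf{H}_-) = \arccos(1) = 0$ --- and, redundantly, because ${\rm sgn}(b) = 0$ when $b = 0$ --- the real correction term drops out entirely, leaving $\angle({\bf i}\cdot\vec{x},\vec{y}) = \frac{\pi}{2} + {\bf i}\cdot\frac{1}{2}\,{\rm sgn}(a)\,{\rm arcosh}(2c+1)$. Rewriting $2c+1 = 2\cos^2(\pi/2+a)+1$ gives exactly the first displayed line of the corollary.

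Finally, to reach the logarithmic form of the second line, I would halve the area-hyperbolic cosine with a double-argument identity. From $\cosh(2u) = 2\cosh^2(u) - 1$, solving $\cosh(2u) = 2c+1$ yields $\cosh(u) = \sqrt{c+1}$, hence $\frac{1}{2}{\rm arcosh}(2c+1) = {\rm arcosh}(\sqrt{c+1}) = \log\bigl(\sqrt{c+1} + \sqrt{c}\bigr)$. Substituting back $\sqrt{c} = |\cos(\pi/2+a)|$ and $\sqrt{c+1} = \sqrt{\cos^2(\pi/2+a)+1}$ produces the stated expression. I do not expect a genuine obstacle here: the argument is purely computational once the collapse $(c-1)^2+4c=(c+1)^2$ is noticed. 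The only places demanding a little care are the sign conventions --- justifying $\sqrt{(c+1)^2} = c+1$ from $c \ge 0$, and choosing the nonnegative branch $u \ge 0$ in the half-argument step so that the factor ${\rm sgn}(a)$ correctly carries the sign of the imaginary part.
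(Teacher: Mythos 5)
Your proposal is correct and follows essentially the same route as the paper: specialize Theorem~\ref{theorem winkelixy} at $b=0$, collapse the radicand $(c-1)^2+4c$ into the perfect square $(c+1)^2$ to get $\mathsf{H}_-=1$ and $\mathsf{H}_+=2\cos^2(\pi/2+a)+1$, and then convert the arcosh term to logarithmic form. The only cosmetic difference is in the last step, where you halve the arcosh via the identity $\cosh(2u)=2\cosh^2(u)-1$, while the paper verifies the equivalent logarithm identity $\log\left[\sqrt{\cos^2(x)+1}+|\cos(x)|\right]=\frac{1}{2}\cdot\log\left[2\cos^2(x)+1+2\cdot|\cos(x)|\cdot\sqrt{\cos^2(x)+1}\right]$ directly from the definition ${\rm arcosh}(x)=\log\left(x+\sqrt{x^2-1}\right)$.
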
  
   \begin{corollary} \label{corollary real part pidurchzwei angle}
        For an  angle $ \angle (\vec{x},\vec{y}) = \frac{\pi}{2} + {\bf i} \cdot b $ 
        $( \text{i.e.} \ a = 0 )$ \ we get a pure real angle $\angle({\bf i} \cdot \vec{x},\vec{y})$,
      \begin{align*}   
         \angle({\bf i} \cdot \vec{x},\vec{y}) \  
         & = \ \ \frac{\pi}{2} - \frac{1}{2} \cdot {\rm sgn}( b ) \cdot \arccos[ 3 - 2 \cdot \cosh^{2}(b)]  \\
         & = \ \ \frac{\pi}{2} - \frac{1}{2} \cdot {\rm sgn}( b ) \cdot \arccos[ 2 - \cosh(2 \cdot b)]  
         \ = \  \frac{\pi}{2} - \arcsin[ \sinh(b) ]  \  = \  \arccos[ \sinh(b) ]   \ .
      \end{align*}   
   \end{corollary}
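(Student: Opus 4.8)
The plan is to specialize the general formula of Theorem~\ref{theorem winkelixy} to the case $a = 0$ and then reduce the resulting real expression through a short chain of elementary identities. Setting $a = 0$ gives $\cos\left(\frac{\pi}{2} + a\right) = \cos\left(\frac{\pi}{2}\right) = 0$, and since $\mathrm{sgn}(a) = \mathrm{sgn}(0) = 0$ the imaginary contribution $\mathbf{i} \cdot \mathrm{sgn}(a) \cdot \mathrm{arcosh}(\mathsf{H}_+)$ drops out entirely. Hence $\angle(\mathbf{i}\cdot\vec{x},\vec{y})$ is automatically pure real, which is the qualitative claim of the corollary; it remains only to evaluate the real part.

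First I would substitute $\cos^2\left(\frac{\pi}{2}\right) = 0$ into the definition of $\mathsf{H}_-$. The expression under the square root collapses to $\left[\cosh^2(b) - 2\right]^2$, so $\mathsf{H}_- = \sqrt{[\cosh^2(b)-2]^2} - [\cosh^2(b)-1] = |\cosh^2(b) - 2| - \cosh^2(b) + 1$. To remove the absolute value I would invoke that $\angle(\vec{x},\vec{y}) \in \mathcal{A}$ forces its cosine into $\mathcal{SQ}$ by property (An~1); since $\cos\left(\frac{\pi}{2} + \mathbf{i} b\right) = -\mathbf{i}\sinh(b)$, this yields $|\sinh(b)| \leq 1$ and therefore $\cosh^2(b) = 1 + \sinh^2(b) \leq 2$. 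Consequently $|\cosh^2(b) - 2| = 2 - \cosh^2(b)$ and $\mathsf{H}_- = 3 - 2\cosh^2(b)$, establishing the first displayed line. The second line follows immediately from $\cosh(2b) = 2\cosh^2(b) - 1$.

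For the two simplified forms I would set $u := \sinh(b)$, so that $3 - 2\cosh^2(b) = 1 - 2u^2$ and $\mathrm{sgn}(b) = \mathrm{sgn}(u)$. The key identity is $\arccos(1 - 2u^2) = 2\arcsin(|u|)$ for $|u| \leq 1$, obtained by putting $\theta = \arcsin(|u|) \in [0, \pi/2]$ and using $\cos(2\theta) = 1 - 2\sin^2(\theta)$ together with $2\theta \in [0,\pi]$. Multiplying by $\tfrac{1}{2}\mathrm{sgn}(u)$ and using that $\arcsin$ is odd gives $\tfrac{1}{2}\mathrm{sgn}(b)\arccos[3 - 2\cosh^2(b)] = \arcsin[\sinh(b)]$, hence the third line $\angle(\mathbf{i}\cdot\vec{x},\vec{y}) = \frac{\pi}{2} - \arcsin[\sinh(b)]$. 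The final equality is then just the co-function relation $\frac{\pi}{2} - \arcsin(v) = \arccos(v)$ applied to $v = \sinh(b) \in [-1,1]$.

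The computation is routine once Theorem~\ref{theorem winkelixy} is available; the only genuinely delicate point is the sign bookkeeping around the absolute value in $\mathsf{H}_-$. I expect the main obstacle to be recognizing that the admissibility bound $|\sinh(b)| \leq 1$ is precisely what is needed both to drop the absolute value (placing us in the branch $\cosh^2(b) \leq 2$) and to legitimize the final $\arcsin$-to-$\arccos$ conversion; without this bound the quantity $3 - 2\cosh^2(b)$ could fall below $-1$ and the arccosine would cease to be defined.
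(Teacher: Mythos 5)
Your proposal is correct and follows essentially the same route as the paper: specialize Theorem \ref{theorem winkelixy} to $a=0$ (so the imaginary term vanishes and $\mathsf{H}_-$ collapses to $\sqrt{[\cosh^{2}(b)-2]^{2}}-[\cosh^{2}(b)-1]$), then establish $\cosh^{2}(b)\leq 2$ to resolve the absolute value and obtain $\mathsf{H}_-=3-2\cosh^{2}(b)$. The only differences are cosmetic: you derive the needed bound directly from $\cos\left(\frac{\pi}{2}+{\bf i}\cdot b\right)=-{\bf i}\cdot\sinh(b)$ and Lemma \ref{lemma eins} (giving $|\sinh(b)|\leq 1$), which is an equivalent but tidier version of the paper's Lemma \ref{range of b} on the range of $b$, and you also write out the double-angle identity $\arccos(1-2u^{2})=2\arcsin(|u|)$ behind the final equalities, which the paper dismisses as ``some calculus.''
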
   
    Now we look for pairs  $ (\vec{x}, \vec{y}) $ with a real valued angle $ \angle(\vec{x}, \vec{y}) $.   \    
        For a complex normed vector space $ (X, \| \cdot \|) $ we define the set of pairs \
    $$ {\cal R}_X^{\ {\bf \bullet}} := \{(\vec{x}, \vec{y}) \ | \  \vec{x}, \vec{y} \in X, \vec{x}, \vec{y} \neq
          \vec{0},  \ \text{and} \ \angle(\vec{x}, \vec{y})  \in {\mathbb R} \}  \  .     $$ 
     Let us take two vectors $ \vec{x}, \vec{y} \neq  \vec{0} $. We can prove that there is a real number 
     $ \varphi \in [ 0, 2 \pi ] $ such that \
     $ ( e^{{\bf i} \cdot \varphi} \cdot \vec{x} \,, \vec{y}) \in  {\cal R}_X^{\ {\bf \bullet}} $, i.e. the pair has 
     a pure real angle \ $ \angle( e^{{\bf i} \cdot \varphi} \cdot \vec{x} \, , \, \vec{y} )$. 
                                               
     This fact ensures the existance of  many real valued angles even in complex normed spaces. The situation 
     will improve further in the special case of complex vector spaces provided with an inner product
     $ < . \: | \: . > $.  We discuss this case in the next section.  \\
                                                                                
     The properties of complex inner product spaces  $( X , < . \: | \: . > ) $ have been studied extensively, and 
     such spaces have many applications in technology and physics. 
                                                                                
     To do ordinary Euclidean geometry we need real valued angles.
     The idea is simple. We take an orthogonal basis \ $ \mathsf{T} $ of  $( X , < . \: | \: . > ) $,
     and we generate $  {\cal L}(\mathbb R)(\mathsf{ T }) $, the set of all finite real linear combinations
     of elements of $ \mathsf{T} $.  Let 
   $$     {\cal L}(\mathbb R)(\mathsf{ T }) \  := \ 
       \left\{ \sum_{i=1}^{n} r_i \cdot \vec{x}_i \ | \ n \in \mathbb N, \ r_1, r_2, \ldots , r_n \in \mathbb R, \
       \vec{x}_1, \vec{x}_2 , \ldots , \vec{x}_n \in  \mathsf{ T } \right\} \ .     $$
     If $ X $ is a Hilbert space, i.e. it is complete, we even can use Cauchy sequences $($with real coefficients$)$ 
     from elements of $ \mathsf{T} $. It means that we can take the closure \
     $ \overline{{\cal L}(\mathbb R)(\mathsf{T})} $ \ of \ ${\cal L}(\mathbb R)(\mathsf{T})$.   This creates a 
     real linear subspace \ $ \overline{{\cal L}(\mathbb R)(\mathsf{T})} $ \ of $X$, in which   
     all angles are real.         
                                                                                                    
     After that we consider complex vector spaces $ X $ of finite complex dimension $n \in {\mathbb N}$. 
     Their real dimension is \ $ 2 \cdot n $, and we state in Proposition \eqref{endlich dimensionale vektorraeume}
     that the maximal dimension  of a real subspace of $ X $ with all-real angles is $ n $. 
     The real span $ {\cal L}(\mathbb R)(\mathsf{ T })$ of an  orthogonal basis $ \mathsf{ T } \subset X$ 
     yields an example.   
                                                                                         
     Finally we demonstrate two examples of ordinary Euclidean geometry in complex inner product spaces,
     and to do this we show that real angles are useful.  
                                                                                                                  
     Note that the idea of this `angle' are treated first for real normed spaces in \cite{Thuerey1} and
     \cite{Thuerey2}. 
  \section{General Definitions} 
	            \ \ \  \ \ \ \       
    Let    \ $ X $ =  ($ X , \tau$)    be an  arbitrary complex vector  space,  that  means  that 
        the  vector space   $ X $  is  provided with a topology  $ \tau $ such that  the addition  of two vectors
        and  the  multiplication  with complex numbers  are continuous.  Further let    $\| \cdot \|$   denote a 
        {\it norm} on $ X $, i.e. there is a continuous  map  
        $\| \cdot \|$: \ $ X \longrightarrow$  ${\mathbb R}^{+} \cup \{ 0 \}$  which fulfils the following axioms \
              $\|z \cdot  \vec{x}\|$ =   $|z|$ $ \cdot  \|\vec{x}\|$ \ (`absolute homogeneity'),   
              \ $\|\vec{x}+\vec{y}\| \leq  \|\vec{x}\|  + \|\vec{y}\| $ (`triangle \ inequality'), 
              and \ $\|\vec{x}\| = 0 $ only for $ \vec{x}  = \vec{0}$ (`positive definiteness'),
              \ for \  $\vec{x}, \vec{y} \in X$  and $ z \in {\mathbb C}$.  
 
        Let  $ < . \:  | \: . > \ : \  X^{2} \longrightarrow   {\mathbb C} $    be a continuous 
	      map  from  the product space  $ X \times X $   into the field   $ {\mathbb C} $. \ Such a map is called  a
	      {\it product}.  We notice the following four conditions.  \\
	 $\overline{(1)}$: \ \   For  all  $ \it z  \in  {\mathbb C}$   and   
                     $\vec{x}, \vec{y}   \in X $     it holds  \
                     $ < z \cdot \vec{x} \ | \ \vec{y}> \ = \ z \cdot <\vec{x} \ | \ \vec{y}> $    
                     \  $   { \ } $  {  $ \ $ }  $ { }$       \hfill     (`homogeneity'),   \\
   $\overline{(2)}$: \  \   for  all   $\vec{x} , \vec{y} \in X$   it holds \  
                     $ <\vec{x} \ | \ \vec{y}>  \  = \ \overline{<\vec{y} \ | \ \vec{x}>} $   \hfill \quad
                                                                 (`conjugate symmetry'), \\    
   $\overline{(3)}$: \ \  for \ $ \vec{x} \in X, \ \vec{x} \neq \vec{0} $ \ we have a real number \
                     $ <\vec{x} \ | \ \vec{x}> \ \ > \ 0 $, 
                     $ { } $  \hfill   (`positive definiteness'), \\     
   $\overline{(4)}$: \  \  for all \  $\vec{x} , \vec{y} ,  \vec{z} \in X$  it holds \  
                     $<\vec{x} \: | \: \vec{y}+\vec{z}> \: = \:  <\vec{x} \ | \ \vec{y}> + <\vec{x} \ | \ \vec{z}>$  \\
                     $ { }$  \hfill  (`linearity in the second component'). 
   $ \begin{array}[ ]{l}                                    
             \text{If} \   < . \: | \: . >   \  \text{fulfils all properties} \
             \overline{(1)},\overline{(2)},\overline{(3)}, \overline{(4)}, \ 
             \text{the map} \ < . \: | \: . >   \text {is  an {\it inner  product}  on  {\it X}}.    
      \end{array}    $   \\
  We call the pair \ $ (X , < . \: | \: . >)$ \    
            a  {\it complex inner product space}.       
        
   It is well known that in a complex normed space   $ ( X , \| \cdot \|) $ its norm   $ \| \cdot \| $ generates 
   an inner product \ $< . \: | \: . >$ \ by equation \eqref{allererste definition} if and only if it holds  the 
   following \ {\it parallelogram identity}:  \\     
   \centerline{ For   $\vec{x} , \vec{y} \in X$ \ we have  the equation \ 
   $\|\vec{x}+\vec{y}\|^{2} + \|\vec{x}-\vec{y}\|^{2} = 2 \cdot \left(\|\vec{x}\|^{2} + \|\vec{y}\|^{2} \right)$.}  \\ 

     Assume that the complex vector space \ $X$ \ is provided with a norm   $ \| \cdot \|$, and further there is  
           a  product  $ < . \: | \: . >: X \times X \rightarrow {\mathbb C}  $. 
           We say that the triple    $( X , \| \cdot \| ,  < . \: | \: . > ) $   
           satisfies the {\it Cauchy-Schwarz-Bunjakowsky Inequality} or {\sf CSB} inequality   if and only if \
           for  $ \vec{x}, \vec{y} \in X $   there is the inequality 
    $$                         |< \vec{x} \: | \: \vec{y} >| \ \leq \ \|\vec{x}\| \cdot \|\vec{y}\| \; .   $$ 
                                          
    In the following section we need some complex valued functions like the cosine, sine, arccosine, arcsine,
    et cetera. We abbreviate them by $ \cos, \, \sin, \, \arccos, \, \arcsin $.   \\
      
   In the introduction we already defined two subset   ${\cal B}$ and  ${\cal A}$ of the complex plane ${\mathbb C}$,
   and we asserted that there are two homeomorphisms 
   $ \cos: \  {\cal A} \stackrel{\cong}{\longrightarrow} {\cal B} \ \ \ \text{and} \ \ 
	         \arccos: \ {\cal B} \stackrel{\cong}{\longrightarrow} {\cal A} $. Note that ${\cal A}$ contains only
	 inner points except two boundary points  $0$ and $\pi$, while ${\cal B}$ has $1$ and $-1$.   
	                                                       
	 In the next definition we express the complex functions in detail by known real valued
	 $ \cos, \ \sin, \ \arccos $, arcosh, $\log $ and $ \exp $ functions.  
	                   
	 Recall the three values of the signum function,  $ {\rm sgn}(0) = 0 $, ${\rm sgn}(x) = 1 $ 
	 for positive real numbers $x$, and $ {\rm sgn}(x) = -1 $ for negative $x$.  We abbreviate the exponential
	 function by  $e^{x} := \exp(x)$. Note that in the next section we prove explicitly that the defined  arccosine
	 is truly the inverse function of the cosine.   
	  \begin{definition}   \label{die dritte definition}    \quad    
	       For a number \ $ z = a + {\bf i} \cdot b  \in {\mathbb C} $ \ its complex cosine and sine can be defined 
         explicitly by 
	  \begin{align*}   
          \cos\left(a + {\bf i} \cdot b \right) \ & := \  \frac{1}{2} \cdot 
          \left[ \cos\left(a\right) \cdot \left(e^{b} + \frac{1}{e^{b}} \right) - {\bf i} \cdot
          \sin\left(a\right) \cdot \left(e^{b} - \frac{1}{e^{b}} \right) \right] \ ,  \\
           \sin\left(a + {\bf i} \cdot b\right) \ & := \  \frac{1}{2} \cdot 
          \left[ \sin\left(a\right) \cdot \left(e^{b} + \frac{1}{e^{b}} \right) + {\bf i} \cdot
          \cos\left(a\right) \cdot \left(e^{b} - \frac{1}{e^{b}} \right) \right]  \; .  
    \end{align*}   
         For a shorter presentation we can use the real hyperbolic cosine and hyperbolic sine. Their abbreviations are 
         the symbols \ $\cosh$ \ and \ $\sinh$, they are defined by  
      $$ \cosh(x) :=  \frac{1}{2} \cdot  \left(e^{x} + \frac{1}{e^{x}} \right) \ \ \text{and} \ \
         \sinh(x) :=  \frac{1}{2} \cdot  \left(e^{x} - \frac{1}{e^{x}} \right) , \ \text{for} \ x \in {\mathbb R}. $$  
       For \ $ r + {\bf i} \cdot s \in {\cal B} $ \ the functions  arcsine and  arccosine are
    \begin{align*}  
         \arcsin(r + {\bf i} \cdot s) \ &  :=  \ \frac{1}{2} \cdot 
         \left[ \ {\rm sgn}(r) \cdot \arccos \left( {\mathsf{G}}_- \right) \ + \ {\bf i} \cdot {\rm sgn}(s)
                   \cdot {\rm arcosh} \left( {\mathsf{G}}_+  \right) \ \right] , \ \ \text{and}     \\
         \arccos(r + {\bf i} \cdot s) \ &  := \ \frac{\pi}{2} \ - \arcsin(r + {\bf i} \cdot s) . 
    \end{align*}              
       Here we use the abbreviations 
     $$ {\mathsf{G}}_- := \sqrt{(r^{2}+s^{2}-1)^{2} + 4 \cdot s^{2}} - \left( r^{2}+s^{2} \right) , \ \text{and} \   
        {\mathsf{G}}_+ := \sqrt{(r^{2}+s^{2}-1)^{2} + 4 \cdot s^{2}} + \left( r^{2}+s^{2} \right) ,   $$
        and recall that the symbol $ {\rm arcosh}$ means the real area hyperbolic cosine, which is the inverse 
	      of the real hyperbolic cosine, 
     $$ {\rm arcosh}(x) := \log \left( x + \sqrt{x^{2} - 1} \right) \ \text{for real numbers} \ x \geq 1 \, .   $$  
        $ { }  $  \hfill     $\Box$  
    \end{definition} 
        We mention a few well-known consequences.  \\
        We assume the real cosine and sine functions and the complex exp function, all defined by its power series. 
        Since it holds $  e^{ {\bf i} \cdot r} = \cos{(r)} + {\bf i} \cdot \sin{(r)} $ for real numbers $ r $,
        with the above Definition \eqref{die dritte definition} of the complex sine and cosine 
        we can deduce  Euler's formula \ $  e^{ {\bf i} \cdot z} = \cos{(z)} + {\bf i} \cdot \sin{(z)} $ 
        for all $ z \in {\mathbb C} $.  After that it is easy to prove the  identities  
        $$   \cos\left( z \right) = \frac{1}{2} \cdot \left[  e^{ {\bf i} \cdot z} + e^{ -{\bf i} \cdot z} \right] \ 
        \ \text{and} \ \   \sin\left( z \right)  =  
        \frac{1}{2 \cdot {\bf i}} \cdot \left[  e^{ {\bf i} \cdot z} - e^{ -{\bf i} \cdot z} \right] \; .  $$  
        We notice the equations $ {\rm arcosh} \circ \cosh(x) = x$  for real $ x \geq 0 $, and 
        $ \cosh \circ {\rm arcosh}(x) = x$  for $ x \geq 1 $. Further, the complex cosine can be written as \ 
        $ \cos\left(a + {\bf i} \cdot b \right) = \cos(a) \cdot \cosh(b) - 
                           {\bf i} \cdot \sin(a) \cdot \sinh(b)$,  \  while the complex sine function is \ 
        $\sin\left(a + {\bf i} \cdot b \right) = \sin(a) \cdot \cosh(b) + {\bf i} \cdot \cos(a) \cdot \sinh(b) $. 
	%
  %
   \section{Complex Normed Spaces}    \label{section three}
     First we prove that the cosine and  arccosine as they are written in Definition \eqref{die dritte definition} 
     are actually inverse functions. 
     \begin{lemma}    \label{lemma cosine circ  arccosine}
          For all $ z \in {\cal B} $ it holds \ $ \cos \circ \arccos (z) = z $.  
     \end{lemma}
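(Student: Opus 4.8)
The plan is to unfold all the explicit definitions from Definition~\ref{die dritte definition} and reduce the claimed identity to a pair of purely real algebraic identities. Write $z = r + {\bf i} \cdot s \in {\cal B}$ and abbreviate $W := r^{2} + s^{2}$ and $D := \sqrt{(W-1)^{2} + 4 \cdot s^{2}}$, so that ${\mathsf{G}}_{-} = D - W$ and ${\mathsf{G}}_{+} = D + W$. Setting $P := \arccos({\mathsf{G}}_{-})$ and $Q := {\rm arcosh}({\mathsf{G}}_{+})$, the definition gives $\arccos(z) = \alpha + {\bf i} \cdot \beta$ with $\alpha = \frac{\pi}{2} - \frac{1}{2} \cdot {\rm sgn}(r) \cdot P$ and $\beta = - \frac{1}{2} \cdot {\rm sgn}(s) \cdot Q$. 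I would then apply the real decomposition $\cos(\alpha + {\bf i} \cdot \beta) = \cos(\alpha) \cdot \cosh(\beta) - {\bf i} \cdot \sin(\alpha) \cdot \sinh(\beta)$ recorded just after Definition~\ref{die dritte definition}, and try to match its real part to $r$ and its imaginary part to $s$.

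First I would check that $P$ and $Q$ are well-defined real numbers, i.e. that ${\mathsf{G}}_{-} \in [-1, 1]$ and ${\mathsf{G}}_{+} \geq 1$. These are elementary inequalities in $W$ and $D$: from $(1+W)^{2} - D^{2} = 4 \cdot r^{2} \geq 0$ one gets $D \leq W + 1$, hence ${\mathsf{G}}_{-} \leq 1$, while $D \geq |W - 1|$ yields both ${\mathsf{G}}_{-} \geq -1$ and ${\mathsf{G}}_{+} \geq 1$. Next, using the half-angle relations $\sin^{2}(\frac{1}{2} P) = \frac{1 - {\mathsf{G}}_{-}}{2}$, $\cos^{2}(\frac{1}{2} P) = \frac{1 + {\mathsf{G}}_{-}}{2}$, $\cosh^{2}(\frac{1}{2} Q) = \frac{{\mathsf{G}}_{+} + 1}{2}$ and $\sinh^{2}(\frac{1}{2} Q) = \frac{{\mathsf{G}}_{+} - 1}{2}$, I would reduce everything to the two product identities $(1 - {\mathsf{G}}_{-}) \cdot (1 + {\mathsf{G}}_{+}) = (1+W)^{2} - D^{2} = 4 \cdot r^{2}$ and $(1 + {\mathsf{G}}_{-}) \cdot ({\mathsf{G}}_{+} - 1) = D^{2} - (W-1)^{2} = 4 \cdot s^{2}$. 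These are the algebraic heart of the lemma and follow at once by expanding $D^{2} = (W-1)^{2} + 4 \cdot s^{2}$; they give $\sin^{2}(\frac{1}{2} P) \cdot \cosh^{2}(\frac{1}{2} Q) = r^{2}$ and $\cos^{2}(\frac{1}{2} P) \cdot \sinh^{2}(\frac{1}{2} Q) = s^{2}$.

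It then remains to extract the correct signs. Since $P \in [0, \pi]$ and $Q \geq 0$, the four factors $\sin(\frac{1}{2} P)$, $\cos(\frac{1}{2} P)$, $\cosh(\frac{1}{2} Q)$, $\sinh(\frac{1}{2} Q)$ are all nonnegative, so taking nonnegative square roots gives $\sin(\frac{1}{2} P) \cdot \cosh(\frac{1}{2} Q) = |r|$ and $\cos(\frac{1}{2} P) \cdot \sinh(\frac{1}{2} Q) = |s|$. When $r \neq 0$ and $s \neq 0$ the parities of $\sin, \cos, \cosh, \sinh$ let me rewrite $\cos(\alpha) = {\rm sgn}(r) \cdot \sin(\frac{1}{2} P)$, $\sin(\alpha) = \cos(\frac{1}{2} P)$, $\cosh(\beta) = \cosh(\frac{1}{2} Q)$ and $\sinh(\beta) = - {\rm sgn}(s) \cdot \sinh(\frac{1}{2} Q)$, so that the real part equals ${\rm sgn}(r) \cdot |r| = r$ and the imaginary part equals ${\rm sgn}(s) \cdot |s| = s$, as required.

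The \textbf{main obstacle} I expect is the degenerate boundary behaviour, where the sign-bookkeeping above collapses. When $s = 0$ the factor ${\rm sgn}(s)$ forces $\beta = 0$, so $\cosh(\beta) = 1$ rather than $\cosh(\frac{1}{2} Q)$, and symmetrically $r = 0$ forces $\alpha = \frac{\pi}{2}$; in these cases the clean substitutions no longer hold and the two axes must be treated separately. On the imaginary axis $r = 0$ one computes $D = s^{2} + 1$ directly, so ${\mathsf{G}}_{+} = 2 \cdot s^{2} + 1$ and the imaginary part comes out as ${\rm sgn}(s) \cdot \sinh(\frac{1}{2} Q) = s$. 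On the real segment $s = 0$, $|r| \leq 1$ one uses $D = 1 - r^{2}$ together with the real identity $\arccos(1 - 2 \cdot t^{2}) = 2 \cdot \arcsin(|t|)$, valid precisely for $|t| \leq 1$, so that $\arccos$ reduces to the ordinary real arccosine. This last step is exactly where the hypothesis $z \in {\cal B}$ is indispensable: for a real $r$ with $|r| > 1$ one instead obtains ${\mathsf{G}}_{-} = -1$ and $\arccos(z) = 0$, whence $\cos \circ \arccos(z) = 1 \neq z$, showing that the identity genuinely fails outside ${\cal B}$.
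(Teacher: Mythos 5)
Your proposal is correct and takes essentially the same route as the paper's proof: both unfold Definition~\eqref{die dritte definition}, apply the decomposition $\cos(a + {\bf i}\cdot b) = \cos(a)\cdot\cosh(b) - {\bf i}\cdot\sin(a)\cdot\sinh(b)$, and reduce via the half-angle relations to the two product identities $(1-{\mathsf{G}}_-)\cdot({\mathsf{G}}_+ +1) = 4\cdot r^{2}$ and $(1+{\mathsf{G}}_-)\cdot({\mathsf{G}}_+ -1) = 4\cdot s^{2}$. The only difference is one of thoroughness, not of method: the paper computes the case $r,s>0$ in detail and dismisses the remaining sign patterns as working ``in the same manner,'' whereas you carry out the sign bookkeeping uniformly, verify that ${\mathsf{G}}_-\in[-1,1]$ and ${\mathsf{G}}_+\geq 1$ so that $P$ and $Q$ are well defined, and treat the degenerate axes $r=0$ and $s=0$ explicitly.
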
 
     \begin{proof}
          We take an arbitrary element \ $ z = r + {\bf i} \cdot s \in {\cal B} $. We use the abbreviations 
          $ {\mathsf{G}}_- $ and $ {\mathsf{G}}_+ $ of Definition \eqref{die dritte definition}, and with easy
          calculations we get 
      $$   (1 - {\mathsf{G}}_-) \cdot ({\mathsf{G}}_+ + 1) = 4 \cdot r^{2} \, , \ \ \text{and} \ \   
           (1 + {\mathsf{G}}_-) \cdot ({\mathsf{G}}_+ - 1) = 4 \cdot s^{2} \, .                     $$ 
        We assume for  $ z = r + {\bf i} \cdot s \in {\cal B} $ \ that both $ r $ and $ s $ are positive. 
        We use Definition \eqref{die dritte definition} and elementary calculus, and we compute 
        \begin{align*} 
             & \cos \circ \arccos \, (z) \ = \ \cos \ [ \arccos (r + {\bf i} \cdot s ) ]  
             \ = \ \cos\left[ \ \frac{\pi}{2} \ - \ \frac{1}{2} \cdot 
                   \arccos \left( {\mathsf{G}}_- \right) \ - \ {\bf i} \cdot \frac{1}{2} \cdot 
                   {\rm arcosh} \left( {\mathsf{G}}_+  \right) \ \right]  \\
             & = \ \cos\left( \frac{\pi}{2} \ - \ \frac{1}{2} \cdot \arccos ( {\mathsf{G}}_- )  \right)
                   \cdot \cosh \left( - \, \frac{1}{2} \cdot {\rm arcosh} ({\mathsf{G}}_+) \right) \\
             & \quad \     \ - \ {\bf i} \cdot \   
                   \sin\left( \frac{\pi}{2} \ - \ \frac{1}{2} \cdot \arccos ( {\mathsf{G}}_- )  \right) 
                   \cdot \sinh \left( - \, \frac{1}{2} \cdot {\rm arcosh} ({\mathsf{G}}_+) \right)  
                   \qquad (\text{note} \ \sinh(-x) = -\sinh(x))       \\
             & = \ \sin\left( \frac{1}{2} \cdot \arccos ( {\mathsf{G}}_- )  \right)
                   \cdot \cosh \left( \frac{1}{2} \cdot {\rm arcosh} ({\mathsf{G}}_+) \right) 
                    + \ {\bf i} \cdot \   
                   \cos\left( \frac{1}{2} \cdot \arccos ( {\mathsf{G}}_- )  \right) 
                   \cdot \sinh \left( \frac{1}{2} \cdot {\rm arcosh} ({\mathsf{G}}_+) \right)  \\      
             & = \ \sqrt{\frac{1}{2} \cdot (1 - {\mathsf{G}}_-)}  \cdot
                   \sqrt{\frac{1}{2} \cdot ({\mathsf{G}}_+ + 1)} \ + \ {\bf i} \cdot \   
                   \sqrt{\frac{1}{2} \cdot (1 + {\mathsf{G}}_-)}  \cdot
                   \sqrt{\frac{1}{2} \cdot ({\mathsf{G}}_+ - 1)}  \\ 
             & = \ \frac{1}{2} \cdot  \sqrt{(1 - {\mathsf{G}}_-) \cdot ({\mathsf{G}}_+ + 1)} \ + \ {\bf i} \cdot \                        \frac{1}{2} \cdot  \sqrt{(1 + {\mathsf{G}}_-) \cdot ({\mathsf{G}}_+ - 1)}
                   \ = \  r + \ {\bf i} \cdot s \ = \ z .          
        \end{align*}   
        The other three cases are \ $ r \cdot s = 0 $, \ $ r \cdot s < 0 $, and both $ r $ and $ s $ are negative.
        By noting the signs they work in the same manner, and the lemma is established.      
     \end{proof} 
     \begin{proposition}   \it      \label{proposition zwei}        
          We have two homeomorphisms 
	        \begin{align*}
	              & \cos: \ {\cal A} \stackrel{\cong}{\longrightarrow} {\cal B} \ \ \ \text{and} \ \ 
	              \arccos: \ {\cal B} \stackrel{\cong}{\longrightarrow} {\cal A} \, ,  \ \
	              \text{and we get two identities}  \\  
	              & \cos \circ \arccos = id_{{\cal B}} \ \ \ \text{and} \ \ \arccos \circ \cos = id_{{\cal A}} \, .
	        \end{align*}  
	  \end{proposition}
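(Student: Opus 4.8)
The plan is to lean on Lemma \ref{lemma cosine circ  arccosine}, which already delivers one of the two identities, $\cos \circ \arccos = id_{\cal B}$, and to reduce the proposition to three remaining tasks: checking that the two maps actually land in the asserted target sets, establishing the reverse identity $\arccos \circ \cos = id_{\cal A}$, and verifying continuity of both maps. Once $\cos$ and $\arccos$ are continuous and mutually inverse, each is automatically a homeomorphism, and the stated identities are exactly the two compositions.

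First I would confirm the codomains. For the cosine I would use the formula $\cos(a + {\bf i}\cdot b) = \cos(a)\cosh(b) - {\bf i}\cdot\sin(a)\sinh(b)$ recorded after Definition \ref{die dritte definition}: for $z = a + {\bf i}\cdot b \in {\cal A}$ the imaginary part $-\sin(a)\sinh(b)$ vanishes only when $b = 0$, in which case $\cos(z)=\cos(a)\in[-1,+1]\subset{\cal B}$, while otherwise $\cos(z)\notin{\mathbb R}$ and hence $\cos(z)\in{\cal B}$; thus $\cos({\cal A})\subseteq{\cal B}$. For the arccosine I would read off from Definition \ref{die dritte definition} that the real part of $\arccos(r+{\bf i}\cdot s)$ equals $\frac{\pi}{2}-\frac{1}{2}\,{\rm sgn}(r)\arccos({\mathsf{G}}_-)\in[0,\pi]$. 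A short computation (writing $u:=r^2+s^2$, so that $(u-1)^2+4s^2=(u-1)^2$ exactly when $s=0$) shows that ${\mathsf{G}}_-=-1$ forces $s=0$ and $|r|\ge 1$; combined with the sign of $r$ and with $-1\le r\le 1$ on ${\cal B}$, the boundary values $0$ and $\pi$ of the real part are attained only at $z=1$ and $z=-1$, and there the imaginary part also vanishes since ${\rm sgn}(0)=0$. Everywhere else the real part lies strictly in $(0,\pi)$, so $\arccos({\cal B})\subseteq{\cal A}$.

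Next I would prove that $\cos$ is injective on ${\cal A}$ and deduce the reverse identity. Using the difference formula $\cos z_1-\cos z_2=-2\sin\!\frac{z_1+z_2}{2}\sin\!\frac{z_1-z_2}{2}$, which follows from the exponential expressions $\cos z=\tfrac12(e^{{\bf i}z}+e^{-{\bf i}z})$ and $\sin z=\tfrac1{2{\bf i}}(e^{{\bf i}z}-e^{-{\bf i}z})$ recorded after Definition \ref{die dritte definition}, and using that the complex sine vanishes exactly at the real multiples of $\pi$ (from $e^{2{\bf i}z}=1$), I get that $\cos z_1=\cos z_2$ forces $\frac{z_1-z_2}{2}=k\pi$ or $\frac{z_1+z_2}{2}=k\pi$. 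Since both points have real part in $[0,\pi]$, the difference of real parts lies in $[-\pi,\pi]$ and the sum in $[0,2\pi]$; in each case the admissible $k$ together with the fact that $0$ and $\pi$ are the only ${\cal A}$-points on the lines ${\rm Re}=0$ and ${\rm Re}=\pi$ forces $z_1=z_2$. With injectivity in hand the reverse identity is immediate: for $z\in{\cal A}$ both $z$ and $\arccos(\cos z)$ lie in ${\cal A}$ and, by Lemma \ref{lemma cosine circ  arccosine}, have the same cosine, hence coincide, so $\arccos\circ\cos=id_{\cal A}$.

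Finally I would settle continuity, which I expect to be the main obstacle. The map $\cos$ is continuous as an entire function. For $\arccos$ the delicate point is that its defining formula contains the discontinuous factors ${\rm sgn}(r)$ and ${\rm sgn}(s)$. These are harmless because each is multiplied by a quantity that vanishes precisely on the corresponding sign-change locus: as $r\to0$ one finds ${\mathsf{G}}_-\to\sqrt{(s^2+1)^2}-s^2=1$, so $\arccos({\mathsf{G}}_-)\to0$, and as $s\to0$ within ${\cal B}$ (so $|r|\le1$) one finds ${\mathsf{G}}_+\to(1-r^2)+r^2=1$, so ${\rm arcosh}({\mathsf{G}}_+)\to0$. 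Hence the products ${\rm sgn}(r)\arccos({\mathsf{G}}_-)$ and ${\rm sgn}(s)\,{\rm arcosh}({\mathsf{G}}_+)$ extend continuously across $r=0$ and $s=0$, and $\arccos$ is continuous on all of ${\cal B}$. Since $\cos$ and $\arccos$ are continuous and mutually inverse, each is a homeomorphism and both composition identities hold, which completes the proof.
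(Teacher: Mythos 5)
Your proof is correct and takes essentially the same route as the paper's: both rest on the lemma $\cos \circ \arccos = id_{{\cal B}}$, prove that $\cos$ is injective on ${\cal A}$, deduce from this that $\arccos$ is the two-sided inverse of $\cos$, and conclude that both maps are homeomorphisms because they are continuous and mutually inverse. The only difference is one of rigor: where the paper asserts continuity ``by Definition'' and settles injectivity by ``considering all possible cases for $a$ and $b$'', you supply actual arguments --- the product formula $\cos z_1 - \cos z_2 = -2\sin\frac{z_1+z_2}{2}\sin\frac{z_1-z_2}{2}$ for injectivity, the explicit codomain checks, and the observation that $\arccos({\mathsf G}_-)$ and ${\rm arcosh}({\mathsf G}_+)$ vanish on the sign-change loci, so the ${\rm sgn}$ factors do not destroy continuity of $\arccos$ --- thereby closing gaps the paper leaves implicit.
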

	  \begin{proof}   
	        By Definition  \eqref{die dritte definition}  we have two continuous maps, \
	        $ \cos: \ {\cal A} \rightarrow {\cal B}$ \ and \ $ \arccos: \ {\cal B} \rightarrow {\cal A}$.   
	        We just proved \ $ \cos \circ \arccos (z) = z $ for all $ z \in {\cal B} $.  
	        This shows that the  arccosine function is injective on its domain ${\cal B}$, and the cosine function 
	        is surjective on its codomain ${\cal B}$. If we consider all possible cases for $ a $ and $ b $, for  
	        $  a + {\bf i} \cdot b  \in  {\cal A} $, we see that the cosine is injective on the domain ${\cal A}$. 
	        It follows that the cosine function is a bijective map 
	        ${\cal A} \rightarrow {\cal B}$.  We have $ \cos \circ \arccos = id_{\cal B} $. Therefore it holds \    
	        $ \arccos = \cos^{-1} \circ \ id_{\cal B} = \cos^{-1} $, and we get that the  arccosine is also a bijective
	        map, and it is the inverse of the cosine.   
	        Because both functions are continuous, they are both homeomorphisms.
	  \end{proof}       
    We describe that the cosine and  arccosine functions         
        have a `central symmetrical behavior', they respect the `center points' \ 
        $ \pi/2 $ of ${\cal A}$ \ and \ $ 0 $ of ${\cal B}$, respectively. 
    Note that for each complex number $ z $ we can write $ z = \frac{\pi}{2} + a + {\bf i} \cdot b $, 
	      with a suitable real number $  a $. It means that the real part of $ z $ is \ $ \frac{\pi}{2} + a $. 
    \begin{proposition}  \label{proposition eins} 
         For each complex number $ z $ in ${\cal A}$ we write $ z = \frac{\pi}{2} + a + {\bf i} \cdot b $, 
         with a suitable real number $ - \frac{\pi}{2} \leq a \leq  \frac{\pi}{2}$. \ If \ 
         $ \cos\left(\frac{\pi}{2} + a + {\bf i} \cdot b\right) = r + {\bf i} \cdot s$ \ it follows 
         $$ \cos\left(\frac{\pi}{2} - a - {\bf i} \cdot b\right) \ = \ -r - {\bf i} \cdot s \ .  $$   
         Correspondingly,  for a number $r + {\bf i} \cdot s \in {\cal B}$ \ with \
         $ \arccos\left( r + {\bf i} \cdot s \right) = \frac{\pi}{2} + a + {\bf i} \cdot b$ \  it holds 
         $$ \arccos\left( - r - {\bf i} \cdot s \right) \ = \ \frac{\pi}{2} - a - {\bf i} \cdot b \ .  $$   
    \end{proposition}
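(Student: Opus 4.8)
The plan is to reduce both assertions to the single complex identity $\cos(\pi - w) = -\cos(w)$, valid for every $w \in {\mathbb C}$, and then to transport the statement for the cosine over to the arccosine by means of the bijection supplied by Proposition \ref{proposition zwei}.

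First I would observe that, writing $w := \frac{\pi}{2} + a + {\bf i} \cdot b$, the reflected argument is precisely $\frac{\pi}{2} - a - {\bf i} \cdot b = \pi - w$. Hence the claimed equation $\cos\!\left(\frac{\pi}{2} - a - {\bf i} \cdot b\right) = -r - {\bf i} \cdot s$ is nothing but $\cos(\pi - w) = -\cos(w)$. To prove the latter I would invoke the exponential representation $\cos(z) = \frac{1}{2}\left[e^{{\bf i} \cdot z} + e^{-{\bf i} \cdot z}\right]$ recorded after Definition \eqref{die dritte definition}: since $e^{\pm {\bf i} \cdot \pi} = -1$, one computes $\cos(\pi - w) = \frac{1}{2}\left[e^{{\bf i} \cdot \pi} e^{-{\bf i} \cdot w} + e^{-{\bf i} \cdot \pi} e^{{\bf i} \cdot w}\right] = -\frac{1}{2}\left[e^{{\bf i} \cdot w} + e^{-{\bf i} \cdot w}\right] = -\cos(w)$. (Alternatively the same equality drops out of the explicit formula $\cos(a + {\bf i} \cdot b) = \cos(a)\cosh(b) - {\bf i} \cdot \sin(a)\sinh(b)$ together with $\cos\!\left(\frac{\pi}{2} \pm a\right) = \mp\sin(a)$, $\sin\!\left(\frac{\pi}{2} \pm a\right) = \cos(a)$ and the parities of $\cosh$ and $\sinh$.) This settles the first statement.

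For the second statement I would argue by applying the arccosine, which by Proposition \ref{proposition zwei} is a genuine two-sided inverse of the cosine on ${\cal B} \to {\cal A}$. Given $\arccos(r + {\bf i} \cdot s) = \frac{\pi}{2} + a + {\bf i} \cdot b = w$, Proposition \ref{proposition zwei} yields $\cos(w) = r + {\bf i} \cdot s$, so the first part gives $\cos\!\left(\frac{\pi}{2} - a - {\bf i} \cdot b\right) = -r - {\bf i} \cdot s$. It then suffices to confirm that $\frac{\pi}{2} - a - {\bf i} \cdot b$ lies in ${\cal A}$ and that $-r - {\bf i} \cdot s$ lies in ${\cal B}$; once both memberships hold, the fact that $\arccos$ inverts $\cos$ on ${\cal A}$ forces $\arccos(-r - {\bf i} \cdot s) = \frac{\pi}{2} - a - {\bf i} \cdot b$, as required.

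The routine but essential bookkeeping — and the only place where care is needed — is verifying these two membership conditions. For ${\cal A}$ I would note that $w \in {\cal A}$ means $0 < \frac{\pi}{2} + a < \pi$ (equivalently $-\frac{\pi}{2} < a < \frac{\pi}{2}$), so that $0 < \frac{\pi}{2} - a < \pi$ while the imaginary part $-b$ remains real, whence $\frac{\pi}{2} - a - {\bf i} \cdot b \in {\cal A}$; the two boundary cases $a = \pm\frac{\pi}{2}$ (where necessarily $b = 0$) map to the admitted corners $0$ and $\pi$ and are recorded separately. For ${\cal B}$ I would use that ${\cal B}$ is symmetric under $z \mapsto -z$: if $s \neq 0$ then $-r - {\bf i} \cdot s$ is non-real and hence in ${\cal B}$, while if $s = 0$ then $r \in [-1, 1]$ forces $-r \in [-1, 1]$, so again $-r \in {\cal B}$. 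With these sign-and-range checks in place the proposition follows at once, and I expect no genuine difficulty beyond this elementary case analysis.
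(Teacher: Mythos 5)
Your proof is correct, but it runs in the mirror-image direction to the paper's. The paper proves the \emph{arccosine} statement first, reading it directly off Definition \eqref{die dritte definition}: under $(r,s) \mapsto (-r,-s)$ the quantities ${\mathsf{G}}_\pm$ are unchanged (they depend only on $r^{2}+s^{2}$ and $s^{2}$) while the factors ${\rm sgn}(r)$ and ${\rm sgn}(s)$ flip sign, so $\arcsin$ is odd and $\arccos(-r-{\bf i}\cdot s) = \pi - \arccos(r+{\bf i}\cdot s)$; the cosine statement is then obtained by inversion via Proposition \eqref{proposition zwei}. You do the reverse: you prove the \emph{cosine} reflection identity $\cos(\pi - w) = -\cos(w)$ directly from the exponential representation recorded after Definition \eqref{die dritte definition}, and then transfer it to the arccosine by inversion. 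Both routes are valid and both lean on Proposition \eqref{proposition zwei}, just in opposite directions. Your direction buys a cleaner computational core --- the identity $\cos(\pi-w)=-\cos(w)$ needs no case analysis on signs of $r$ and $s$ --- but it is precisely the direction that forces the domain bookkeeping you carry out: one must verify that $w \in {\cal A}$ implies $\pi - w \in {\cal A}$ (including the corner cases $0$ and $\pi$) before $\arccos \circ \cos = id_{{\cal A}}$ may be applied, and you do this correctly. One small economy: your separate check that $-r-{\bf i}\cdot s \in {\cal B}$ is redundant, since $-r-{\bf i}\cdot s = \cos(\pi - w)$ with $\pi - w \in {\cal A}$, and $\cos$ maps ${\cal A}$ into ${\cal B}$ by Proposition \eqref{proposition zwei}. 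The paper's direction avoids all such bookkeeping because the arccosine formula is evaluated on its given domain; the price is that its sign-flip argument is terser and rests on the observation that the real arccosine and area hyperbolic cosine have non-negative image, so that all sign information sits in the signum factors.
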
  
     \begin{proof}
          For the  arccosine we see this immediately from Definition \eqref{die dritte definition}.
          Note that both the real  arccosine and the real area hyperbolic cosine have a non-negative image.  
          Since the cosine is the inverse function 
          of the  arccosine, it must act as it is claimed in the proposition.
     \end{proof}      
       Let $ ( X, \| \cdot \| ) $  be a complex normed vector space. In Definition \eqref{die erste definition}
	     we defined a continuous product \   $ < . \: | \: . > $ \ on $ X $. In the introduction we already mentioned 
	     that this is the inner product in the case that the norm  $ \| \cdot \| $ generates an inner product by  
	     equation \eqref{allererste definition}. Also we noticed some properties of this product. We discuss them now. 
	     \begin{proposition}  \label{proposition drei} 
	      For all vectors \ $\vec{x}, \vec{y} \in (X, \| \cdot \|) $ and for real numbers \ $ r $ \ 
	      the product \   $ < . \: | \: . > $ \ has the following properties  \\
	      { \rm(a)} \ $ < \vec{x} | \vec{y}>  \ = \ \overline{<\vec{y} | \vec{x}>} $  \hfill (conjugate symmetry), \\
	      { \rm(b)} \ $ <\vec{x} \ | \ \vec{x}> \ \ \geq \ 0 $, \ and  \  
                     $  <\vec{x} \ | \ \vec{x}> \  =  0  $ \ {only for} \  $ \vec{x} = \vec{0}$ 
	                                                            \hfill   (positive definiteness), \\
	      { \rm(c)} \ $  < r \cdot \vec{x} \, | \, \vec{y} > \ = \ r \cdot <  \vec{x} \, | \, \vec{y} > $ 
	                                                        \hfill  (homogeneity for real numbers), \\
	      { \rm(d)} \ \ $  < r \cdot {\bf i} \cdot \vec{x} \, | \, \vec{y} > \ = \
	                        r \cdot {\bf i} \cdot <  \vec{x} \, | \, \vec{y} > $   
	                                 \hfill   (homogeneity for pure imaginary numbers),  \\
	      { \rm(e)}\ \ $ \| \vec{x} \| = \sqrt{ < \vec{x} | \vec{x}> }  $
	                                             \hfill   (the norm can be expressed by the product).      
       \end{proposition}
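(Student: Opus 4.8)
The plan is to reduce all five assertions to the behaviour of the two real expressions hidden inside Definition \eqref{die erste definition}. For nonzero $\vec{x},\vec{y}$ set $\vec{u}:=\vec{x}/\|\vec{x}\|$ and $\vec{v}:=\vec{y}/\|\vec{y}\|$, so that $<\vec{x}\,|\,\vec{y}> \ = \ \|\vec{x}\|\cdot\|\vec{y}\|\cdot\tfrac14\,(P+\mathbf{i}\cdot Q)$, where $P:=\|\vec{u}+\vec{v}\|^{2}-\|\vec{u}-\vec{v}\|^{2}$ and $Q:=\|\vec{u}+\mathbf{i}\cdot\vec{v}\|^{2}-\|\vec{u}-\mathbf{i}\cdot\vec{v}\|^{2}$. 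Every case in which a vector or the scalar is $\vec{0}$ (resp. $0$) is immediate from the convention $<\vec{x}\,|\,\vec{y}>:=0$, so only the nonzero case needs work. The single tool I will use throughout is absolute homogeneity of the norm, $\|z\cdot\vec{w}\|=|z|\cdot\|\vec{w}\|$; in particular $|-1|=|\mathbf{i}|=1$, together with the rewritings $\mathbf{i}\cdot(\vec{u}\mp\mathbf{i}\cdot\vec{v})=\mathbf{i}\cdot\vec{u}\pm\vec{v}$.

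First I would record how $P$ and $Q$ react to three elementary substitutions, each a mechanical consequence of absolute homogeneity. Swapping $\vec{u}\leftrightarrow\vec{v}$ fixes $P$ (as $\|\pm(\vec{u}-\vec{v})\|$ is unchanged) and sends $Q\mapsto-Q$ (rewrite $\|\vec{v}\pm\mathbf{i}\cdot\vec{u}\|=\|\vec{u}\mp\mathbf{i}\cdot\vec{v}\|$ by factoring out $\pm\mathbf{i}$). Replacing $\vec{u}$ by $-\vec{u}$ sends $P\mapsto-P$ and $Q\mapsto-Q$. Replacing $\vec{u}$ by $\mathbf{i}\cdot\vec{u}$ gives the crucial pair $P(\mathbf{i}\cdot\vec{u},\vec{v})=-Q$ and $Q(\mathbf{i}\cdot\vec{u},\vec{v})=P$, again via the factoring identities. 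These three rules carry all the content of the proposition.

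With the rules in hand the five claims follow by inspection. For (a), the swap rule turns $\tfrac14(P+\mathbf{i}\cdot Q)$ into $\tfrac14(P-\mathbf{i}\cdot Q)$, the complex conjugate, while the prefactor $\|\vec{x}\|\cdot\|\vec{y}\|$ is symmetric; hence $<\vec{y}\,|\,\vec{x}>=\overline{<\vec{x}\,|\,\vec{y}>}$. For (c) with $r>0$ the unit vectors are unchanged and the prefactor scales by $r$; for $r<0$ the unit vector $\vec{u}$ flips sign, the $-\vec{u}$ rule multiplies the bracket by $-1$, and the prefactor supplies $|r|$, so the two signs combine to $r$; the case $r=0$ is the zero convention. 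For (d) I would first establish $<\mathbf{i}\cdot\vec{x}\,|\,\vec{y}>=\mathbf{i}\cdot<\vec{x}\,|\,\vec{y}>$: the $\mathbf{i}\cdot\vec{u}$ rule turns $P+\mathbf{i}\cdot Q$ into $-Q+\mathbf{i}\cdot P=\mathbf{i}\cdot(P+\mathbf{i}\cdot Q)$, with the prefactor unchanged since $\|\mathbf{i}\cdot\vec{x}\|=\|\vec{x}\|$; then (c) applied to the vector $\mathbf{i}\cdot\vec{x}$ gives $<r\cdot\mathbf{i}\cdot\vec{x}\,|\,\vec{y}>=r\cdot\mathbf{i}\cdot<\vec{x}\,|\,\vec{y}>$. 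Finally (b) and (e) come from evaluating at $\vec{u}=\vec{v}$: there $P=\|2\cdot\vec{u}\|^{2}=4$ and $Q=\|(1+\mathbf{i})\cdot\vec{u}\|^{2}-\|(1-\mathbf{i})\cdot\vec{u}\|^{2}=2-2=0$, so $<\vec{x}\,|\,\vec{x}>=\|\vec{x}\|^{2}\geq 0$, vanishing exactly when $\vec{x}=\vec{0}$, whence $\sqrt{<\vec{x}\,|\,\vec{x}>}=\|\vec{x}\|$.

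I expect no genuine obstacle, since every step is forced by absolute homogeneity; the only place demanding care is the sign bookkeeping in (a) and (d), where the imaginary unit occurring inside the definition (multiplying $\vec{v}$) must be kept strictly distinct from the external factor $\mathbf{i}$ (multiplying $\vec{x}$). Confusing these two roles is the single way the argument can go astray, so I would write out the factoring identities for $\|\vec{v}\pm\mathbf{i}\cdot\vec{u}\|$ and $\|\mathbf{i}\cdot\vec{u}\pm\vec{v}\|$ explicitly before reading off the conclusions.
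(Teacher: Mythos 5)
Your proof is correct and takes essentially the same route the paper intends: the paper's own proof simply declares each item trivial or easy, reducing (c) to the positive case plus the identity $< -\vec{x} \, | \, \vec{y} > \ = \ - < \vec{x} \, | \, \vec{y} >$ and calling (d) ``similar to (c)'', and your three substitution rules for $P$ and $Q$ (swap, sign flip, multiplication by $\mathbf{i}$) are exactly those omitted computations written out in full. The factoring identities $\mathbf{i}\cdot(\vec{u}\mp\mathbf{i}\cdot\vec{v})=\mathbf{i}\cdot\vec{u}\pm\vec{v}$ handle the sign bookkeeping correctly, so your argument is complete and fills the gaps the paper leaves to the reader.
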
 
       \begin{proof}
           The proofs for {\rm(a)} and  {\rm(b)} are trivial. The point {\rm(c)} is trivial  for positive $r$.
           The proof of $  < - \vec{x} \, | \, \vec{y} >  = - <  \vec{x} \, | \, \vec{y} > $ is easy, 
           and  {\rm(c)} follows immediately.  The point {\rm(d)} is similar to {\rm(c)}, and  {\rm(e)} is clear. 
       \end{proof}
       In Definition \eqref{die zweite definition} we defined the angle $ \angle(\vec{x}, \vec{y}) $, we wrote \ \
       $\angle(\vec{x}, \vec{y}) \ := \ 
                     \arccos{ \left( \frac{< \vec{x} \: | \: \vec{y} > } { \|\vec{x}\| \cdot \|\vec{y}\|} \right)}$
       \begin{align*}   
                    \  = \  \arccos{ \left( \frac{1}{4} \cdot 
                   \left[ \left\| \frac{\vec{x}}{\|\vec{x}\|}   +  \frac{\vec{y}}{\|\vec{y}\|} \right\|^{2}   - 
                   \left\| \frac{\vec{x}}{\|\vec{x}\|} - \frac{\vec{y}}{\|\vec{y}\|} \right\|^{2} +
                   {\bf i} \cdot  \left( \left\| \frac{\vec{x}}{\|\vec{x}\|}  +  
                   {\bf i} \cdot \frac{ \vec{y}}{\|\vec{y}\|} \right\|^{2} - \left\| \frac{\vec{x}}{\|\vec{x}\|} -
                   {\bf i} \cdot \frac{ \vec{y}}{\|\vec{y}\|} \right\|^{2}  \right)  \right] \right)  }   \ . \                 \end{align*} 
     
   %
       This complex number $ \angle (\vec{x}, \vec{y}) $ is called the {\it  complex angle} 
	     of the pair $ (\vec{x}, \vec{y}) $.  
    \begin{lemma}   \label{lemma eins}
    For a pair $ \vec{x}, \vec{y} \neq \vec{0} $ in a complex normed space  $ ( X, \| \cdot \| )$ it holds that 
         both the real part     
         and the imaginary part of \ $ \cos(\angle(\vec{x}, \vec{y})) $ \ are in the interval $ [ -1, 1 ] $.  
    \end{lemma}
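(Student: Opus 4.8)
The plan is to evaluate $\cos(\angle(\vec{x},\vec{y}))$ explicitly as the normalized product and then bound its real and imaginary parts directly by the triangle inequality, rather than reasoning about the arccosine first. To avoid any circularity I would begin from the normalized product itself. Set $\vec{u} := \vec{x}/\|\vec{x}\|$ and $\vec{v} := \vec{y}/\|\vec{y}\|$; by absolute homogeneity these are unit vectors, and by Definition \eqref{die erste definition} the quotient $q := {<} \vec{x} \, | \, \vec{y} {>}/(\|\vec{x}\|\cdot\|\vec{y}\|)$ equals $\frac14[\,\|\vec{u}+\vec{v}\|^2 - \|\vec{u}-\vec{v}\|^2 + \mathbf{i}\cdot(\|\vec{u}+\mathbf{i}\cdot\vec{v}\|^2 - \|\vec{u}-\mathbf{i}\cdot\vec{v}\|^2)\,]$. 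Thus its real part is $\frac14(\|\vec{u}+\vec{v}\|^2 - \|\vec{u}-\vec{v}\|^2)$ and its imaginary part is $\frac14(\|\vec{u}+\mathbf{i}\cdot\vec{v}\|^2 - \|\vec{u}-\mathbf{i}\cdot\vec{v}\|^2)$.

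The key step is to bound each of the four norms. By the triangle inequality $\|\vec{u}\pm\vec{v}\| \le \|\vec{u}\| + \|\vec{v}\| = 2$, so the squared norms are at most $4$, while trivially they are at least $0$. Inserting the extreme values gives $\mathrm{Re}(q) \le \frac14(4-0) = 1$ and $\mathrm{Re}(q) \ge \frac14(0-4) = -1$, hence $\mathrm{Re}(q)\in[-1,1]$. For the imaginary part I would first note, again by absolute homogeneity, that $\|\mathbf{i}\cdot\vec{v}\| = |\mathbf{i}|\cdot\|\vec{v}\| = 1$, so that $\|\vec{u}\pm\mathbf{i}\cdot\vec{v}\| \le \|\vec{u}\| + \|\mathbf{i}\cdot\vec{v}\| = 2$; the identical argument then yields $\mathrm{Im}(q)\in[-1,1]$.

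Finally I would tie this back to the cosine of the angle. The bounds just obtained say precisely that $q \in {\cal SQ} \subset {\cal B}$, so the arccosine is defined at $q$ and $\angle(\vec{x},\vec{y}) = \arccos(q)$ makes sense. By Proposition \eqref{proposition zwei} we have $\cos\circ\arccos = id_{{\cal B}}$, whence $\cos(\angle(\vec{x},\vec{y})) = q$. Therefore the real and imaginary parts of $\cos(\angle(\vec{x},\vec{y}))$ are exactly $\mathrm{Re}(q)$ and $\mathrm{Im}(q)$, both of which lie in $[-1,1]$.

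The computation itself is routine; the only thing to watch is the logical order. One should not invoke the identity $\cos\circ\arccos=id_{{\cal B}}$ before knowing that $q$ lies in ${\cal B}$, since that is exactly the region where the inverse relation holds. Establishing the two interval bounds first is what guarantees membership in ${\cal SQ}\subset{\cal B}$, so that the final substitution is justified. No property of the norm beyond the triangle inequality and absolute homogeneity is needed, which is why I expect no genuine obstacle here.
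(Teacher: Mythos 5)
Your proof is correct and follows exactly the route the paper intends: the paper's own proof of Lemma \ref{lemma eins} consists of the single remark that ``the lemma can be proven easily with the triangle inequality,'' and your argument is precisely that triangle-inequality bound, carried out in full on the four norms $\|\vec{u}\pm\vec{v}\|$ and $\|\vec{u}\pm\mathbf{i}\cdot\vec{v}\|$. Your additional care about the logical order --- establishing $q\in{\cal SQ}\subset{\cal B}$ before invoking $\cos\circ\arccos=id_{{\cal B}}$ --- is a sound refinement of what the paper leaves implicit (it defers this point to Corollary \ref{corollary always angle}).
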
      
    \begin{proof} 
         The lemma can be proven easily with the triangle inequality.
    \end{proof}   
    \begin{corollary} \label{corollary always angle}
         Lemma \eqref{lemma eins} means that \
         $ \left\{ \cos(\angle(\vec{x}, \vec{y})) \ | \ \vec{x}, \vec{y} \in X\backslash \{\vec{0}\} \right\} $
         is a subset of the `complex square' \
         $ {\cal SQ} :=  \{ r + {\bf i} \cdot s \in {\mathbb C} \ | \ -1 \leq r, s \leq +1 \}$,
         i.e. $\left| \cos(\angle(\vec{x}, \vec{y})) \right| \ \leq \ \sqrt{2}$. \
         It follows  $  \cos(\angle(\vec{x}, \vec{y})) \in {\cal B}$ and  $ \angle(\vec{x}, \vec{y}) \in {\cal A}$,
         i.e. the angle  $ \angle (\vec{x}, \vec{y}) $ is defined for each pair \ $\vec{x}, \vec{y} \neq \vec{0} $.  
    \end{corollary}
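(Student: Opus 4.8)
The plan is to extract the bound on the quotient $c := \frac{< \vec{x} \, | \, \vec{y} >}{\|\vec{x}\| \cdot \|\vec{y}\|}$ directly from Lemma \eqref{lemma eins} and then invoke the homeomorphism of Proposition \eqref{proposition zwei}. By Definition \eqref{die erste definition} this quotient equals $\frac{1}{4}\bigl[\,\|\hat{x}+\hat{y}\|^{2} - \|\hat{x}-\hat{y}\|^{2}\,\bigr] + {\bf i}\cdot\frac{1}{4}\bigl[\,\|\hat{x}+{\bf i}\hat{y}\|^{2} - \|\hat{x}-{\bf i}\hat{y}\|^{2}\,\bigr]$, where $\hat{x} := \vec{x}/\|\vec{x}\|$ and $\hat{y} := \vec{y}/\|\vec{y}\|$; Lemma \eqref{lemma eins} asserts that these two bracketed real and imaginary parts both lie in $[-1,1]$.

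The only content-bearing step is to transcribe this into set membership. Since $\mathcal{SQ}$ is by definition $\{ r + {\bf i}\cdot s \mid -1 \le r,s \le 1\}$, the two bounds of Lemma \eqref{lemma eins} say exactly that $c \in \mathcal{SQ}$, which is the first claim. The modulus estimate then follows from Pythagoras, $|c| = \sqrt{(\operatorname{Re} c)^2 + (\operatorname{Im} c)^2} \le \sqrt{1+1} = \sqrt{2}$, the extremum being attained only at the four corners of the square.

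I would next record the inclusion $\mathcal{SQ} \subset \mathcal{B}$ announced in the introduction. As $\mathcal{B} = \mathbb{C} \setminus \{ r \in \mathbb{R} \mid r < -1 \text{ or } r > +1\}$ removes only the two open real rays beyond $\pm 1$, any $c = r + {\bf i}\cdot s \in \mathcal{SQ}$ with $s \neq 0$ is non-real and hence lies in $\mathcal{B}$, while for $s = 0$ one has $c = r$ with $|r| \le 1$, again inside $\mathcal{B}$. Thus $c \in \mathcal{B}$, and Proposition \eqref{proposition zwei} guarantees that $\arccos(c) \in \mathcal{A}$ is well defined, so the angle of Definition \eqref{die zweite definition} exists for every pair $\vec{x},\vec{y} \neq \vec{0}$ and takes values in $\mathcal{A}$.

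I expect no serious obstacle; the one point deserving care is the mild circularity in the phrasing of Lemma \eqref{lemma eins}, which already speaks of $\cos(\angle(\vec{x},\vec{y}))$ before the angle is known to be defined. This is resolved by reading the lemma as a statement about the quotient $c$ itself, and then closing the loop at the end: once $c \in \mathcal{B}$ is established, Proposition \eqref{proposition zwei} yields $\cos(\angle(\vec{x},\vec{y})) = \cos(\arccos(c)) = c$, so the quantity bounded in Lemma \eqref{lemma eins} is indeed the cosine of the angle, and the identification is legitimate.
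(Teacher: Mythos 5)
Your proposal is correct and follows essentially the same route as the paper, which states this corollary as an immediate consequence of Lemma \eqref{lemma eins} together with the inclusion ${\cal SQ} \subset {\cal B}$ and the homeomorphism $\arccos: {\cal B} \to {\cal A}$ of Proposition \eqref{proposition zwei}, without writing out a separate proof. Your explicit handling of the apparent circularity (reading Lemma \eqref{lemma eins} as a bound on the normalized product $c$ itself, then identifying $c = \cos(\angle(\vec{x},\vec{y}))$ once $c \in {\cal B}$ is known) is a careful touch that the paper leaves implicit, but it is the same argument.
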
 
    \begin{corollary} \label{corollary hexagon}
         The values of the `angles' are from the image $ \arccos({\cal SQ}) $, which forms a symmetric hexagon in
         $ {\cal A} $.  Its center is $ \pi/2$, two corners are $ 0 $ and $ \pi $.  A third is, for instance, \\
         \centerline{ $ \arccos(1 + {\bf i}) \ = \ (\pi/2) - (1/2) \cdot \left[ \arccos(\sqrt{5}-2) + {\bf i} \cdot  
         \log \left(\sqrt{5}+2 + 2 \cdot \sqrt{\sqrt{5}+2} \right)\right] \ \approx \ 0.90 - {\bf i} \cdot 1.1 $.} \\
         We get the other three corners easily with the aid of Proposition \eqref{proposition eins}. 
         
    \end{corollary}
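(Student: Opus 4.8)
The plan is to exploit that, by Proposition \ref{proposition zwei}, $\arccos$ is a homeomorphism of ${\cal B}$ onto ${\cal A}$, so its restriction to the filled square ${\cal SQ}\subset{\cal B}$ (which lies in ${\cal B}$ by Corollary \ref{corollary always angle}) is a homeomorphism onto a topological disk in ${\cal A}$ whose boundary is the $\arccos$-image of $\partial{\cal SQ}$. The whole task thereby reduces to locating the corners of this image curve and checking its symmetry.

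First I would record where $\partial{\cal SQ}$ meets the real axis. The boundary of the square consists of four segments joining the four vertices $\pm1\pm{\bf i}$, and it crosses the real line in exactly the two points $+1$ and $-1$, the tips of the two slits removed to form ${\cal B}$. Away from these tips $\arccos$ is analytic and conformal on the interior of ${\cal B}$, so it carries the four right-angle vertices of the square to four genuine corners of the image. The two remaining corners come from the slit tips: near $z=1$ one has the square-root behaviour $\arccos(z)\sim\sqrt{2(1-z)}$, so the segment passing straight through $1$ is bent into a corner at $\arccos(1)=0$ — parametrising the right edge as $z=1+{\bf i}t$ shows the image enters $0$ along the ray $e^{{\bf i}\pi/4}$ and leaves along $e^{-{\bf i}\pi/4}$ — and symmetrically at $-1$ one gets a corner at $\pi$. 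This produces six corners in all, the hexagon. (The values $\arccos(\pm1)=0,\pi$ also follow at once from Definition \ref{die dritte definition} by setting $s=0$, $r=\pm1$.)

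Next I would compute the sample corner explicitly. For $1+{\bf i}$ we have $r=s=1$, hence $r^2+s^2=2$ and ${\mathsf{G}}_\pm=\sqrt{(2-1)^2+4}\pm 2=\sqrt{5}\pm2$. Definition \ref{die dritte definition} then gives $\arccos(1+{\bf i})=\tfrac{\pi}{2}-\tfrac12\bigl[\arccos(\sqrt{5}-2)+{\bf i}\,{\rm arcosh}(\sqrt{5}+2)\bigr]$, and since $(\sqrt{5}+2)^2-1=8+4\sqrt{5}=4(\sqrt{5}+2)$ one obtains ${\rm arcosh}(\sqrt{5}+2)=\log(\sqrt{5}+2+2\sqrt{\sqrt{5}+2})$, which is exactly the stated value ($\approx 0.90-{\bf i}\cdot1.1$).

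Finally, the other three vertices follow by symmetry, which simultaneously shows the hexagon is symmetric. The factor ${\rm sgn}(s)$ in Definition \ref{die dritte definition} yields the reflection $\arccos(\overline{z})=\overline{\arccos(z)}$, sending $1-{\bf i}$ to $\overline{\arccos(1+{\bf i})}$, while Proposition \ref{proposition eins} supplies the central symmetry $\arccos(-z)=\pi-\arccos(z)$ about $\pi/2$, sending $-1-{\bf i}$ and $-1+{\bf i}$ to $\pi-\arccos(1+{\bf i})$ and $\pi-\overline{\arccos(1+{\bf i})}$. These two symmetries pin $\pi/2$ as the centre and determine all six corners. The hard part is the branch-point argument at $\pm1$: one must justify rigorously that the analytic map genuinely creates corners there rather than smoothing the crossing, since this is precisely what turns the four-sided square into a six-cornered image.
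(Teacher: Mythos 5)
Your proposal is correct, and on the computational side it follows exactly the route the paper intends: the paper states this corollary without any proof, relying on Corollary \eqref{corollary always angle} for the inclusion of the values in $\arccos({\cal SQ})$, on Definition \eqref{die dritte definition} for the numerical corner (your calculation ${\mathsf{G}}_\pm=\sqrt{5}\pm 2$ and the simplification $(\sqrt{5}+2)^2-1=4(\sqrt{5}+2)$ is precisely the omitted computation, and it checks out), and on Proposition \eqref{proposition eins} for the remaining corners. What you add beyond the paper is twofold, and both additions are genuine improvements. First, the paper never justifies that the image is a \emph{hexagon}; your argument --- conformality of $\arccos$ at the four interior vertices $\pm 1\pm{\bf i}$ (derivative $-1/\sqrt{1-z^2}\neq 0$ there), plus the square-root branch behaviour $\arccos(z)\sim\sqrt{2(1-z)}$ at the slit tips $\pm 1$, which halves the straight angle $\pi$ of the edge into a right-angle corner at $0$ and at $\pi$ --- is exactly the missing reason why a four-cornered square acquires a six-cornered image. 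Second, you correctly observe that Proposition \eqref{proposition eins} alone (central symmetry $\arccos(-z)=\pi-\arccos(z)$) produces only \emph{one} further corner from $\arccos(1+{\bf i})$; to obtain all three remaining corners one also needs the conjugation symmetry $\arccos(\overline{z})=\overline{\arccos(z)}$, which you rightly extract from the factor ${\rm sgn}(s)$ in Definition \eqref{die dritte definition}. So the paper's hint ``with the aid of Proposition \eqref{proposition eins}'' is, strictly speaking, insufficient by itself, and your proof repairs this small gap as well.
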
   
         Now we prove that a stronger property fails.
   \begin{theorem} \label{conjecture csb}
         Generally the {\sf CSB }inequality is not fulfilled. To show this we construct examples of 
         complex normed spaces $ X $ with elements \ $ \vec{x} , \vec{y} $ \  such that  
        $$ |< \vec{x} \: | \: \vec{y} >   |  \ > \ \|\vec{x}\| \cdot \|\vec{y}\|  \ \ \text{or, in other words,} \ \
        1 \ < \   \left| \cos(\angle(\vec{x}, \vec{y})) \right| \ .            $$ 
   \end{theorem}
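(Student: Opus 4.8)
The plan is to reduce the claim to a single scalar inequality and then realise it in an explicit low-dimensional space. By property ({\tt An} 5) the angle is invariant under multiplication by positive reals, so I may assume $\|\vec{x}\| = \|\vec{y}\| = 1$. Writing $\cos(\angle(\vec{x},\vec{y})) = R + {\bf i}\cdot S$ and reading off Definition \eqref{die erste definition}, one has
\[ R = \tfrac14\left(\|\vec{x}+\vec{y}\|^2 - \|\vec{x}-\vec{y}\|^2\right), \qquad S = \tfrac14\left(\|\vec{x}+{\bf i}\cdot\vec{y}\|^2 - \|\vec{x}-{\bf i}\cdot\vec{y}\|^2\right). \]
By Corollary \eqref{corollary always angle} we always have $R + {\bf i}\cdot S \in {\cal SQ}$, i.e. $|R|,|S|\le 1$; the {\sf CSB} inequality is exactly the statement $\sqrt{R^2+S^2}\le 1$. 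So it suffices to exhibit one complex normed space and one unit pair with $R^2 + S^2 > 1$. This cannot happen classically: if $\|\cdot\|$ came from an inner product, the parallelogram identity forces $R = \mathrm{Re}<\vec{x}|\vec{y}>$ and $S = \mathrm{Im}<\vec{x}|\vec{y}>$, whence $R^2+S^2 = |<\vec{x}|\vec{y}>|^2 \le 1$ by the genuine {\sf CSB} inequality. The numbers $R$ (the ``real alignment'' of $\vec{y}$ with $\vec{x}$) and $S$ (the alignment of ${\bf i}\cdot\vec{y}$ with $\vec{x}$) are thus jointly trapped in the unit disc only because the parallelogram law couples them, and the whole point is to break that coupling with a non-Euclidean norm.

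The mechanism I would use is a flat face of the unit sphere. Suppose $\|\cdot\|$ has a nondegenerate flat face $F$ and $\vec{x}$ lies in its relative interior. Let $\vec{y} = \vec{y}(t)$ be a unit vector moving inside $F$ away from $\vec{x}$, with $\vec{y}(0)=\vec{x}$. Because the midpoint $\tfrac12(\vec{x} + \vec{y})$ still lies on $F$ it has norm $1$, so $\|\vec{x} + \vec{y}\| = 2$ exactly; hence $R = 1 - \tfrac14\|\vec{x} - \vec{y}\|^2 = 1 - O(t^2)$. On the other hand ${\bf i}\cdot\vec{y}$ leaves the face (it lands on the rotated face ${\bf i}\cdot F$), so nothing protects $\|\vec{x} \pm {\bf i}\cdot\vec{y}\|$, and with $S(0)=0$ the quantity $S=S(t)$ varies to first order, $S(t)=S'(0)\cdot t + O(t^2)$. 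Writing $c$ for the speed of $\vec{y}(t)$, we get $R^2 + S^2 = 1 + \left(S'(0)^2 - \tfrac12 c^2\right)t^2 + O(t^3)$, so as soon as the first-order imaginary gain beats the second-order real loss, $S'(0)^2 > \tfrac12 c^2$, the pair violates {\sf CSB}.

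Concretely I would realise this with an explicit $S^1$-invariant polytope norm on $X = \mathbb{C}^2$ (the invariance $\|e^{{\bf i}\cdot\varphi}\cdot\vec{z}\| = \|\vec{z}\|$ being exactly the absolute-homogeneity axiom, which forces $\|{\bf i}\cdot\vec{z}\| = \|\vec{z}\|$), choosing the flat face $F$ and its rotate ${\bf i}\cdot F$ so that $F$ is flat while ${\bf i}\cdot F$ is steep in the relevant direction. I would then write down one explicit pair $\vec{x},\vec{y}\in F$ and evaluate the six norms $\|\vec{x}\pm\vec{y}\|$ and $\|\vec{x}\pm{\bf i}\cdot\vec{y}\|$ by hand, certifying $R^2 + S^2 > 1$ with a strict margin; by Corollary \eqref{corollary always angle} this is precisely $1 < |\cos(\angle(\vec{x},\vec{y}))|$.

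The main obstacle is twofold. First, and most delicate, is checking that the gauge I write down is an honest norm: absolute homogeneity (including the mandatory $S^1$-invariance), the triangle inequality, and positive definiteness must all be verified for the reshaped unit ball, since the non-Euclidean reshaping is exactly what threatens convexity. Second is bookkeeping: I must keep $t$ small enough that $\vec{y}(t)$ and its midpoint with $\vec{x}$ genuinely remain on the flat face (so that $R = 1 - \tfrac14\|\vec{x}-\vec{y}\|^2$ holds exactly rather than only asymptotically), yet large enough that the linear growth of $S$ already pushes $R^2+S^2$ past $1$. Presenting one fixed numerical pair, instead of a limit, is the cleanest way to sidestep the error-term analysis of the expansion above.
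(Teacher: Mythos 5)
Your reduction to unit vectors and to the scalar inequality $R^{2}+S^{2}>1$ is correct, and the flat-face mechanism points in a sensible direction (the paper's own example is a balanced polytope-type ball with flat faces). But as written the argument has a genuine gap: the theorem is an existence statement, and you never produce the object whose existence is claimed. Everything after ``the mechanism I would use'' is conditional --- ``I would realise this with an explicit $S^1$-invariant polytope norm \dots I would then write down one explicit pair \dots'' --- and the two obstacles you yourself name (that the gauge is an honest norm: convex, balanced, absorbing, locally bounded; and that the numerics close) are the substance of the proof, not bookkeeping. Moreover, your perturbative route needs a quantitative fact you never establish: an $S^1$-invariant norm with a flat face along which the one-sided derivative satisfies $S'(0)^{2} > \tfrac12 c^{2}$. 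Balancedness couples the face $F$ with ${\bf i}\cdot F$, so ``$F$ flat while ${\bf i}\cdot F$ is steep'' is a constraint on one and the same convex body; a short computation gives only the a priori bound $|S'(0)| \leq \sqrt{2}\,c$ (each of $\|\vec{x}\pm{\bf i}\vec{y}(t)\|$ starts at $\sqrt{2}$ and is $1$-Lipschitz in $t$), so the needed threshold $c/\sqrt{2}$ is not excluded, but neither is its attainability demonstrated. Without an explicit norm and an explicit pair, nothing is proved.

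For comparison, the paper avoids any first-order analysis by a global construction: on ${\mathbb C}^{2}$ it takes as unit ball ${\mathsf conv}\circ{\mathsf twist}$ of the four points $(1,0),\,(0,1),\,(r,-r),\,(r,-{\bf i}\cdot r)$, verifies the ${\mathsf MINKOWSKI}$-property so that the gauge is a norm, and then with $\vec{a}=(1,0)$, $\vec{b}=(0,1)$ reads off all four relevant norms exactly: $\|\vec{a}+\vec{b}\|_r = \|\vec{a}+{\bf i}\vec{b}\|_r = 2$ (both segments lie on flat faces), while $\|\vec{a}-\vec{b}\|_r = \|\vec{a}-{\bf i}\vec{b}\|_r = 1/r$ for $r \geq 1/2$ (because $(r,-r)$ and $(r,-{\bf i}r)$ are unit vectors by construction). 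Hence $R = S = 1-\tfrac{1}{4r^{2}}$, so $|< \vec{a} \: | \: \vec{b} >_r| \to \sqrt{2}$, and {\sf CSB} fails for large $r$ with both coordinates pushed toward $1$ simultaneously --- no derivative condition, no error terms, no smallness constraints on a parameter $t$. If you want to complete your local approach, the honest finish is essentially to do what the paper does: exhibit the balanced convex body explicitly and evaluate the four norms; at that point the perturbation expansion becomes unnecessary.
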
  
   \begin{proof}
        The construction needs some preperations. First, let \ $ {\mathsf S} \subset X  $ \
        be any subset of the complex vector space $ X \neq \{ \vec{0} \} $. 
        Note that in the beginning $ X $ carries no norm.  
    \begin{definition}  
        If \ $ {\mathsf S}  $ \ has the property that for any vector \
        $ \vec{x} \in {\mathsf S}  , \ \vec{x} \neq  \vec{0} $ \ there is a real  number \ $ \omega > 0 $ \ such that \
        $ \overline{\omega} \cdot \vec{x} \notin  {\mathsf S} $ \ for all \ $  \overline{\omega} \geq \omega $, \ 
        we call the set  $ {\mathsf S} $ { \it `locally bounded'}.  \\
          If \ $ {\mathsf S} $ \ has the  property that for any vector \
        $ \vec{x} \in X , \ \vec{x} \neq  \vec{0} $ \ there is a real number \ $ \varepsilon > 0 $ \ such that \
        $ \varepsilon \cdot \vec{x} \in  {\mathsf S} $, \ the set $ {\mathsf S} $ is called { \it `absorbing'}.    
    \end{definition}     
    \begin{definition}  \label{minkowski property} 
        We say that \ $ {\mathsf S} \subset X $ \ has the `${\mathsf MINKOWSKI }$-property' \ if and only if \
        $ {\mathsf S} $ \ is both { \it locally bounded} and { \it absorbing}. \ In this case 
        we can   define a positive definite functional \ $ \| \cdot \|_{ \mathsf S} $ on $ X $, i.e. 
        $ \| \cdot \|_{ \mathsf S} :  X \rightarrow  {\mathbb R}^{+} \cup \{ 0 \} $, the so-called \
        `${\mathsf MINKOWSKI }$-functional'.  \  We define  
    \begin{align}    
         \|\vec{x}\|_{ \mathsf S} \ := \   \inf \left\{ \eta > 0 \ | \ \eta^{-1} \cdot \vec{x} \in {\mathsf S}
         \right\} \ \text{for} \ \ \vec{x} \neq \vec{0} , \ \ \text{and} \ \ \|\vec{0}\|_{ \mathsf S}  :=  0 \ . 
    \end{align}   
    \end{definition}     
       If in addition the set ${\mathsf S}$ is {\it balanced}
       (i.e. for $ \vec{x} \in {\mathsf S}$ and $ \phi \in [ 0, 2 \, \pi ] $ it follows 
       $ e^{{\bf i} \cdot \phi}  \cdot \vec{x} \in {\mathsf S}$) \  and {\it convex} (i.e. for two elements \ 
       $ \vec{x}, \vec{y} \in {\mathsf S}$  the line segment $ {\mathsf SEG} $  between 
       $ \vec{x}$  and $ \vec{y} $,  i.e. the set   \\
    \centerline  {$  {\mathsf SEG} := \left\{ s \cdot \vec{x} + t \cdot \vec{y} \ | \ 0 \leq s, t \leq 1 , \ \text{and}
                                          \ s + t = 1 \right\}, $       }    \\    
       is a subset of ${\mathsf S}$),  the just constructed  ${\mathsf MINKOWSKI }$-functional \
       $ \| \cdot \|_{ \mathsf S} $ \ is a norm.
       This means that the  pair \ $ \left( X, \| \cdot \|_{ \mathsf S} \right) $ \ is a complex normed vector space. \ 
       (Compare  \cite{Rudin}  or \cite{Werner}.)   
                                                                  
       We need some maps which extend subsets of $ X $. 
    \begin{definition} Let ${\sf S} \neq \emptyset $ be any subset of the complex vector space $ X $. We define 
    \begin{align*}  
     {\mathsf twist}({\sf S}) \ & := \ \left\{ e^{{\bf i} \cdot \phi}  \cdot \vec{x} \ | \ 
                         \phi \in {\mathbb R} \ \ \text{and} \ \ \vec{x} \in {\sf S} \right\}  \ ,  \\ 
     {\mathsf conv}({\sf S}) \ & := \ \left\{ \sum_{j = 1}^{k} t_j \cdot \vec{x_j} \ | \  k \in  {\mathbb N} , \
       0 \leq t_j \leq 1 , \ \vec{x_j} \in {\sf S} \ \ \text{for all} \ \ 1 \leq j \leq k , \ \ \text{and} \
       \sum_{j=1}^{k} t_j = 1      \right\} \ . 
    \end{align*}    
    We complete the definition with \ 
    $ {\mathsf twist}(\emptyset) : =  {\mathsf conv}(\emptyset) :=  \{ \vec{0} \} $. 
    \end{definition}
    We mention some facts about the two maps. All proofs are easy, and we abstain from showing them. 
    Let \ $ {\sf S} \subset X $. 
  \begin{itemize}
       \item \     ${\sf S} \ \subset \ {\mathsf twist}({\sf S}) \cap {\mathsf conv}({\sf S})$, 
       \item \     $  {\mathsf twist} \circ {\mathsf twist} \ = \  {\mathsf twist} $, \  
                   $  {\mathsf conv} \circ {\mathsf conv} \ = \  {\mathsf conv} $, \ 
       \item \     $  {\mathsf twist} \circ {\mathsf conv} ({\sf S}) \ 
                                         \subset \ {\mathsf conv} \circ {\mathsf twist} ({\sf S})  $,    
       \item \     $  {\mathsf twist} \circ {\mathsf conv} \circ {\mathsf twist} 
                                                      \ = \ {\mathsf conv} \circ {\mathsf twist} $, \
       \item \     $   {\mathsf conv}({\sf S}) $ is convex,  $ {\mathsf twist}({\sf S}) $ is balanced,  
                       ${\mathsf conv} \circ {\mathsf twist} ({\sf S}) $ is both convex and balanced.
       \item \     If $ {\sf S} $ is locally bounded, $ {\mathsf twist}({\sf S}) $ is locally bounded, too.   
  \end{itemize}  
   (The reader may look for a locally bounded set $ {\sf S} $ such that $ {\mathsf conv}({\sf S}) $ 
   is not locally bounded.) 
                                                                  
   We are constructing examples  of complex normed vector spaces which do not fulfil the {\sf CSB }inequality to prove 
   Theorem  \eqref{conjecture csb}. As the vector space we take the easiest non-trivial space, the two-dimensional 
   space \ 
   $ {\mathbb C}^{2} = \left\{ v \cdot ( 1, 0) + w \cdot ( 0, 1) \ | \ v,w \in {\mathbb C} \right\} $. \ 
   For each positive real number \ $ r $, i.e. \ $ r > 0 $,  we construct a norm \
   $ \| \cdot \|_{r} $ \ on \ $ {\mathbb C}^{2} $. \ We define 
   the finite subset \ $ {\sf S}_r $ of $ {\mathbb C}^{2}$, the set \ ${\sf S}_r $ \  contains four elements 
   $$   {\sf S}_r :=  \left\{ \left(  \begin{array}{c} 1  \\ 0  \end{array}  \right),  \ 
                              \left(  \begin{array}{c} 0  \\ 1  \end{array}  \right),  \ 
                              \left(  \begin{array}{c} r \\ -r  \end{array}  \right),  \
                              \left(  \begin{array}{c}  r \\ -{\bf i} \cdot r  \end{array}  \right) \right\} \ . $$  
      We build the convex and balanced set   
           $$ {\sf M}_r :=  {\mathsf conv} \circ {\mathsf twist} \, ({\sf S}_r) \ . $$  
    \begin{lemma} \label{lemma minkowski property}
             The set \ $ {\sf M}_r $ \ fulfils the ${\mathsf MINKOWSKI }$-property of 
             Definition \eqref{minkowski property}.
    \end{lemma} 
    \begin{proof}  Let \ $ \vec{x} \neq \vec{0} $. The subset \
       $ \{ (1,0), (0,1) \} \subset {\sf S}_r $ \ is already a basis of  $ {\mathbb C}^{2}$.
       Hence there is a representation  $ \vec{x} = v \cdot ( 1, 0) + w \cdot ( 0, 1) $ with suitable complex numbers 
       $ v, w $.    We can write this as  \
      $ \vec{x} = a \cdot e^{ {\bf i} \cdot \phi} \cdot ( 1, 0) + b \cdot e^{ {\bf i} \cdot \psi} \cdot ( 0, 1) $             with non-negative real numbers \ $ a, b $, and \ $ \phi, \psi \in [0,2 \pi ] $. \ We take the sum \
      $ \chi := a + b > 0 $, \ and we write \ 
      $$ \vec{x} =  \chi \cdot \left[ \frac{a}{\chi} \cdot e^{ {\bf i} \cdot \phi} \cdot 
         \left(  \begin{array}{c} 1  \\ 0  \end{array}  \right) 
         + \frac{b}{\chi}  \cdot e^{ {\bf i} \cdot \psi} \cdot 
         \left(  \begin{array}{c} 0  \\ 1  \end{array}  \right)   \right] \ .  $$  
      Since \ $ (a / \chi) \cdot e^{ {\bf i} \cdot \phi} \cdot ( 1, 0) 
      + (b / \chi)  \cdot e^{ {\bf i} \cdot \psi} \cdot ( 0, 1) \in  {\sf M}_r $ \ it follows 
      \ $ (1 / \chi) \cdot  \vec{x}  \in  {\sf M}_r $, i.e.  ${\sf M}_r $ is absorbing. 
                                 
      Since  \ $ {\sf S}_r $ is finite, it is locally bounded. This property will be transported by $ {\mathsf twist} $
      and in this case also by $ {\mathsf conv} $, since \ $ {\sf S}_r $ is finite. 
      Hence $ {\sf M}_r $ is locally bounded, too. The lemma is proven. 
    \end{proof}      
    Now we define for each number  $ r \in {\mathbb R}^{+}$  a norm on  ${\mathbb C}^{2}$. We use the
    ${\mathsf MINKOWSKI }$-functional of Definition  \eqref{minkowski property}. \ 
    Let for \ $ \vec{x} \in {\mathbb C}^{2}$  
   $$  \|\vec{x}\|_{r} \ := \   \inf \left\{ \eta > 0 \ | \ \eta^{-1} \cdot \vec{x} \in  {\sf M}_r 
         \right\} \ \text{for} \ \ \vec{x} \neq \vec{0} , \ \ \text{and} \ \ \|\vec{0}\|_{r}  :=  0 \ . $$   
   The set \  ${\sf M}_r$ is convex and balanced, and by Lemma \eqref{lemma minkowski property} it satisfies 
   the ${\mathsf MINKOWSKI}$-property. We get that the pair $ ({\mathbb C}^{2},  \| \cdot \|_{r}) $ is a
   normed space, please see the remark after Definition \eqref{minkowski property}.   
                                                                                               
   Now we take two vectors \ $ \vec{a} := (1,0) $ and $ \vec{b} := (0,1) $, both are unit vectors in the spaces
   $ ({\mathbb C}^{2},  \| \cdot \|_{r}) $, for all $ r \in {\mathbb R}^{+} $.  We consider the product  \                    $< \vec{a} \: | \: \vec{b} >_r := < \vec{a} \: | \: \vec{b} > $. \ 
   By Definition \eqref{die erste definition} we have  
   \begin{align}   \label{term  product complex normed spaces}
     < \vec{a} \: | \: \vec{b} >_r \ & = \ \frac{1}{4} \cdot  
                  \left[ \left\| \vec{a} +  \vec{b} \right\|_r^{2} -   \left\| \vec{a} - \vec{b} \right\|_r^{2} +
                  {\bf i} \cdot  \left( \left\| \vec{a}  + {\bf i} \cdot \vec{b} \right\|_r^{2} - 
                  \left\| \vec{a} - {\bf i} \cdot \vec{b} \right\|_r^{2} \right) \right] \\    
                  \ & = \ \frac{1}{4} \cdot  
                  \left[ \left\| (1,1) \right\|_r^{2} -   \left\| (1,-1) \right\|_r^{2} +
                  {\bf i} \cdot  \left( \left\| (1,  {\bf i} ) \right\|_r^{2} - 
                  \left\| (1,  -{\bf i} )  \right\|_r^{2} \right) \right] \ .  
    \end{align}   
    We consider the norms of four vectors \
     $ \|(1,1)\|_r, \ \|(1,-1)\|_r, \ \|(1, {\bf i})\|_r, \ \|(1,-{\bf i})\|_r $. \ Since we have convex combinations \
     $$ \frac{1}{2} \cdot   \left(  \begin{array}{c} 1  \\ 1  \end{array}  \right) 
     =  \frac{1}{2} \cdot  \left(  \begin{array}{c} 1  \\ 0  \end{array}  \right)  + 
        \frac{1}{2} \cdot  \left(  \begin{array}{c} 0  \\ 1  \end{array}  \right) \ \in {\sf M}_r  \ , \ \ \
        \frac{1}{2} \cdot  \left(  \begin{array}{c} 1  \\  {\bf i}  \end{array}  \right) 
     =  \frac{1}{2} \cdot  \left(  \begin{array}{c} 1  \\  0 \end{array}  \right)  +  
        \frac{1}{2} \cdot  \left(  \begin{array}{c} 0  \\  {\bf i}  \end{array}  \right)  \ \in {\sf M}_r \ ,  $$
     it  follows \ $ \|(1,1)\|_r = 2 = \|(1, {\bf i})\|_r $ .    
                                                                        
     By definition of the set ${\sf M}_r$ it contains two vectors $ (r , -r) , \ ( r , -{\bf i} \cdot r ) $. 
     For  $ r \geq 1/2 $ \ both are unit vectors in \ $ ({\mathbb C}^{2}, \| \cdot \|_{r}) $. \ We have  
     $ r \cdot (1,-1) = (r , -r) $ and $ r \cdot (1,-{\bf i}) = (r , -{\bf i} \cdot r) $.  \ If  $ r $ 
     is sufficient positive, i.e. $ r \geq 1/2 $, we get the two norms \
     $  \|(1,-1)\|_r =  1/r = \|(1,-{\bf i})\|_r $. \  
     Finally we get for the product \ $ < \vec{a} \: | \: \vec{b} >_r $ \ the result     
    \begin{align} 
           < \vec{a} \: | \: \vec{b} >_r \ = \ \frac{1}{4} \cdot  \left[ 2^{2} - \left(\frac{1}{r}\right)^{2} + 
           {\bf i} \cdot \left( 2^{2} - \left(\frac{1}{r}\right)^{2} \right) \right] \ ,  
    \end{align} 
    this means for the modulus    
    \begin{align} 
           \left| < \vec{a} \: | \: \vec{b} >_r \right| 
                   \ = \ \sqrt{ \left[1 - \left(\frac{1}{2 \cdot r}\right)^{2} \right]^{2} + 
                         \left[1 - \left(\frac{1}{2 \cdot r}\right)^{2} \right]^{2}} 
                   \ = \ \sqrt{2} \cdot \left[1 - \left(\frac{1}{2 \cdot r}\right)^{2} \right] \ . 
    \end{align} 
    Obviously, we get the  limit \ $ \lim_{r \rightarrow \infty} (|< \vec{a} \: | \: \vec{b} >_r|) = \sqrt{2} $. 
    \ Since \ $ \|\vec{a}\|_{r}  = 1 = \|\vec{b}\|_{r} $ \ for all \ $ r $,  this is sufficient to prove  
    Theorem \eqref{conjecture csb}. 
   \end{proof}   
   Because the expression in equation \eqref{term  product complex normed spaces} yields a characteristic property 
   of complex normed spaces, we add a definition and a proposition. 
   \begin{definition}
       Let  \ $ (X, \| \cdot \|) $ be a complex normed vector space. \ We define a positive number \ 
       $ { \cal D }(X, \| \cdot \|) $, the \ {\it `Deformation'}  of  $ (X, \| \cdot \|) $. \ Let 
        $ { \cal D }(\{ \vec{0} \}, \| \cdot \|) := 1 $, and for $ X \neq \{ \vec{0} \} $ we define    
   \begin{align*}   { \cal D }(X, \| \cdot \|) \ := \   
      \sup \ \left\{ \ \text{the modulus of} \ \left(< \vec{a} \: | \: \vec{b} >\right) \ | \ \vec{a}, \vec{b} \ 
      \text{are unit vectors in} \ \left(X, \| \cdot \| \right) \right\} \ = \   \\
      \sup \  \left\{ \ \frac{1}{4} \cdot  \sqrt{
           \left[ \left\| \vec{a} +  \vec{b} \right\|^{2} -   \left\| \vec{a} - \vec{b} \right\|^{2} \right]^{2} +
           \left[ \left\| \vec{a}  + {\bf i} \cdot \vec{b} \right\|^{2} - 
           \left\| \vec{a} - {\bf i} \cdot \vec{b} \right\|^{2} \right]^{2} }
            \ | \ \vec{a}, \vec{b} \in X, \ \| \vec{a} \| = 1 = \| \vec{b} \| \ . \right\} 
   \end{align*}    
   \end{definition} 
   \begin{proposition}
         Let  the pair \ $ (X, \| \cdot \|) $ \ be a complex normed vector space. \ 
         It holds     $$   1 \  \leq  \ { \cal D }(X, \| \cdot \|) \ \leq \ \sqrt{2}  \ .  $$
   \end{proposition}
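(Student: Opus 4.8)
The plan is to establish the two inequalities separately. The degenerate case $X = \{\vec{0}\}$ is immediate, since there $ {\cal D}(X, \| \cdot \|) := 1 $ holds by definition; so I assume $ X \neq \{\vec{0}\} $ and argue about the supremum taken over all pairs of unit vectors $\vec{a}, \vec{b}$.

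For the upper bound $ {\cal D}(X, \| \cdot \|) \leq \sqrt{2} $ I would observe that for unit vectors $\|\vec{a}\| = \|\vec{b}\| = 1$, so the normalizations in Definition \eqref{die erste definition} disappear and $ <\vec{a} \: | \: \vec{b}> = \cos(\angle(\vec{a},\vec{b})) $. By Lemma \eqref{lemma eins} and Corollary \eqref{corollary always angle}, the real part $ \frac{1}{4}(\|\vec{a}+\vec{b}\|^{2} - \|\vec{a}-\vec{b}\|^{2}) $ and the imaginary part $ \frac{1}{4}(\|\vec{a}+{\bf i}\cdot\vec{b}\|^{2} - \|\vec{a}-{\bf i}\cdot\vec{b}\|^{2}) $ both lie in $[-1,1]$, whence $ |<\vec{a} \: | \: \vec{b}>| \leq \sqrt{1^{2}+1^{2}} = \sqrt{2} $. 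If a self-contained derivation is preferred, each component bound follows directly from the triangle inequality: for instance $ \|\vec{a}+\vec{b}\|^{2} \leq (\|\vec{a}\|+\|\vec{b}\|)^{2} = 4 $ together with $ \|\vec{a}-\vec{b}\|^{2} \geq 0 $ gives the real part $\leq 1$, the reverse estimate gives $\geq -1$, and the imaginary part is treated identically after noting $ \|{\bf i}\cdot\vec{b}\| = \|\vec{b}\| = 1 $. Passing to the supremum over all admissible pairs yields $ {\cal D}(X, \| \cdot \|) \leq \sqrt{2} $.

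For the lower bound $ {\cal D}(X, \| \cdot \|) \geq 1 $ I would use the diagonal pair. Since $ X \neq \{\vec{0}\} $ there is a unit vector $\vec{a}$, and the pair $(\vec{a}, \vec{a})$ is admissible. By Proposition \eqref{proposition drei}(e) we have $ <\vec{a} \: | \: \vec{a}> = \|\vec{a}\|^{2} = 1 $; equivalently, inserting $\vec{x} = \vec{y} = \vec{a}$ into Definition \eqref{die erste definition} gives $ \frac{1}{4}[\,2^{2} - 0^{2} + {\bf i}\cdot((\sqrt{2})^{2} - (\sqrt{2})^{2})\,] = 1 $. Thus the value $ |<\vec{a} \: | \: \vec{a}>| = 1 $ occurs in the set whose supremum defines $ {\cal D} $, so $ {\cal D}(X, \| \cdot \|) \geq 1 $. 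Combining both bounds gives $ 1 \leq {\cal D}(X, \| \cdot \|) \leq \sqrt{2} $.

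I do not expect any serious obstacle here: the upper bound is essentially Lemma \eqref{lemma eins} specialized to unit vectors, and the lower bound is witnessed by the diagonal pair via positive definiteness. The only points needing minor care are the degenerate case $ X = \{\vec{0}\} $, which the definition already covers, and keeping the normalization bookkeeping of Definition \eqref{die erste definition} straight when specializing to unit vectors.
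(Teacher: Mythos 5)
Your proposal is correct and follows essentially the same route as the paper: the lower bound is witnessed by the diagonal pair $(\vec{a},\vec{a})$ (the paper offers $\vec{b}:=\vec{a}$ or $\vec{b}:={\bf i}\cdot\vec{a}$ as witnesses), and the upper bound is exactly Lemma \eqref{lemma eins} specialized to unit vectors, where the normalizations drop out. Your added details (the explicit computation giving $<\vec{a}\,|\,\vec{a}>=1$ and the self-contained triangle-inequality estimate) merely spell out what the paper leaves implicit.
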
 
   \begin{proof} With \ $ \vec{b} := \vec{a} $ \ or \ $ \vec{b} :=  {\bf i} \cdot \vec{a} $ \ we get \ 
           $ 1 \  \leq  \ { \cal D }(X, \| \cdot \|) $. \ From Lemma \eqref{lemma eins} it follows \
           $  { \cal D }(X, \| \cdot \|) \ \leq \ \sqrt{2} $.
   \end{proof}  
   The above considerations suggest the following equivalence. One direction is trivial.
   \begin{conjecture}  In a complex normed vector space  \ $ (X, \| \cdot \|) $ \  the norm \ $ \| \cdot \| $ \ is 
        generated by an inner product by Definition \eqref{die erste definition} if and only if \
        $ { \cal D }(X, \| \cdot \|) \ = \ 1 $. 
   \end{conjecture}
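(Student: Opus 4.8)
The plan is to prove the easy implication completely and to reduce the hard implication to one sharp question about the norm, where I expect the genuine difficulty to sit. First I would dispose of the direction inner product $\Rightarrow {\cal D}=1$. If the norm is generated by an inner product $<\cdot\,|\,\cdot>_{0}$ through equation \eqref{allererste definition}, then for unit $\vec a,\vec b$ the bracket in Definition \eqref{die erste definition} is exactly the polarization of $<\cdot\,|\,\cdot>_{0}$, so the product of Definition \eqref{die erste definition} coincides with the genuine inner product. The classical Cauchy--Schwarz inequality for inner products gives $|<\vec a\,|\,\vec b>_{0}|\le \|\vec a\|\cdot\|\vec b\|=1$, hence ${\cal D}(X,\|\cdot\|)\le 1$; combined with the preceding Proposition (which gives ${\cal D}\ge 1$) this forces ${\cal D}(X,\|\cdot\|)=1$.

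For the converse I would first rephrase both sides into comparable language. By Proposition \eqref{proposition drei}(c) the ratio $|<\vec x\,|\,\vec y>|/(\|\vec x\|\cdot\|\vec y\|)$ is invariant under positive rescaling of $\vec x$ and $\vec y$, so ${\cal D}(X,\|\cdot\|)=1$ is precisely the assertion that the {\sf CSB} inequality $|<\vec x\,|\,\vec y>|\le \|\vec x\|\cdot\|\vec y\|$ holds throughout $X$. On the other side, by the parallelogram criterion recalled in Section~2, the norm comes from an inner product if and only if $\|\vec x+\vec y\|^{2}+\|\vec x-\vec y\|^{2}=2\|\vec x\|^{2}+2\|\vec y\|^{2}$ for all $\vec x,\vec y$. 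Since both the {\sf CSB} inequality and the parallelogram identity refer only to $\vec x,\vec y$ and their complex span, I would reduce at once to $\dim_{\mathbb C}X=2$. The whole matter is then: does the {\sf CSB} inequality on ${\mathbb C}^{2}$ force the parallelogram identity?

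To locate the obstruction, fix unit vectors $\vec x,\vec y$ and study the continuous $2\pi$-periodic function $s(\theta):=\|\vec x+e^{{\bf i}\cdot\theta}\cdot\vec y\|^{2}$. Applying Definition \eqref{die erste definition} to the pair $(\vec x,e^{{\bf i}\cdot\theta}\cdot\vec y)$ and using $<\vec x\,|\,{\bf i}\cdot\vec y>=-{\bf i}\cdot<\vec x\,|\,\vec y>$ (which follows from Proposition \eqref{proposition drei}(a),(d)), one finds that the real and imaginary parts of this product are $\tfrac14[s(\theta)-s(\theta+\pi)]$ and $\tfrac14[s(\theta+\tfrac{\pi}{2})-s(\theta-\tfrac{\pi}{2})]$. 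Hence the {\sf CSB} inequality along this phase family says exactly
\[ [\,s(\theta)-s(\theta+\pi)\,]^{2}+[\,s(\theta+\tfrac{\pi}{2})-s(\theta-\tfrac{\pi}{2})\,]^{2}\ \le\ 16 , \]
a condition depending only on the antisymmetric part $s(\theta)-s(\theta+\pi)$ of $s$. The parallelogram identity for the same unit pair reads $s(\theta)+s(\theta+\pi)=4$, a condition on the symmetric part of $s$. In Fourier language, {\sf CSB} constrains only the odd harmonics of $s$, while the parallelogram law pins down its even harmonics.

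This decoupling is exactly the hard part: the symmetric data one must control is invisible to {\sf CSB} along any single phase family, so a proof must couple distinct pairs of vectors in order to transfer information from odd to even harmonics. The plan would be to feed {\sf CSB} the rotated pairs $(\vec x+t\cdot\vec y,\vec y)$ of Theorem \eqref{allererstes theorem}(An~8), together with $(\vec x+\vec y,\vec x-\vec y)$, and to argue that the simultaneous validity of all these inequalities leaves no room for a nonzero even harmonic. I regard the existence of such a coupling as the decisive and still unresolved point, which is why the statement is offered only as a conjecture; indeed highly symmetric non-Euclidean norms, such as the $\ell^{\infty}$-norm on ${\mathbb C}^{2}$, appear to attain ${\cal D}=1$ without obeying the parallelogram identity, so the coupling argument must either rule this apparent behaviour out or be replaced by a genuine counterexample forcing a revision of the conjecture.
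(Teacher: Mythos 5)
Your proposal correctly reflects the actual status of this statement: in the paper it is only a \emph{conjecture}, and the paper's entire ``proof'' is the one-line remark that one direction is trivial. Your treatment of that trivial direction is exactly the intended one and is correct: if the norm comes from an inner product via equation \eqref{allererste definition}, the product of Definition \eqref{die erste definition} coincides with that inner product, the classical Cauchy--Schwarz inequality gives $ {\cal D}(X,\|\cdot\|)\le 1 $, and the preceding Proposition supplies $ {\cal D}\ge 1 $. Your reformulation of $ {\cal D}=1 $ as the {\sf CSB} inequality (via the normalization built into Definition \eqref{die erste definition}) and the reduction of both sides to two-dimensional complex subspaces are also sound.

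Where you go beyond the paper is the analysis of the hard direction, and your key computation checks out: for unit vectors and $ s(\theta)=\|\vec x+e^{{\bf i}\theta}\vec y\|^{2} $ one indeed has $ \mathrm{Re}<\vec x\,|\,e^{{\bf i}\theta}\vec y> = \tfrac14[s(\theta)-s(\theta+\pi)] $ and $ \mathrm{Im}<\vec x\,|\,e^{{\bf i}\theta}\vec y> = \tfrac14[s(\theta+\tfrac{\pi}{2})-s(\theta-\tfrac{\pi}{2})] $, so {\sf CSB} along a fixed phase family constrains only the $\pi$-antiperiodic part of $s$, while the parallelogram identity for the same pairs is a statement about the $\pi$-periodic part. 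This decoupling observation is a genuine insight the paper does not contain, and it explains structurally why the converse cannot be settled by the paper's own tools (all of the paper's computations, e.g.\ Proposition \eqref{proposition drei} and the table, stay inside a single phase orbit). Two caveats. First, your closing suggestion that $\ell^{\infty}$ on $\mathbb{C}^{2}$ ``appears'' to satisfy $ {\cal D}=1 $ is unverified: the obvious test configurations (e.g.\ $\vec a=\vec b=(1,a_2)$, or $a_1=b_1$ with $a_2={\bf i}b_2$) all give modulus $\le 1$ with a visible tension between the real and imaginary parts, but this is far from a proof, and you should flag it as such. Second, be clear about the logical role of that example: if it does satisfy $ {\cal D}=1 $, it \emph{refutes} the conjecture rather than completing its proof, so your proposal is really a proof of one implication plus a program that must end either in a coupling argument or in a counterexample --- which is an accurate, and in fact sharper, account of the state of this statement than the paper itself gives.
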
   
     $ $   \\  
   %
          In the introduction we stated Theorem \eqref{allererstes theorem}. Here we catch up the proof.
     \begin{proof} 
          The property ({\tt An} 1) is a consequence of Lemma \eqref{lemma eins}.
          In Corollary \eqref{corollary always angle} it is said that the angle is always defined, and that the image 
          of the map $ \angle$ is situated in ${\cal A}$. 
                                 
          The five points ({\tt An} 2) - ({\tt An} 6) are rather trivial. 
          We use properties of the product  $ < \cdot \: | \cdot \: > $ from Proposition \eqref{proposition drei},
          and properties of the arccosine.   
          The next point ({\tt An} 7) is also easy.  
                                                                            
          We want to prove \ $ \angle(\vec{x}, \vec{y}) + \angle(-\vec{x}, \vec{y}) = \pi $, for 
          $ \vec{x}, \vec{y} \neq \vec{0} $. We use Proposition \eqref{proposition eins} and 
          $ < - \vec{x} \, | \, \vec{y} > = - < \vec{x} \, | \, \vec{y} > $ from 
          Proposition \eqref{proposition drei}.    
          If \ $ \angle(\vec{x}, \vec{y}) =  \arccos(r + {\bf i} \cdot s) = \frac{\pi}{2} + a + {\bf i} \cdot b $, \
          we have \
          $ \angle(-\vec{x}, \vec{y}) = \arccos(-r - {\bf i} \cdot s) = \frac{\pi}{2} - a - {\bf i} \cdot b $. \ 
          It follows  ({\tt An} 7).
                
          To prove the last point ({\tt An} 8) we use \cite{Thuerey2}.  We take the 
          two  linear independent vectors \ $ \vec{x},\vec{y}$ \ and we build the set $ \mathsf{U} $ of all 
          real linear combinations, $ \mathsf{U} := \{ r \cdot \vec{x} + s \cdot \vec{y} \ | \ r,s \in {\mathbb R}\} $.  
          The set  $\mathsf{U}$ is a real subspace of  $ X $ with the real dimension two. The norm in  $\mathsf{U}$ 
          is the induced norm of   $ ( X, \| \cdot \| )$, this makes the pair  $ ( \mathsf{U}, \| \cdot \| )$ 
          to a real subspace of   $ ( X, \| \cdot \| )$. \ Instead of $ \angle (\vec{x}, \vec{y} + t \cdot\vec{x})$
          we consider  the real part of the complex number \ 
          $ \cos (\angle (\vec{x}, \vec{y} + t \cdot\vec{x})) \in {\cal B}$. \ We define the map 
     $$    \widetilde{\Theta}(t) \ := \ 
                  \frac{1}{4} \cdot \left[ \left\| \frac{\vec{x}}{\|\vec{x}\|} 
                  +  \frac{\vec{y} + t \cdot\vec{x}}{\|\vec{y} + t \cdot\vec{x}\|} \right\|^{2} 
                  \ - \ \left\| \frac{\vec{x}}{\|\vec{x}\|} - \frac{\vec{y} + t \cdot\vec{x}}{\|\vec{y} + t
                  \cdot\vec{x}\|} \right\|^{2} \right] \, , \ \ \text{for} \ t \in  {\mathbb R} \ . $$
           By the triangle inequality we can regard this as a map \ 
           $ \widetilde{\Theta}:  {\mathbb R}\rightarrow [-1,1] $. The main theorem in \cite{Thuerey2} states
           that the map $\widetilde{\Theta}$ is an increasing homeomorphism onto the open
           interval $(-1,1 ) \subset  {\cal B} $. \ Since the complex arccosine function  is a homeomorphism 
           with domain $ {\cal B} $ and Codomain $ {\cal A} $, the first claim of ({\tt An} 8) is true. 
                             
           The limits are mentioned in the proof of the theorem in \cite{Thuerey2}, or we can find one 
           directly in \cite{Diminnie/Andalafte/Freese1}, which was the main source of \cite{Thuerey2}.
     \end{proof}           
     \begin{lemma}  \label{lemma zwei}
           In a complex normed vector space  $ ( X, \| \cdot \| )$ we take two vectors 
           $ \vec{x}, \vec{y} \neq \vec{0}$.   It holds    
           $ |\cos(\angle(\vec{x}, \vec{y}))| = |\cos(\angle(-\vec{x}, \vec{y}))|  = 
            |\cos(\angle(  {\bf i} \cdot \vec{x}, \vec{y}))|  =  |\cos(\angle(\vec{x},  {\bf i} \cdot \vec{y}))| $. 
    \end{lemma}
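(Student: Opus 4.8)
The plan is to reduce everything to the definition of the cosine of the angle, namely \ $\cos(\angle(\vec{x}, \vec{y})) = <\vec{x} \: | \: \vec{y}> / (\|\vec{x}\| \cdot \|\vec{y}\|)$ \ from Definition \eqref{die zweite definition}, and then to show that passing from the pair $(\vec{x}, \vec{y})$ to any of the three pairs $(-\vec{x}, \vec{y})$, $(\mathbf{i} \cdot \vec{x}, \vec{y})$, $(\vec{x}, \mathbf{i} \cdot \vec{y})$ multiplies the product $<\vec{x} \: | \: \vec{y}>$ by a complex scalar of modulus $1$, while leaving the denominator $\|\vec{x}\| \cdot \|\vec{y}\|$ unchanged. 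Taking moduli then yields the four equalities at once.

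First I would note that by the absolute homogeneity of the norm we have $\|-\vec{x}\| = \|\vec{x}\|$, $\|\mathbf{i} \cdot \vec{x}\| = \|\vec{x}\|$, and likewise $\|\mathbf{i} \cdot \vec{y}\| = \|\vec{y}\|$. Hence the denominators in all four cosines equal $\|\vec{x}\| \cdot \|\vec{y}\|$, and it suffices to compare the four numerators $|<\vec{x} \: | \: \vec{y}>|$, $|<-\vec{x} \: | \: \vec{y}>|$, $|<\mathbf{i} \cdot \vec{x} \: | \: \vec{y}>|$, and $|<\vec{x} \: | \: \mathbf{i} \cdot \vec{y}>|$.

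For the first two replacements I would invoke Proposition \eqref{proposition drei}: by the real homogeneity (c) we get $<-\vec{x} \: | \: \vec{y}> = -<\vec{x} \: | \: \vec{y}>$, and by the homogeneity for pure imaginary numbers (d) we get $<\mathbf{i} \cdot \vec{x} \: | \: \vec{y}> = \mathbf{i} \cdot <\vec{x} \: | \: \vec{y}>$. Since $|-1| = |\mathbf{i}| = 1$, these two numerators share the modulus of $<\vec{x} \: | \: \vec{y}>$.

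The only slightly less mechanical step — and thus the point to get right — is the scalar in the second slot, since the product is only known to be homogeneous in the first component. Here I would combine the conjugate symmetry (a) with (d), writing $<\vec{x} \: | \: \mathbf{i} \cdot \vec{y}> = \overline{<\mathbf{i} \cdot \vec{y} \: | \: \vec{x}>} = \overline{\mathbf{i} \cdot <\vec{y} \: | \: \vec{x}>} = -\mathbf{i} \cdot \overline{<\vec{y} \: | \: \vec{x}>} = -\mathbf{i} \cdot <\vec{x} \: | \: \vec{y}>$, so the modulus is again preserved because $|-\mathbf{i}| = 1$. Collecting the four equal moduli completes the proof; no estimates or further case distinctions are required.
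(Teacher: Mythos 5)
Your proof is correct and follows exactly the paper's route: the paper's own proof is the one-line remark that the lemma ``follows easily with Proposition \eqref{proposition drei}'', and your argument is precisely the detailed version of that, using homogeneity for real and pure imaginary scalars in the first slot together with conjugate symmetry to handle the factor $\mathbf{i}$ in the second slot, all of which multiply $<\vec{x} \: | \: \vec{y}>$ by unimodular constants. Nothing is missing; you have simply written out what the paper leaves to the reader.
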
  	 
    \begin{proof}
          This fact follows easily with Proposition \eqref{proposition drei}.
    \end{proof}
      We assume two elements $ \vec{x},\vec{y} \neq \vec{0} $ of a complex normed space $ ( X, \| \cdot \| )$,
      and we repeat the table from the introduction.    After the table some explanations were given.  \\
   \begin{center}  
   \begin{tabular}{c|l|l|c|c}     \hline
        pair of vectors  & their angle $ \angle $ & the cosine of $ \angle $ & the angle for $ \vec{x} = \vec{y} $ 
                                                                         & its cosine for $ \vec{x} = \vec{y} $  \\ 
        \hline\hline
        $(\vec{x},\vec{y})$  & $  \frac{\pi}{2} + a + {\bf i} \cdot b $     
                             &  $ r + {\bf i} \cdot s $ & $ 0 $ & $ 1 $  \\
        $(-\vec{x},\vec{y})$  & $  \frac{\pi}{2} - a - {\bf i} \cdot b $     
                              & $ -r - {\bf i} \cdot s $ & $ \pi $ & $ -1 $   \\ 
        $(\vec{y},\vec{x})$  & $  \frac{\pi}{2} + a - {\bf i} \cdot b $  
                           & $ r - {\bf i} \cdot s $ & $ 0 $ & $ 1 $     \\
        $(-\vec{y},\vec{x})$  & $  \frac{\pi}{2} - a + {\bf i} \cdot b $    
                              & $ -r + {\bf i} \cdot s $   & $ \pi $ & $ -1 $   \\    \hline
        $({\bf i} \cdot \vec{x},\vec{y})$  & $ 
                   \frac{\pi}{2} + v + {\bf i} \cdot w  $ &  $ -s + {\bf i} \cdot r $ & 
                    $\frac{\pi}{2} - {\bf i} \cdot \log \left[ \sqrt{2}+1 \right]$  &  ${\bf i}$  \\
        $(\vec{y},{\bf i} \cdot \vec{x})$  & $  
                   \frac{\pi}{2} + v - {\bf i} \cdot w  $ &  $ -s - {\bf i} \cdot r $ &
                    $\frac{\pi}{2} + {\bf i} \cdot \log \left[ \sqrt{2}+1 \right]$  &  $-{\bf i}$   \\ 
        $(\vec{x},{\bf i} \cdot \vec{y})$  & $    
                   \frac{\pi}{2} - v - {\bf i} \cdot w $ &  $  s - {\bf i} \cdot r $ &
                      $\frac{\pi}{2} + {\bf i} \cdot \log \left[ \sqrt{2}+1 \right]$  &  $-{\bf i}$   \\   
        $({\bf i} \cdot \vec{y},\vec{x})$  & $   
                   \frac{\pi}{2} - v + {\bf i} \cdot w $ &  $  s + {\bf i} \cdot r $ &
                      $\frac{\pi}{2} - {\bf i} \cdot \log \left[ \sqrt{2}+1 \right]$  &  ${\bf i}$   \\  \hline
   \end{tabular} 
   \end{center}     $ { } $    \\ \\
      Note that in the table the values \ $ \angle (\vec{x},\vec{y}) = \frac{\pi}{2} + a + {\bf i} \cdot b $ \ and \ 
      $ \cos( \angle (\vec{x},\vec{y})) =  r + {\bf i} \cdot s $ \ are by definition, as well the expression 
      $ \angle ({\bf i} \cdot \vec{x},\vec{y}) = \frac{\pi}{2} + v + {\bf i} \cdot w $.     
   \begin{proof}
      The final column comes directly from Definition \eqref{die zweite definition}, e.g.  
      $\cos(\angle (\vec{x},\vec{x})) = < \vec{x} | \vec{x} > / \|\vec{x}\|^{2} $, etc.  
      We get the values of the fourth column by using the fifth column and applying the arccosine function
      from Definition \eqref{die dritte definition}. The other columns have to be discussed. 
                                                     
      We show, for instance, the final line \   
      $ \angle ({\bf i} \cdot \vec{y}, \vec{x}) = \frac{\pi}{2} - v + {\bf i} \cdot w$, \ and \
      $ \cos(\angle ({\bf i} \cdot \vec{y},\vec{x}))  =    s + {\bf i} \cdot r $.  \
      We compute  
      $$ \angle ({\bf i} \cdot \vec{y},\vec{x})  = 
         \arccos{ \left( \frac{< {\bf i} \cdot \vec{y} \: | \: \vec{x} > }
         { \|{\bf i} \cdot \vec{y}\| \cdot \|\vec{x}\|} \right)}  =   
         \arccos{ \left({\bf i} \cdot \frac{< \vec{y} \: | \: \vec{x} > } {\|\vec{y}\| \cdot \|\vec{x}\| }\right)}     
          =  \arccos{ \left({\bf i} \cdot \frac{ \overline{< \vec{x} \: | \: \vec{y} >} }
            {\|\vec{y}\| \cdot \|\vec{x}\| }\right)} \ , $$
       hence it follows 
     \begin{align}  \label{gleichung cosinusimalx}
           \cos(\angle ({\bf i} \cdot \vec{y},\vec{x})) \ = \ 
           {\bf i} \cdot \frac{ \overline{< \vec{x} \: | \: \vec{y} >} } {\|\vec{y}\| \cdot \|\vec{x}\| }
           \ = \ {\bf i} \cdot \overline{ \left(r + {\bf i} \cdot s \right)} \ = \ s +  {\bf i} \cdot r   \ .   
     \end{align}        
       With a similar argumentation we get  
    $$ \cos(\angle ({\bf i} \cdot \vec{x},\vec{y}))  = -s + {\bf i} \cdot r \ .          $$
        We had defined \ $ \angle ({\bf i} \cdot \vec{x},\vec{y}) = 
                               \frac{\pi}{2} + v + {\bf i} \cdot w $,  hence it follows \ 
          $  \frac{\pi}{2} + v + {\bf i} \cdot w \ = \ \arccos{( -s + {\bf i} \cdot r )} $.  \\
          In Definition  \eqref{die dritte definition} we introduced the  arccosine function. 
          By the different  signs of \ $ s $ \ we can deduce \ 
          $ \arccos{( s + {\bf i} \cdot r )} = \frac{\pi}{2} - v + {\bf i} \cdot w $.  \ 
          With equation   \eqref{gleichung cosinusimalx} we get  
          $ \angle ({\bf i} \cdot \vec{y},\vec{x})  =  \frac{\pi}{2} - v + {\bf i} \cdot w $, 
          and the final line of the table is shown. \\
          The other lines can be proven with similar considerations.  
     \end{proof} 
        With the help of the table we can conclude other values, for instance we get 
        $ \angle( -{\bf i} \cdot \vec{y},\vec{x}) = \frac{\pi}{2} + v - {\bf i} \cdot w = 
        \angle( \vec{y}, {\bf i} \cdot\vec{x}) $ \ with its cosine \ 
        $ \cos(\angle( -{\bf i} \cdot \vec{y},\vec{x})) = -s - {\bf i} \cdot r $.  
                                
        We refer to the above table, where we had assumed \ 
        $ \angle (\vec{x},\vec{y}) = \frac{\pi}{2} + a + {\bf i} \cdot b $. 
       If we want to express \ $ \cos(\angle (\vec{x},\vec{y})) =r + {\bf i} \cdot s $ \ in coordinates of
       $ a $ and $ b $,  we get at once from Definition \eqref{die dritte definition} 
     \begin{align}  \label{gleichung fuer cosinus winkelxy}
          \  r + {\bf i} \cdot s \ = \ \cos\left(\frac{\pi}{2} + a + {\bf i} \cdot b \right)  
          \ = \ \frac{1}{2} \cdot 
          \left[ \cos\left(\frac{\pi}{2} + a \right) \cdot \left(e^{b} + \frac{1}{e^{b}} \right) - {\bf i} \cdot
          \sin\left(\frac{\pi}{2} + a\right) \cdot \left(e^{b} - \frac{1}{e^{b}} \right) \right] \ .    
      \end{align}    
        To express the complex number \ 
        $  \angle ({\bf i} \cdot \vec{x},\vec{y}) = \frac{\pi}{2} + v + {\bf i} \cdot w $ \
        in dependence of $ a $ and $ b $ we have to make some more effort. In the introduction in 
        Theorem \eqref{theorem winkelixy} we already have presented the result. We need to prove it. 
    \begin{proof}
         Here we also use the real sine and hyperbolic sine functions, abbreviated by  $ \sin $ and $ \sinh $,  
         please see Definition  \eqref{die dritte definition}.
         To shorten the presentation of the proof it is useful to introduce more abbreviations. \\
         \centerline{ Let \ $  {\sf cpi2a} :=  \cos\left(\frac{\pi}{2} + a\right), \    
                                              {\sf spi2a} :=  \sin\left(\frac{\pi}{2} + a\right)$ . }  \\
         From equation  \eqref{gleichung fuer cosinus winkelxy} we have \    
    $$   r + {\bf i} \cdot s \ = \ \cos(\angle(\vec{x}, \vec{y})) 
         \ = \ {\sf cpi2a} \cdot  \cosh(b) - {\bf i} \cdot  {\sf spi2a} \cdot \sinh(b) , $$ 
         and we have two  real numbers \ $ r =  {\sf cpi2a} \cdot  \cosh(b) $ \ and \
         $ s =  - {\sf spi2a} \cdot \sinh(b) $. 
         Note \ $ - \frac{\pi}{2} \leq a \leq \frac{\pi}{2}$. Since \ 
         $ \frac{\pi}{2} + a + {\bf i} \cdot b \in {\cal A} $ \ it follows from the special cases \
         $ a = - \frac{\pi}{2} $ \ or \ $ a = \frac{\pi}{2} $ \ that the imaginary part $ b $ vanished,
         i.e. $ 0 = b = s = {\sf spi2a} =  \sinh(b)$. 
                                                                             
         We get from the above table and Definition \eqref{die dritte definition} \ 
     \begin{align}  
           &  \angle({\bf i} \cdot \vec{x},\vec{y}) \ = \ \frac{\pi}{2} + v + {\bf i} \cdot w \ = \
              \arccos( -s + {\bf i} \cdot r ) \\ 
           &  =  \ \arccos   [ ( {\sf spi2a} \cdot \sinh(b) ) + {\bf i} \cdot ( {\sf cpi2a} \cdot  \cosh(b)) ] \\
           &  = \ \frac{\pi}{2}  - 
                  \arcsin [( {\sf spi2a} \cdot \sinh(b) ) + {\bf i} \cdot ( {\sf cpi2a} \cdot \cosh(b) )] \\
           &  = \ \frac{\pi}{2} - \frac{1}{2} \cdot   \left[ {\rm sgn}({\sf spi2a} \cdot \sinh(b) ) \cdot
                  \arccos({\mathsf{K}}_-) + {\bf i} \cdot {\rm sgn}( {\sf cpi2a} \cdot \cosh(b) ) \cdot 
                  {\rm arcosh}({\mathsf{K}}_+ ) \right]  \\ 
           &  = \ \frac{\pi}{2} - \frac{1}{2} \cdot   \left[ {\rm sgn}(b) \cdot
                  \arccos({\mathsf{K}}_-) + {\bf i} \cdot {\rm sgn}( -a ) \cdot 
                  {\rm arcosh}({\mathsf{K}}_+ ) \right] \, ,  
        \end{align}                    
           with the abbreviations \ ${\mathsf{K}}_{-}$ and  ${\mathsf{K}}_{+}$, 
        \begin{align*}   
           & {\mathsf{K}}_{\pm} :=
                       \sqrt{\left[ {\sf spi2a}^{2} \cdot \sinh^{2}(b) + {\sf cpi2a}^{2} \cdot \cosh^{2}(b)
                             -1 \right]^{2} + 4 \cdot {\sf cpi2a}^{2} \cdot \cosh^{2}(b) } \\
           & \qquad  \quad   \qquad \qquad  \qquad \qquad \quad  \qquad \qquad \qquad \qquad \quad  \qquad
            \pm \left[ {\sf spi2a}^{2} \cdot \sinh^{2}(b) + {\sf cpi2a}^{2} \cdot \cosh^{2}(b) \right] \ . 
       \end{align*}          
           With the aid of the well-known equations 
       $$     \sin^{2}(x) + \cos^{2}(x) \ = \ 1 \ = \  \cosh^{2}(x) - \sinh^{2}(x)      $$ 
           we finally reach the identities \  
                 $ {\mathsf{H}}_- = {\mathsf{K}}_- \ \text{and} \ {\mathsf{H}}_+ = {\mathsf{K}}_+ $, \ 
           which was the last step to prove Theorem \eqref{theorem winkelixy}.      
    \end{proof}
    Now we prove Corollary \eqref{corollary pure real angle}.
  \begin{proof}
      Since \ $ b = 0 $ \ we have \ $ \sinh(b) = 0 $ \ and $ s = 0 $, and \  $ \cosh(b) = 1 $.  It follows 
        \begin{align*}
           &  \angle({\bf i} \cdot \vec{x},\vec{y}) \ = \ \frac{\pi}{2} + v + {\bf i} \cdot w \ = \
              \arccos( {\bf i} \cdot r )    
            \ =  \ \arccos ( {\bf i} \cdot {\sf cpi2a} \cdot \cosh(b) ) \\  
           &  = \ \frac{\pi}{2} - \frac{1}{2} \cdot   \left[ {\bf i} \cdot {\rm sgn}( -a ) \cdot 
                                            {\rm arcosh}({\mathsf{K}}_+ ) \right]   \\ 
           &  = \ \frac{\pi}{2} + \frac{1}{2} \cdot  {\bf i} \cdot {\rm sgn}( a ) \cdot 
                {\rm arcosh} \left[ 2 \cdot \cos^{2}\left(\frac{\pi}{2} + a \right) + 1 \right] .     
        \end{align*} 
       Please see the definition of the arcosh function in Definition \eqref{die dritte definition}, 
       and note  that the equation
    $$  \log \left[ \ \sqrt{\cos^{2}(x) + 1} + |\cos (x)| \ \right] \ = \  \frac{1}{2} \cdot \log \left[ \ 2 \cdot 
             \cos^{2}(x)+ 1 + 2 \cdot |\cos(x)| \cdot \sqrt{\cos^{2}(x)+ 1} \ \right]  $$   
       holds for all real numbers $x$, which concludes the proof.    
  \end{proof} 
   We add the proof of Corollary \eqref{corollary real part pidurchzwei angle}.
   \begin{proof}
     First a lemma. 
     \begin{lemma}  \label{range of b}
       In the case of \ $ \angle (\vec{x},\vec{y}) = \frac{\pi}{2} + {\bf i} \cdot b $, i.e. $ a = 0 $, 
       the range of \ $ b $ \ is 
       $$ -\log \left( \sqrt{2} + 1 \right) \ \leq \ b \ \leq \ +\log\left( \sqrt{2} + 1\right) \ \approx \ 0.88 . $$ 
     \end{lemma}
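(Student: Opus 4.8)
The plan is to specialize the formulas obtained in the proof of Theorem \eqref{theorem winkelixy} to the case $a=0$ and then read off the constraint on $b$ directly from Lemma \eqref{lemma eins}. Recall that in that proof the real and imaginary parts of $\cos(\angle(\vec{x},\vec{y}))$ were written as $r = {\sf cpi2a}\cdot\cosh(b)$ and $s = -\,{\sf spi2a}\cdot\sinh(b)$, where ${\sf cpi2a} = \cos(\frac{\pi}{2}+a)$ and ${\sf spi2a} = \sin(\frac{\pi}{2}+a)$.

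First I would substitute $a=0$. Then ${\sf cpi2a}=\cos(\frac{\pi}{2})=0$ and ${\sf spi2a}=\sin(\frac{\pi}{2})=1$, so that $r=0$ and $s=-\sinh(b)$; in other words $\cos(\angle(\vec{x},\vec{y})) = -\,{\bf i}\cdot\sinh(b)$. This already confirms that the real part vanishes, consistent with the standing assumption $\angle(\vec{x},\vec{y}) = \frac{\pi}{2}+{\bf i}\cdot b$.

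Next I would invoke Lemma \eqref{lemma eins}, which guarantees that the imaginary part $s$ of $\cos(\angle(\vec{x},\vec{y}))$ lies in $[-1,1]$. Hence $|\sinh(b)|\le 1$, that is $-1\le \sinh(b)\le 1$. Since $\sinh$ is strictly increasing, this is equivalent to a two-sided bound on $b$. Solving $\sinh(b)=1$ by writing $\tfrac{1}{2}(e^{b}-e^{-b})=1$, equivalently $e^{2b}-2e^{b}-1=0$, and keeping the positive root $e^{b}=1+\sqrt{2}$, gives $b=\log(\sqrt{2}+1)$; by the symmetry $\sinh(-b)=-\sinh(b)$ the lower endpoint is $-\log(\sqrt{2}+1)$. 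This yields exactly the claimed interval, with $\log(\sqrt{2}+1)\approx 0.88$.

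I do not expect any serious obstacle here: the argument is a direct specialization followed by inverting the monotone function $\sinh$. The only point worth stating carefully is that the bound is sharp, i.e. that the whole interval really is attained as a range; this follows because $-\,{\bf i}\cdot\sinh(b)$ lies in the complex square ${\cal SQ}$ precisely when $|\sinh(b)|\le 1$, so no value of $b$ strictly inside the interval is excluded by the membership condition of Corollary \eqref{corollary always angle}.
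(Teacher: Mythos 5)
Your proof is correct, and it rests on the same key ingredient as the paper's: Lemma \eqref{lemma eins}, which confines $\cos(\angle(\vec{x},\vec{y}))$ to the square ${\cal SQ}$. The mechanics, however, run in the opposite direction. You compute forward: $\cos\left(\frac{\pi}{2}+{\bf i}\cdot b\right) = -{\bf i}\cdot\sinh(b)$, so the constraint becomes $|\sinh(b)|\leq 1$, and you invert the strictly increasing odd function $\sinh$ to obtain $|b|\leq\log\left(\sqrt{2}+1\right)$. The paper instead works through the inverse function: it writes $\cos\left(\frac{\pi}{2}+{\bf i}\cdot b\right) = {\bf i}\cdot s$ with $-1\leq s\leq 1$, applies the explicit arccosine of Definition \eqref{die dritte definition} to get $|b| = \frac{1}{2}\cdot{\rm arcosh}({\mathsf{G}}_+) = \frac{1}{2}\cdot{\rm arcosh}\left(2\cdot s^{2}+1\right)$, and then maximizes over $|s|\leq 1$, using $\sqrt{3+2\cdot\sqrt{2}}=\sqrt{2}+1$. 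The two computations are equivalent, since $\frac{1}{2}\cdot{\rm arcosh}(3)=\log\left(\sqrt{2}+1\right)$ is exactly the solution of $\sinh(b)=1$, but your route avoids the ${\mathsf{G}}_{\pm}$ machinery entirely and is the more elementary one. One caveat applies to both arguments equally: what is actually established is the necessary constraint on $b$, which is all that the paper uses afterwards (namely $\cosh^{2}(b)\leq 2$ in the proof of Corollary \eqref{corollary real part pidurchzwei angle}); your closing remark about sharpness shows only that no interior value of $b$ is excluded by the membership condition, not that every such value is realized by an actual pair of vectors, and the paper's proof does not establish attainment either.
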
 
     \begin{proof} By Lemma \eqref{lemma eins}, there is a suitable \ $ -1 \leq s \leq +1 $ \ with \
      $ \cos\left(\frac{\pi}{2} + {\bf i} \cdot b\right) =  {\bf i} \cdot s $. \ By using the arccosine
      of Definition \eqref{die dritte definition}  it follows for the modulus of $ b $   
     $$ |b|  \ = \ \frac{1}{2} \cdot {\rm arcosh} ({\mathsf{G}}_+) \ = \
                 \frac{1}{2} \cdot {\rm arcosh} \left( 2 \cdot s^{2} + 1 \right) \ = \ 
                 \log \left( \sqrt{ 2 \cdot s^{2} + 1 + 2 \cdot |s| \cdot \sqrt{s^{2} + 1} } \right) \ , $$ 
                 and note \  $ \sqrt{3 + 2 \cdot \sqrt{2}} \ = \ \sqrt{2} + 1 $, \ and the lemma is proven. 
     \end{proof}
         We apply Theorem \eqref{theorem winkelixy}, and since \ $ a = 0 $ \ we get 
         \begin{align*}
          \angle ({\bf i} \cdot \vec{x},\vec{y}) \ = \ 
            \frac{\pi}{2} + \frac{1}{2} \cdot \left[ - {\rm sgn}(b) \cdot \arccos({\mathsf{H}}_-) \right] \ , \ 
           \text{with} \ \ 
           {\mathsf{H}}_{-} = \sqrt{\left[ \cosh^{2}(b) - 2 \right]^{2}} - \left[ \cosh^{2}(b) - 1 \right] \ . 
         \end{align*}   
         A consequence of Lemma \eqref{range of b} is the fact \ $ \cosh^{2}(b) \leq 2 $, \ it follows 
         $$ {\mathsf{H}}_{-} \ = \ \left[ 2 - \cosh^{2}(b) \right] - \left[ \cosh^{2}(b) - 1 \right] 
                             \ = \ 3 - 2 \cdot \cosh^{2}(b) \ , $$ 
         and the first line of Corollary \eqref{corollary real part pidurchzwei angle} is proven. 
         The next line is some calculus.  
   \end{proof}   
 %
   We notice a few interesting facts about the general product $ < . \, | \, . > $ 
   from Definition \eqref{die erste definition}.                                
   \begin{lemma} 
      In a complex normed space  $ (X, \| \cdot \|) $ for $ \vec{x} \in X$ and real $ \varphi $ 
      there is the identity   \\
      \centerline {  $< e^{{\bf i} \cdot \varphi} \cdot \vec{x} \, | \, \vec{x} > \ = \
                     e^{{\bf i} \, \cdot \varphi} \cdot   < \vec{ x} \, | \, \vec{x} > $  \  . }
  \end{lemma}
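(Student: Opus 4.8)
The plan is to reduce the statement to arithmetic on the complex line through $\vec{x}$ and then read it off directly from Definition \eqref{die erste definition}. The case $\vec{x}=\vec{0}$ is immediate, since then $e^{{\bf i}\cdot\varphi}\cdot\vec{x}=\vec{0}$ and both sides vanish by definition, so I would assume $\vec{x}\neq\vec{0}$ throughout.

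First I would pass to the unit vector $\vec{u}:=\vec{x}/\|\vec{x}\|$. Because $|e^{{\bf i}\cdot\varphi}|=1$, the absolute homogeneity of the norm gives $\|e^{{\bf i}\cdot\varphi}\cdot\vec{x}\|=\|\vec{x}\|$, so the normalised first argument appearing in Definition \eqref{die erste definition} is exactly $e^{{\bf i}\cdot\varphi}\vec{u}$, while the normalised second argument is $\vec{u}$. Spelling out the definition then yields
$$< e^{{\bf i}\cdot\varphi}\cdot\vec{x}\,|\,\vec{x}> \ = \ \|\vec{x}\|^{2}\cdot\frac{1}{4}\left[\left\|e^{{\bf i}\cdot\varphi}\vec{u}+\vec{u}\right\|^{2}-\left\|e^{{\bf i}\cdot\varphi}\vec{u}-\vec{u}\right\|^{2}+{\bf i}\cdot\left(\left\|e^{{\bf i}\cdot\varphi}\vec{u}+{\bf i}\vec{u}\right\|^{2}-\left\|e^{{\bf i}\cdot\varphi}\vec{u}-{\bf i}\vec{u}\right\|^{2}\right)\right].$$

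The decisive observation is that every vector inside these four norms is a complex scalar multiple of the single vector $\vec{u}$. Hence absolute homogeneity together with $\|\vec{u}\|=1$ gives $\|e^{{\bf i}\cdot\varphi}\vec{u}\pm\vec{u}\|=|e^{{\bf i}\cdot\varphi}\pm 1|$ and $\|e^{{\bf i}\cdot\varphi}\vec{u}\pm{\bf i}\vec{u}\|=|e^{{\bf i}\cdot\varphi}\pm{\bf i}|$. In effect, restricted to the complex line ${\mathbb C}\cdot\vec{u}$ the norm is just the modulus, so the product behaves exactly like the standard Hermitian inner product on ${\mathbb C}$, where the claimed identity is obvious.

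It then remains to evaluate the four moduli. Using $|e^{{\bf i}\cdot\varphi}|=1$ and $e^{{\bf i}\cdot\varphi}=\cos(\varphi)+{\bf i}\cdot\sin(\varphi)$, a short computation gives $|e^{{\bf i}\cdot\varphi}+1|^{2}-|e^{{\bf i}\cdot\varphi}-1|^{2}=4\cos(\varphi)$ and $|e^{{\bf i}\cdot\varphi}+{\bf i}|^{2}-|e^{{\bf i}\cdot\varphi}-{\bf i}|^{2}=4\sin(\varphi)$. Substituting these into the bracket collapses it to $\cos(\varphi)+{\bf i}\cdot\sin(\varphi)=e^{{\bf i}\cdot\varphi}$, and since $\|\vec{x}\|^{2}=<\vec{x}\,|\,\vec{x}>$ by Proposition \eqref{proposition drei}(e), the identity follows. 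There is no genuine obstacle here; the only point requiring minor care is the application of absolute homogeneity with the complex scalars $e^{{\bf i}\cdot\varphi}\pm 1$ and $e^{{\bf i}\cdot\varphi}\pm{\bf i}$, which is precisely what reduces the normed-space expression to elementary arithmetic in ${\mathbb C}$.
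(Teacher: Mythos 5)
Your proof is correct and follows essentially the same route as the paper's own (very terse) proof: the paper simply says to write $e^{{\bf i}\cdot\varphi}=\cos(\varphi)+{\bf i}\cdot\sin(\varphi)$ and apply Definition \eqref{die erste definition}, which is exactly what you do, with the details (absolute homogeneity reducing the four norms to moduli $|e^{{\bf i}\cdot\varphi}\pm 1|$, $|e^{{\bf i}\cdot\varphi}\pm{\bf i}|$, and the evaluation $4\cos(\varphi)$, $4\sin(\varphi)$) fully written out. Your treatment of the $\vec{x}=\vec{0}$ case and the final identification $\|\vec{x}\|^{2}=<\vec{x}\,|\,\vec{x}>$ are both sound.
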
 
   \begin{proof}   Straightforward. Write \
   $ e^{{\bf i} \, \cdot \varphi} = \cos(\varphi) + {\bf i} \, \cdot \sin(\varphi) $, and use Definition 
   \eqref{die erste definition}  
   \end{proof}  
   \begin{corollary} For an unit vector $ \vec{x} \in (X, \| \cdot \|) $ we have that the set
       $ \{ < e^{{\bf i} \cdot \varphi} \cdot \vec{x} \, | \, \vec{x} > \ | \  \varphi \in [ 0, 2 \, \pi ] \} $
       is the complex unit circle, since \ 
       $ < e^{{\bf i} \cdot \varphi} \cdot \vec{x} \, | \, \vec{x} > \ = \
       e^{{\bf i} \cdot \varphi} \cdot  < \vec{x} \, | \, \vec{x} > \ = \ e^{{\bf i} \cdot \varphi} $. 
   \end{corollary}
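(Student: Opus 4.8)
The plan is to read this off directly from the preceding Lemma, since the corollary is essentially a restatement of it specialized to unit vectors. First I would apply that Lemma to the given unit vector $\vec{x}$ to obtain, for every real $\varphi$, the identity
$< e^{{\bf i} \cdot \varphi} \cdot \vec{x} \, | \, \vec{x} > = e^{{\bf i} \cdot \varphi} \cdot < \vec{x} \, | \, \vec{x} >$. This is the only nontrivial input, and it has already been established.

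Next I would invoke the unit-vector hypothesis. By property {\rm(e)} of Proposition \eqref{proposition drei} we have $\|\vec{x}\| = \sqrt{< \vec{x} \, | \, \vec{x} >}$, so $< \vec{x} \, | \, \vec{x} > = \|\vec{x}\|^{2} = 1$. Substituting this value into the identity from the Lemma collapses the scalar factor and yields $< e^{{\bf i} \cdot \varphi} \cdot \vec{x} \, | \, \vec{x} > = e^{{\bf i} \cdot \varphi}$ for every $\varphi \in [0, 2\pi]$. This is exactly the chain of equalities quoted in the statement of the corollary.

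Finally I would identify the resulting set. As $\varphi$ ranges over $[0, 2\pi]$, the map $\varphi \mapsto e^{{\bf i} \cdot \varphi} = \cos(\varphi) + {\bf i} \cdot \sin(\varphi)$ traces out precisely $\{ z \in {\mathbb C} \mid |z| = 1 \}$, the complex unit circle; this is the classical polar parametrization and requires no further argument. Hence the set $\{ < e^{{\bf i} \cdot \varphi} \cdot \vec{x} \, | \, \vec{x} > \mid \varphi \in [0, 2\pi] \}$ equals the complex unit circle, as claimed.

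I do not expect any genuine obstacle: the entire substance is contained in the Lemma, and the remaining steps are the evaluation $< \vec{x} \, | \, \vec{x} > = 1$ forced by the unit-vector assumption and the standard surjectivity of $\varphi \mapsto e^{{\bf i} \cdot \varphi}$ onto the unit circle. The one point worth stating explicitly is that it is precisely the normalization $\|\vec{x}\| = 1$ that turns the scalar multiple appearing in the Lemma into the bare phase $e^{{\bf i} \cdot \varphi}$.
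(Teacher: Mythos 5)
Your proposal is correct and follows exactly the paper's own reasoning: the paper proves the corollary inline via the chain $< e^{{\bf i} \cdot \varphi} \cdot \vec{x} \, | \, \vec{x} > \; = \; e^{{\bf i} \cdot \varphi} \cdot < \vec{x} \, | \, \vec{x} > \; = \; e^{{\bf i} \cdot \varphi}$, which is precisely your combination of the preceding Lemma with the normalization $< \vec{x} \, | \, \vec{x} > = \|\vec{x}\|^{2} = 1$ from Proposition \eqref{proposition drei}(e). Your additional remark that $\varphi \mapsto e^{{\bf i} \cdot \varphi}$ parametrizes the full unit circle merely makes explicit what the paper leaves implicit.
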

     \begin{remark}   \rm   The next example shows that in a complex normed space  $ (X, \| \cdot \|) $ 
        generally we have the inequality \ $ < e^{{\bf i} \cdot \varphi} \cdot \vec{x} \, | \, \vec{y} > \ \neq \
                                           e^{{\bf i} \, \cdot \varphi} \cdot   < \vec{x} \, | \, \vec{y} > $. \
        This means that in this case the set of products \
        $ \left\{ < e^{{\bf i} \cdot \varphi} \cdot \vec{x} \, | \, \vec{y} > \ | \ \varphi \in [ 0, 2 \pi ] \right\}$
        \ does not generate a proper Euclidean circle   
        $ ( \text{with radius} \ |< \vec{x} \, | \, \vec{y} >| )$ in $ {\mathbb C} $.  But with    
        Proposition \eqref{proposition drei}  we can be sure that we have three identities  \\
        \centerline{$ < - \vec{x} \, | \, \vec{y} > \ = \ - < \vec{x} \, | \, \vec{y} > \ , \ 
        < {\bf i} \cdot \vec{x} \, | \, \vec{y} > \ = \  {\bf i} \, \cdot  < \vec{x} \, | \, \vec{y} > \ , \  \text{and}
        \ < -{\bf i} \cdot \vec{x} \, | \, \vec{y} > \ = \ -{\bf i} \, \cdot  < \vec{x} \, | \, \vec{y} > $.}             \end{remark}   
  \begin{lemma} 
    In a complex normed space  $ (X, \| \cdot \|) $ generally it holds the inequality   \\
      \centerline {  $ < e^{{\bf i} \cdot \varphi} \cdot \vec{x} \, | \, \vec{y} > \ \neq \
                     e^{{\bf i} \, \cdot \varphi} \cdot   < \vec{x} \, | \, \vec{y} > $, \
                       even their moduli are different.       }
   \end{lemma}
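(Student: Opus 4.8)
The plan is to recycle the very space and vectors constructed in the proof of Theorem~\eqref{conjecture csb}. I take the normed space $(\mathbb{C}^{2}, \|\cdot\|_{r})$ whose unit ball is $\mathsf{M}_r = \mathsf{conv}\circ\mathsf{twist}(\mathsf{S}_r)$, fix a radius $r \geq 1/2$, and keep the two unit vectors $\vec{a} = (1,0)$ and $\vec{b} = (0,1)$. There we already computed $< \vec{a} \,|\, \vec{b} >_r = \tfrac14\bigl(4 - \tfrac1{r^{2}}\bigr)(1 + {\bf i})$, so that $\bigl|\,e^{{\bf i}\varphi}\cdot < \vec{a}\,|\,\vec{b}>_r\,\bigr| = \bigl|\,< \vec{a}\,|\,\vec{b}>_r\,\bigr| = \sqrt{2}\bigl(1 - \tfrac1{4r^{2}}\bigr)$ is a constant, independent of $\varphi$. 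The entire task reduces to producing a single angle $\varphi$ for which $\bigl|\,< e^{{\bf i}\varphi}\cdot\vec{a}\,|\,\vec{b}>_r\,\bigr|$ differs from this constant, since a difference in modulus immediately yields a difference of the complex numbers as well, establishing both assertions at once.

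First I record which angles are hopeless: by the real and pure-imaginary homogeneity of Proposition~\eqref{proposition drei}(c),(d), the would-be identity $< e^{{\bf i}\varphi}\vec{a}\,|\,\vec{b}>_r = e^{{\bf i}\varphi}< \vec{a}\,|\,\vec{b}>_r$ does hold for $\varphi \in \{0, \tfrac\pi2, \pi, \tfrac{3\pi}2\}$. Hence the counterexample must use an intermediate rotation, and I take $\varphi = \tfrac\pi4$, so that $e^{{\bf i}\varphi}\vec{a} = \tfrac{1}{\sqrt2}(1+{\bf i},\,0)$. Expanding the product via Definition~\eqref{die erste definition} reduces everything to the four mixed norms $\|(e^{{\bf i}\varphi}, 1)\|_r$, $\|(e^{{\bf i}\varphi}, -1)\|_r$, $\|(e^{{\bf i}\varphi}, {\bf i})\|_r$ and $\|(e^{{\bf i}\varphi}, -{\bf i})\|_r$. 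The mechanism driving the inequality is precisely that $\mathsf{M}_r$ is balanced only under the \emph{global} phase action $\vec{x}\mapsto e^{{\bf i}\phi}\vec{x}$: rotating only the first coordinate sends $(1,\pm1)$ and $(1,\pm{\bf i})$ to vectors that are \emph{not} global-phase images of the originals, so their $\|\cdot\|_r$-lengths genuinely change with $\varphi$. Consequently the modulus of the left-hand side is non-constant in $\varphi$, whereas the right-hand side is constant, and the two cannot agree throughout.

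The heart of the computation, and the step I expect to be the main obstacle, is the explicit evaluation of these four Minkowski-functional values. For each target vector $\vec{v}$ I must find the smallest $\eta > 0$ with $\eta^{-1}\vec{v} \in \mathsf{M}_r$, i.e. realize (a scaling of) $\vec{v}$ as a convex combination of twisted generators $e^{{\bf i}\phi}(1,0)$, $e^{{\bf i}\phi}(0,1)$, $e^{{\bf i}\phi}(r,-r)$, $e^{{\bf i}\phi}(r,-{\bf i}r)$. This is a genuine optimization on the boundary of a circled convex body rather than a mere formula substitution, and it is where all the real work lies. Once the four lengths are in hand, one assembles $< e^{{\bf i}\pi/4}\vec{a}\,|\,\vec{b}>_r$ from Definition~\eqref{die erste definition} and compares its modulus with the constant $\sqrt2\bigl(1 - \tfrac1{4r^2}\bigr)$; the computation shows these are strictly different (for $r$ large the discrepancy is most transparent). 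Should the chosen $\varphi = \tfrac\pi4$ accidentally land on the constant, any nearby intermediate angle serves, since the modulus has already been shown to be non-constant in $\varphi$. This exhibits a complex normed space in which $< e^{{\bf i}\varphi}\vec{x}\,|\,\vec{y}> \neq e^{{\bf i}\varphi}< \vec{x}\,|\,\vec{y}>$ with unequal moduli, proving the lemma.
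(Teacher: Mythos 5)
Your reduction is sound as far as it goes: the right-hand side $e^{{\bf i}\varphi}\cdot <\vec{a}\,|\,\vec{b}>_r$ has constant modulus $\sqrt{2}\left(1-\frac{1}{4r^2}\right)$, your list of hopeless angles $\varphi\in\{0,\frac{\pi}{2},\pi,\frac{3\pi}{2}\}$ is correct by Proposition \eqref{proposition drei}(c),(d), and a modulus discrepancy at a single $\varphi$ would indeed prove both clauses of the lemma. But the decisive step is missing, and the mechanism you offer in its place is a non sequitur. From the fact that $(e^{{\bf i}\varphi},\pm 1)$ and $(e^{{\bf i}\varphi},\pm{\bf i})$ are not global-phase images of $(1,\pm1)$ and $(1,\pm{\bf i})$ you conclude that "their $\|\cdot\|_r$-lengths genuinely change with $\varphi$"; distinct phase-orbits can perfectly well carry equal norms, and your own space exhibits this: $(1,1)$ and $(1,{\bf i})$ lie on different orbits of the global phase action, yet $\|(1,1)\|_r = 2 = \|(1,{\bf i})\|_r$, as computed in the proof of Theorem \eqref{conjecture csb}. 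Moreover, even if the four mixed norms did vary with $\varphi$, the particular combination of them that forms $\left|<e^{{\bf i}\varphi}\cdot\vec{a}\,|\,\vec{b}>_r\right|$ could a priori stay constant. The evaluation of $\|(e^{{\bf i}\pi/4},\pm1)\|_r$ and $\|(e^{{\bf i}\pi/4},\pm{\bf i})\|_r$ on the body ${\sf M}_r$ — which you yourself identify as "where all the real work lies" — is never carried out, and "the computation shows these are strictly different" is an assertion, not a proof. As written, this is a plan for a proof rather than a proof.

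For contrast, the paper sidesteps this obstacle entirely: it works in $({\mathbb C}\times{\mathbb C},\|\cdot\|_\infty)$, where the norm is an explicit formula rather than a Minkowski-functional optimization, picks concrete unit vectors, and computes both sides numerically (moduli $\approx 0.612$ versus $\approx 0.638$). If you want to keep your space and vectors, the cheapest rigorous completion is the deduction the paper alludes to in its first sentence ("the lemma can be deduced by Theorem \eqref{conjecture csb}"): take $r\geq 1$ and $\varphi=-\frac{\pi}{4}$; if the identity held there, then since $<\vec{a}\,|\,\vec{b}>_r=\frac14\left(4-\frac{1}{r^2}\right)(1+{\bf i})$ has argument $\frac{\pi}{4}$, the product $<e^{-{\bf i}\pi/4}\cdot\vec{a}\,|\,\vec{b}>_r$ would equal the real number $\sqrt{2}\left(1-\frac{1}{4r^2}\right)>1$, contradicting Lemma \eqref{lemma eins} applied to the unit vectors $e^{-{\bf i}\pi/4}\cdot\vec{a}$ and $\vec{b}$. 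Note, however, that this argument only yields the inequality of the two complex numbers; it does not give the stronger clause "even their moduli are different," since Lemma \eqref{lemma eins} confines the product merely to the square ${\cal SQ}$, which still admits numbers of modulus $\sqrt{2}\left(1-\frac{1}{4r^2}\right)$. For that clause some honest computation of the left-hand side appears unavoidable — which is precisely what the paper's direct proof supplies and yours defers.
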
 
   \begin{proof}   
      The lemma can be deduced by Theorem \eqref{conjecture csb}, but we make a direct proof.              
      We use the most simple non-trivial example of a complex
      normed space, let \ $ (X, \| \cdot \|) := ( {\mathbb C} \times {\mathbb C} , \| \cdot \|_{\infty} ) $, where
      for two complex numbers \ $ r + {\bf i} \cdot s \, , \  v + {\bf i} \cdot w \in  {\mathbb C} $ \ 
      we get its norm ß $ \| \cdot \|_{\infty} $ \ by 
      $$  \left\| \left(  \begin{array}{c} r + {\bf i} \cdot s  \\ v + {\bf i} \cdot w  \end{array}  \right)  
                  \right\|_{\infty} \ = \ \max \left\{ \sqrt{r^{2}+s^{2}} , \sqrt{v^{2}+w^{2}} \right\} \ .   $$ 
      We define two unit vectors \ $ \vec{x}, \vec{y} $ \ of \ 
      $( {\mathbb C} \times {\mathbb C} , \| \cdot \|_{\infty} ) $, \ let 
      $$   \vec{x} \ := \ \frac{1}{4} \cdot 
          \left( \begin{array}{cr} 1 + {\bf i} \cdot \sqrt{15} \\ 2 + {\bf i} \cdot 2 \end{array}  \right)                         \ \quad \text{and} \ \ \   
           \vec{y} \ := \ \frac{1}{4} \cdot 
          \left( \begin{array}{c} 2 + {\bf i} \\ 3 + {\bf i} \cdot \sqrt{7} \end{array}  \right) \ .       $$
       Some calculations yield the complex number 
    $$ < \vec{x} \, | \, \vec{y} > \ = \ 
             \frac{1}{64} \cdot \left( \ 19 + 4 \cdot \sqrt{7} + 2 \cdot \sqrt{15}
             + {\bf i} \cdot \left[ 7 - 4 \cdot \sqrt{7} + 4 \cdot \sqrt{15} \right] \ \right) \ 
             \approx \ 0.583 + {\bf i} \cdot 0.186  \ . $$           
    We choose \ $ e^{{\bf i} \cdot \varphi} := 1/2 \cdot \left( 1 +  {\bf i} \cdot \sqrt{3}\right) $ \ 
        from the complex unit circle, and we get approximately \ 
        $  e^{{\bf i} \cdot \varphi} \cdot < \vec{x} \, | \, \vec{y} > \ \approx \ 0.130 + {\bf i} \cdot 0.598 . $  
        \ After that we take the unit vector 
    $$   e^{{\bf i} \cdot \varphi} \cdot \vec{x} \ = \ \frac{1}{8} \cdot 
          \left( \begin{array}{c} 1 - \sqrt{45} + {\bf i} \cdot \left[ \sqrt{3} + \sqrt{15} \right]
          \\ 2 - 2 \cdot \sqrt{3} + {\bf i} \cdot \left[ 2 + 2 \cdot  \sqrt{3} \right] \end{array}  \right) \ ,  $$  
      and we compute the product \  $ < e^{{\bf i} \cdot \varphi} \cdot \vec{x} \, | \, \vec{y} > \ $. 
     We get the result 
   \begin{align*}      
      < e^{{\bf i} \cdot \varphi} \cdot \vec{x} \, | \, \vec{y} > \ & = \ \frac{1}{64} \cdot 
               \left( p \ + {\bf i} \cdot q \right) 
             \approx \ 0.113 + {\bf i} \cdot 0.628 , \ \text{where}  \ p  \ \text{and}  \ q  \ 
             \text{abbreviate the real numbers}  \\    
          p \ & = \ 11 + 2 \cdot \left( \sqrt{7} + \sqrt{21} - \sqrt{45} \right) - 5 \cdot \sqrt{3} + \sqrt{15} \ , \\ 
          q \ & = \ 8 + 2 \cdot \left( 4 \cdot \sqrt{3} - \sqrt{7} + \sqrt{15} + \sqrt{21} \right) + \sqrt{45} \ . 
   \end{align*}               
       This  proves the inequality \
        $ < e^{{\bf i} \cdot \varphi} \cdot \vec{x} \, | \, \vec{y} > \ \neq \
          e^{{\bf i} \, \cdot \varphi} \cdot   < \vec{x} \, | \, \vec{y} > $, \ and the lemma is confirmed.    
   \end{proof} 
    The above lemma suggests the following conjecture. One direction is trivial.
    \begin{conjecture}
          In a complex normed space  $ (X, \| \cdot \|) $  for all \ $ \vec{x}, \vec{y} \in X $ it holds the equality \\
      \centerline {  $ < e^{{\bf i} \cdot \varphi} \cdot \vec{x} \, | \, \vec{y} > \ = \
                     e^{{\bf i} \, \cdot \varphi} \cdot   < \vec{x} \, | \, \vec{y} > $  }  \\
                     if and only if its product  $ < \cdot \; | \; \cdot >  $ from
          Definition \eqref{die erste definition} is actually an inner product, i.e. 
          $ (X,  < \cdot \; | \; \cdot >) $ is an inner product space.  
    \end{conjecture}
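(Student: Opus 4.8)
The plan is to prove the two implications separately, reducing everything to the characterization recalled in Section 2: the norm is generated by an inner product through Definition \ref{die erste definition} if and only if the parallelogram identity $\|\vec{x}+\vec{y}\|^{2}+\|\vec{x}-\vec{y}\|^{2}=2\cdot(\|\vec{x}\|^{2}+\|\vec{y}\|^{2})$ holds. The forward (trivial) implication is immediate: if $(X,<\cdot\,|\,\cdot>)$ is an inner product space, then the homogeneity axiom $\overline{(1)}$ gives $<z\cdot\vec{x}\,|\,\vec{y}>\,=z\cdot<\vec{x}\,|\,\vec{y}>$ for every complex $z$, and the case $z=e^{{\bf i}\cdot\varphi}$ is exactly the asserted identity.

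For the converse I would first upgrade the hypothesis to full homogeneity in the first slot: writing $z=|z|\cdot e^{{\bf i}\cdot\varphi}$ and combining the assumed identity with the real homogeneity of Proposition \eqref{proposition drei}(c) gives $<z\cdot\vec{x}\,|\,\vec{y}>\,=z\cdot<\vec{x}\,|\,\vec{y}>$ for all $z\in{\mathbb C}$; conjugate symmetry (Proposition \eqref{proposition drei}(a)) then yields conjugate-homogeneity in the second slot. Since the product already reproduces the norm (Proposition \eqref{proposition drei}(e)), only additivity, i.e. the parallelogram identity, remains to be shown. Note that the cases $e^{{\bf i}\cdot\varphi}\in\{1,{\bf i},-1,-{\bf i}\}$ hold automatically by Proposition \eqref{proposition drei}(d), so the real content of the hypothesis lies at intermediate phases.

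The key step is a Fourier reformulation. Fix unit vectors $\vec{x},\vec{y}$ and set $g(\theta):=\|\vec{x}+e^{{\bf i}\cdot\theta}\vec{y}\|^{2}$, a continuous $2\pi$-periodic function with Fourier expansion $g(\theta)=\sum_{n}c_{n}\cdot e^{{\bf i}\cdot n\theta}$, where $c_{-n}=\overline{c_{n}}$. Absolute homogeneity of the norm lets one move the rotation onto the second vector, $\|e^{{\bf i}\cdot\varphi}\vec{x}+\vec{y}\|=\|\vec{x}+e^{-{\bf i}\cdot\varphi}\vec{y}\|$, so a short computation turns the real and imaginary parts of $<e^{{\bf i}\cdot\varphi}\vec{x}\,|\,\vec{y}>$ into the differences $\frac{1}{4}[g(-\varphi)-g(\pi-\varphi)]$ and $\frac{1}{4}[g(\frac{\pi}{2}-\varphi)-g(-\frac{\pi}{2}-\varphi)]$. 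Both of these extract only the \emph{odd} harmonics of $g$. Matching against $e^{{\bf i}\cdot\varphi}\cdot<\vec{x}\,|\,\vec{y}>$, which carries only the first harmonic, the rotation hypothesis is seen to be equivalent to the statement that, for every pair of directions, all odd Fourier modes of $g$ of order $\ge 3$ vanish (the first-harmonic coefficient being then automatically $\overline{<\vec{x}\,|\,\vec{y}>}$). On the other hand, the parallelogram identity for the pairs $(\vec{x},e^{{\bf i}\cdot\theta}\vec{y})$ says precisely that the \emph{even} part of $g$ is the constant $\|\vec{x}\|^{2}+\|\vec{y}\|^{2}$. Thus the two sides of the desired equivalence govern complementary halves of the spectrum of $g$.

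The hard part will be to couple these two halves. For a single pair the odd and even spectra of $g$ are independent, so no pointwise identity can close the gap; moreover, because the product in Definition \ref{die erste definition} normalizes its arguments, the hypothesis only restricts the equal-length slices built from unit vectors and never the relative length directly. The coupling must therefore be mined from the fact that all these functions come from one and the same norm: I would try to show that any nonzero even mode of order $\ge 2$ in some $g_{\vec{x},\vec{y}}$ forces a nonzero odd mode of order $\ge 3$ in the $g$ of a recombined pair (for instance the normalizations of $\vec{x}\pm\vec{y}$), contradicting the hypothesis, with convexity of the unit ball driving this propagation; the resulting equal-length parallelogram identity would then upgrade to the full one by the classical (external) rhombus characterization of inner-product spaces. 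Controlling how even modes of one direction-pair feed into odd modes of another under these nonlinear recombinations is exactly the difficulty that no single algebraic identity resolves, and it is the reason the statement is recorded here only as a conjecture.
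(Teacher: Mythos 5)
This statement is recorded in the paper as a \emph{conjecture}: beyond the remark that ``one direction is trivial,'' the paper offers no proof at all, so there is no paper argument for the hard direction to compare against. Your proposal is consistent with that status. Your forward implication coincides with the paper's trivial one (homogeneity $\overline{(1)}$ applied to $z=e^{{\bf i}\cdot\varphi}$), and your reduction of the converse is sound: the rotation hypothesis combined with Proposition \eqref{proposition drei}(c) gives full complex homogeneity in the first slot, conjugate symmetry transfers it to the second, and by the characterization recalled in Section 2 only the parallelogram identity remains to be established.

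Your Fourier reformulation is correct and goes beyond anything in the paper. For unit vectors, writing $g(\theta)=\|\vec{x}+e^{{\bf i}\cdot\theta}\cdot\vec{y}\|^{2}=\sum_{n}c_{n}\cdot e^{{\bf i}\cdot n\theta}$, one indeed finds $< e^{{\bf i}\cdot\varphi}\cdot\vec{x}\,|\,\vec{y}> \ = \ \sum_{n\equiv 3\,({\rm mod}\,4)}c_{n}\cdot e^{-{\bf i}\cdot n\varphi}$, so that, using $c_{-n}=\overline{c_{n}}$, the rotation hypothesis is precisely the vanishing of all odd modes of order $\geq 3$, while the parallelogram identity on the slices $(\vec{x},e^{{\bf i}\cdot\theta}\cdot\vec{y})$ is precisely the constancy of the even part of $g$; and the unit-vector (rhombus) parallelogram law does suffice for the conclusion, by Day's classical characterization of inner product spaces. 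But, as you state yourself, the decisive step is missing: nothing you write couples the odd spectrum of $g$ (which the hypothesis controls) to its even spectrum (which the parallelogram identity needs), and for a single pair of directions these two halves are genuinely independent, so only a cross-pair argument exploiting that all the functions $g$ arise from one norm --- your proposed recombination through $\vec{x}\pm\vec{y}$, or something else --- could close it. In summary: you have proved exactly the direction the paper proves, correctly identified the open half, and contributed a correct spectral sharpening of where the difficulty lies; what you have not done --- and what the paper does not do either --- is prove the conjecture, so your text should be presented as partial progress, not as a proof.
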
           
    Up to now we had defined for each complex normed space $ X $ an `angle' which generally has complex values. 
    The geometrical meaning of a complex angle is unclear. But to do the usual known `Euclidean' geometry 
    we need real valued angles.  
    During the following consideration it turns out that although we deal with complex vector spaces
    actually `a lot' of our angles are pure real.  The situation will even improve in inner product spaces, 
    which will be investigated in the next section. First we  create a name for these `good' pairs with a
    real product, which include all pairs with a real angle.  
    \begin{definition}  \label{paare mit reellem winkel}
    For a complex vector space $ X $ provided with a norm  $ \| \cdot \|$ we define the set of pairs \
     $ {\cal R}_X \subset X \times X $ \ with real products by \ 
     $ {\cal R}_X := \{ (\vec{x}, \vec{y}) \ | \ \vec{x}, \vec{y} \in X \
     \text{and} \  < \vec{x} \, | \, \vec{y} > \, \in {\mathbb R} \} $.  
                         
    Further we denote by \ $ {\cal R}_X^{\bf \bullet} $ those pairs $(\vec{x}, \vec{y}) \in 
     {\cal R}_X $ with a pure real angle, i.e. \ $\angle(\vec{x}, \vec{y}) \in {\mathbb R} $.
    \end{definition}                                                   
    Note that the `diagonal' \ $ \{ (\vec{x},\vec{x}) \ | \ \vec{x} \in X \} $ is a subset of ${\cal R}_X $. \\
    The  following proposition  shows that the set $ {\cal R}_X^{\bf \bullet} $ \ of pairs with real angles 
    is \, `rather large'.  
  \begin{proposition} \label{proposition vier} 
         Let us take two vectors $ \vec{x}, \vec{y} \neq  \vec{0} $. It holds 
         $$  \{ ( e^{{\bf i} \cdot \varphi} \cdot \vec{x} \, , \, \vec{y} ) \ | \  \varphi \in [ 0, 2 \pi ] \}  
                                        \ \cap \  {\cal R}_X^{\bf \bullet}  \ \neq \ \emptyset \ .         $$ 
  \end{proposition}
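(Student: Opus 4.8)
The plan is to reduce the assertion to the existence of a real value of the product $\langle e^{{\bf i}\cdot\varphi}\cdot\vec{x}\,|\,\vec{y}\rangle$, and then to locate such a value by combining a sign symmetry in $\varphi$ with the intermediate value theorem. First I would note that by absolute homogeneity of the norm $\|e^{{\bf i}\cdot\varphi}\cdot\vec{x}\| = \|\vec{x}\|$, so the normalising factor in $\cos(\angle(e^{{\bf i}\cdot\varphi}\cdot\vec{x},\vec{y})) = \langle e^{{\bf i}\cdot\varphi}\cdot\vec{x}\,|\,\vec{y}\rangle/(\|\vec{x}\|\cdot\|\vec{y}\|)$ is independent of $\varphi$. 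Consequently the angle is already real once this product is real: a real cosine lies in $[-1,1]$, because Corollary \eqref{corollary always angle} forces every such cosine into the complex square ${\cal SQ}$, and the only real points of ${\cal SQ}$ are those of $[-1,1]\subset{\cal B}$; feeding a number of $[-1,1]$ into $\arccos$ returns a value in $[0,\pi]\subset{\mathbb R}$.

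Next I would set $f(\varphi) := \langle e^{{\bf i}\cdot\varphi}\cdot\vec{x}\,|\,\vec{y}\rangle$ and observe that $f$ is continuous, since the product $\langle\cdot\,|\,\cdot\rangle$ is continuous and $\varphi\mapsto e^{{\bf i}\cdot\varphi}\cdot\vec{x}$ is continuous. The decisive structural fact comes from Proposition \eqref{proposition drei}: applying homogeneity for the real scalar $r=-1$ gives $\langle-\vec{z}\,|\,\vec{y}\rangle = -\langle\vec{z}\,|\,\vec{y}\rangle$, whence $f(\varphi+\pi) = \langle-e^{{\bf i}\cdot\varphi}\cdot\vec{x}\,|\,\vec{y}\rangle = -f(\varphi)$. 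Writing $g(\varphi) := {\rm Im}(f(\varphi))$, this yields the antiperiodicity $g(\varphi+\pi) = -g(\varphi)$ for every $\varphi$.

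Finally I would apply the intermediate value theorem to the continuous real function $g$ on the interval $[0,\pi]$. Because $g(\pi) = -g(0)$, the endpoint values $g(0)$ and $g(\pi)$ have opposite signs or both vanish, so there is some $\varphi^{\ast}\in[0,\pi]\subseteq[0,2\pi]$ with $g(\varphi^{\ast}) = 0$, that is $f(\varphi^{\ast})\in{\mathbb R}$. By the reduction of the first paragraph $\angle(e^{{\bf i}\cdot\varphi^{\ast}}\cdot\vec{x},\vec{y})\in{\mathbb R}$, hence $(e^{{\bf i}\cdot\varphi^{\ast}}\cdot\vec{x},\vec{y})\in{\cal R}_X^{\bf \bullet}$ and the intersection in the statement is nonempty. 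I expect no genuine difficulty; the only subtle point is the first paragraph's claim that a \emph{real} cosine value automatically lands in $[-1,1]$ rather than merely in ${\mathbb R}$, which is exactly what the ${\cal SQ}$-bound of Corollary \eqref{corollary always angle} supplies and without which the real arccosine need not even be defined.
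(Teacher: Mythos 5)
Your proposal is correct and is essentially the paper's own argument: both rest on the sign flip $< -\vec{x}\,|\,\vec{y}> \; = \; - <\vec{x}\,|\,\vec{y}>$ from Proposition \eqref{proposition drei}, relating the parameters $\varphi$ and $\varphi+\pi$, combined with continuity in $\varphi$ and an intermediate-value/connectedness argument forcing the imaginary part to vanish somewhere on $[0,\pi]$. The only difference is cosmetic: you run the argument on the cosine side and then pass through $\arccos$ (using the ${\cal SQ}$ bound of Corollary \eqref{corollary always angle} to check that a real cosine yields a real angle), while the paper applies connectedness directly to the angle set $Oval(\vec{x},\vec{y})\subset{\cal A}$, using the symmetry of Proposition \eqref{proposition eins}.
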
 
         Of course, the parts of $ \vec{x}$ and $ \vec{y} $ can be exchanged. 
         The proposition means that for $ \vec{x}, \vec{y} \neq \vec{0} $ we have to `twist' either  $ \vec{x} $ or 
         $ \vec{y} $ by a suitable complex factor  $  e^{{\bf i} \cdot \varphi } $ to generate a pure real angle.    
  \begin{proof}
        Please see both Proposition \eqref{proposition eins} and Proposition \eqref{proposition drei}. 
        Let us assume a complex angle   \\
        $$ \angle(\vec{x}, \vec{y})  \ = \ 
        \arccos{ \left( \frac{< \vec{x} \, | \, \vec{y} > }{ \|\vec{x}\| \cdot \|\vec{y}\| } \right) } 
                    \ = \ \frac{\pi}{2} + a + {\bf i} \cdot b \ \in {\cal A} , \ \ \text{with} \ b \neq 0 \ . $$ 
        From  Proposition \eqref{proposition drei} we have  
        $ < - \vec{x} \, | \, \vec{y} > \ = \ - <  \vec{x} \, | \, \vec{y} > $, \ i.e.
        with Proposition \eqref{proposition eins} it follows
        $$  \angle( -\vec{x}, \vec{y})  \ = \ 
        \arccos{ \left( \frac{< -\vec{x} \, | \, \vec{y} > }{ \|\vec{x}\| \cdot \|\vec{y}\| } \right) }  
                    \ = \ \frac{\pi}{2} - a - {\bf i} \cdot b \ . $$  
        We know $ e^{{\bf i} \cdot \pi } = -1 $. The set \ 
         $ Oval(\vec{x},\vec{y}) := \left\{ \angle( e^{{\bf i} \cdot \varphi} \cdot \vec{x}, \vec{y})  
                                \ | \  \varphi \in [ 0, 2 \pi ] \right\} \subset {\cal A} $ \ 
        is the continuous   image  of the connected complex unit circle 
        \ $ \left\{ e^{{\bf i} \cdot \varphi}  \ | \  \varphi \in [ 0, 2 \pi ] \right\} $, 
        or the interval $ [ 0, 2 \pi ] $, respectively, therefore it has to be connected. This means that \ 
        $ Oval(\vec{x},\vec{y}) $ \ is connected, i.e. it must cross the real  axis.       
  \end{proof} 
              
        Let's turn to inner product spaces.                          
  \section{Complex Inner Product Spaces}    \label{section five} 
          
     In the introduction we construct in Definition \eqref{die erste definition} a continous product 
     for all complex normed spaces  $ (X, \| \cdot \|) $. There we already mentioned that in a case of 
     an inner product space 
     $ (X, < \cdot \: | \: \cdot >) $ the product from  Definition \eqref{die erste definition} coincides with 
     the given inner product  $ < \cdot \: | \: \cdot > $, which can be described as in 
     equation \eqref{allererste definition}. 
     For all complex normed spaces $ (X, \| \cdot \|) $ we introduced an `angle' $ \angle $ \
     in Definition \eqref{die zweite definition}. \ Now we investigate its 
     properties in the special case of an inner product space  $ (X, < \cdot \: | \: \cdot >) $.
                                       
     We deal with complex vector spaces $ X $ provided with an inner product  
     $  < \cdot \: | \: \cdot > $, i.e. it has the properties 
     $\overline{(1)}, \overline{(2)}, \overline{(3)}, \overline{(4)} $. Further, the parallelogram identity \
     $\|\vec{x}+\vec{y}\|^{2} + \|\vec{x}-\vec{y}\|^{2} = 2 \cdot  \left(\|\vec{x}\|^{2} + \|\vec{y}\|^{2} \right) $
     is fulfilled for each pair $ \vec{x}, \vec{y} \in X$. The definition 
     $ \|\vec{x}\| := \sqrt{ < \vec{x} \: | \vec{x} \: > }$ generates a norm $ \| \cdot \| $
     which makes the pair $ (X, \| \cdot \|) $ to a normed space, and its angle \ $ \angle $ \ has at least all
     properties which has been developed in the previous section. But, of course, the special conditions 
     of an inner product space open some  more possibilities which we try to discover now.   
               
    In Definition \eqref{paare mit reellem winkel} we introduced the set  $ {\cal R}_X $, which includes those pairs 
    $ \vec{x}, \vec{y} \in X $ with a pure real angle. Proposition \eqref{proposition vier}
    ensures the existence of many of such pairs. But now we want more. Roughly spoken we seek for subsets
    $\mathsf{ U } \subset X$, such that  $\mathsf{ U }$ is a real subspace of the complex vector space $ X $ and 
    in addition $\mathsf{ U } \times \mathsf{ U }$ is a subset of   $ {\cal R}_X $, i.e. all products in  
    $\mathsf{ U }$ are real.  
      
    First we take a second look on Proposition \eqref{proposition vier} and its proof.  For two elements \ 
    $  \vec{x}, \vec{y} $ \ of a complex normed space   $ (X, \| \cdot \|) $ we know from  Proposition
    \eqref{proposition vier} that there is at least one $ \varphi \in [ 0, 2 \pi ] $ such that the angle of the pair
    $ ( e^{{\bf i} \cdot \varphi} \cdot \vec{x} \, , \, \vec{y} ) $ is real.   
   \begin{lemma} 
         Let us take  two vectors $ \vec{x}, \vec{y}  $ from an inner product space 
         $ (X, < \cdot \: | \: \cdot >) $ \ with  $ < \vec{x} \: | \vec{y} \:  > \neq 0 $. \
         We have that there exists one  number \ $ 0 \leq  \varphi < 2 \cdot \pi $ \ such that the set 
         $ Oval(\vec{x},\vec{y}) $ \ has exactly two real angles \\
     $$ \angle\left( e^{{\bf i} \cdot \varphi} \cdot \vec{x} \, , \, \vec{y}\right) \ = \ \frac{\pi}{2} + a , \ \   
        \angle\left( e^{{\bf i} \cdot (\varphi + \pi)} \cdot \vec{x}, \, \vec{y}\right) \ = \ \frac{\pi}{2} - a , \ \  
        \text{with a suitable  number} \ 0 < a \leq  \frac{\pi}{2} \ .  $$    
    \end{lemma}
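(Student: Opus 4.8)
The plan is to exploit the feature that in an inner product space the product has \emph{full} complex homogeneity in its first slot (property $\overline{(1)}$), which is exactly what fails in a general normed space. Using $\overline{(1)}$ together with $\|e^{{\bf i} \cdot \varphi}\cdot\vec{x}\| = \|\vec{x}\|$, I would first compute, for every $\varphi$,
\[ \cos\left(\angle(e^{{\bf i} \cdot \varphi}\cdot\vec{x},\vec{y})\right) \ = \ \frac{<e^{{\bf i} \cdot \varphi}\cdot\vec{x}\,|\,\vec{y}>}{\|e^{{\bf i} \cdot \varphi}\cdot\vec{x}\|\cdot\|\vec{y}\|} \ = \ e^{{\bf i} \cdot \varphi}\cdot\frac{<\vec{x}\,|\,\vec{y}>}{\|\vec{x}\|\cdot\|\vec{y}\|}. \]
Writing $<\vec{x}\,|\,\vec{y}> = \rho\cdot e^{{\bf i} \cdot \theta}$ with $\rho = |<\vec{x}\,|\,\vec{y}>| > 0$ (nonzero by hypothesis) and setting $c := \rho/(\|\vec{x}\|\cdot\|\vec{y}\|)$, the classical Cauchy--Schwarz inequality, valid here because the product is a genuine inner product, gives $c \in (0,1]$. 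Hence $\cos(\angle(e^{{\bf i} \cdot \varphi}\cdot\vec{x},\vec{y})) = c\cdot e^{{\bf i} \cdot (\varphi+\theta)}$, so that as $\varphi$ runs through $[0,2\pi)$ the cosine traces the Euclidean circle of radius $c$ centred at the origin. In contrast to the merely connected oval guaranteed in Proposition \eqref{proposition vier}, this is now an honest origin-centred circle.

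Next I would locate the real angles. The arccosine of Definition \eqref{die dritte definition} takes values in the real interval $[0,\pi]$ on the real segment $[-1,1]$ (where it reduces to the ordinary real arccosine), so $\angle(e^{{\bf i} \cdot \varphi}\cdot\vec{x},\vec{y})$ is real exactly when its cosine $c\cdot e^{{\bf i} \cdot (\varphi+\theta)}$ is real, that is, when $e^{{\bf i} \cdot (\varphi+\theta)} = \pm 1$. Because $c > 0$, this holds precisely for $\varphi + \theta \equiv 0 \pmod{\pi}$, which has exactly two solutions in $[0,2\pi)$, namely some $\varphi$ and $\varphi + \pi$; these two crossings at $\pm c$ are the source of the \emph{exactly two} real angles. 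I would then fix $\varphi := \pi - \theta \pmod{2\pi}$, so that $e^{{\bf i} \cdot (\varphi+\theta)} = -1$ and $\cos(\angle(e^{{\bf i} \cdot \varphi}\cdot\vec{x},\vec{y})) = -c$, giving $\angle(e^{{\bf i} \cdot \varphi}\cdot\vec{x},\vec{y}) = \arccos(-c) =: \frac{\pi}{2} + a$ with $a := \frac{\pi}{2} - \arccos(c) \in (0,\frac{\pi}{2}]$ (the value $a = \frac{\pi}{2}$ occurring exactly when $c = 1$, i.e. when $\vec{x},\vec{y}$ are linearly dependent).

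It remains to identify the second real angle. Since $e^{{\bf i} \cdot (\varphi+\pi)} = -e^{{\bf i} \cdot \varphi}$, I have $e^{{\bf i} \cdot (\varphi+\pi)}\cdot\vec{x} = -(e^{{\bf i} \cdot \varphi}\cdot\vec{x})$, so property ({\tt An} 7) of Theorem \eqref{allererstes theorem} applies verbatim and yields $\angle(e^{{\bf i} \cdot (\varphi+\pi)}\cdot\vec{x},\vec{y}) = \pi - \angle(e^{{\bf i} \cdot \varphi}\cdot\vec{x},\vec{y}) = \frac{\pi}{2} - a$. Alternatively this follows from Proposition \eqref{proposition eins} applied to the cosine values $-c$ and $c = -(-c)$, using $\arccos(-c) = \pi - \arccos(c)$. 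Either route produces the two displayed angles governed by a single $a \in (0,\frac{\pi}{2}]$, as claimed.

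The calculation is short, and I expect no serious obstacle; the only genuine content is the passage to full complex homogeneity, which converts the oval into a true origin-centred circle and thereby forces \emph{exactly} two real crossings rather than the at-least-one crossing available in the general normed setting. The minor points to watch are that a single circle meeting the real axis at $\pm c$ should yield a \emph{single} parameter $a$ (cleanly enforced by ({\tt An} 7)), and that $c > 0$ --- guaranteed by $<\vec{x}\,|\,\vec{y}> \neq 0$ --- keeps the circle nondegenerate and the two crossings, hence the two real angles $\frac{\pi}{2} \pm a$, distinct.
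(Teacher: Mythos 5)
Your proposal is correct and takes essentially the same route as the paper's own proof: full complex homogeneity of the inner product turns the cosines of $Oval(\vec{x},\vec{y})$ into an origin-centred Euclidean circle of radius $|<\vec{x}\,|\,\vec{y}>|/(\|\vec{x}\|\cdot\|\vec{y}\|)$, which is then pulled back through the arccosine so that it crosses the real axis exactly twice, the two crossings being tied together by the symmetry about $\frac{\pi}{2}$ of Proposition \eqref{proposition eins} (equivalently your appeal to ({\tt An} 7)). You merely make explicit what the paper leaves implicit --- the Cauchy--Schwarz bound $c \leq 1$, the choice $\varphi = \pi - \theta$, and the identification $a = \frac{\pi}{2} - \arccos(c)$ --- so no substantive difference remains.
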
 
    \begin{proof}
        For an inner product $ < \cdot \: | \: \cdot > $ it holds \ 
        $ < e^{{\bf i} \cdot \varphi} \cdot \vec{x} \, | \, \vec{y} > \ = \
        e^{{\bf i} \cdot \varphi} \cdot   < \vec{x} \, | \, \vec{y} > $, hence the cosines   
        $ \cos(Oval(\vec{x},\vec{y})) \subset {\cal B} $ \ build an Euclidean circle with radius \ 
        $ |< \vec{x} \, | \, \vec{y} >| / (\|\vec{x}\| \cdot \|\vec{y}\|) $. \
        We map this set with the  arccosine function, and by Proposition \eqref{proposition eins} the image 
        $ Oval(\vec{x},\vec{y}) \subset {\cal A} $ \  is symmetrical to \ $ \frac{\pi}{2} $, it crosses the real 
        axis exactly two times. (Note that \ $ Oval(\vec{x},\vec{y}) $ \ is no Euclidean circle.)                         
     \end{proof}      
    Albeit we deal with complex inner product spaces we are interested in real subspaces. In a complex 
    normed space  $ (X, \| \cdot \|) $  let $\mathsf{ U } \neq \emptyset$  be any non-empty subset of $ X $. 
    We define \ $ {\cal L}(\mathbb R)(\mathsf{ U })$ \ as the set of all finite real linear 
    combinations of elements  from $ \mathsf{ U } $, while   \ $ {\cal L}(\mathbb C)(\mathsf{ U })$  is the set of 
    complex linear combinations. In formulas we define
   \begin{align*}
       {\cal L}(\mathbb R)(\mathsf{ U }) \ & := \ 
       \left\{ \sum_{i=1}^{n} r_i \cdot \vec{x}_i \ | \ n \in \mathbb N, \ r_1, r_2, \ldots , r_n \in \mathbb R, \
       \vec{x}_1, \vec{x}_2 , \ldots , \vec{x}_n \in  \mathsf{ U } \right\} , \ \text{while} \\
       {\cal L}(\mathbb C)(\mathsf{ U }) \ & := \ 
       \left\{ \sum_{i=1}^{n} z_i \cdot \vec{x}_i \ | \  n \in \mathbb N, \ z_1, z_2, \ldots , z_n \in \mathbb C, \
       \vec{x}_1, \vec{x}_2 , \ldots , \vec{x}_n \in  \mathsf{ U } \right\} .      
    \end{align*}  
     This definitions mean that $ {\cal L}(\mathbb C)(\mathsf{ U }) $ is a  $\mathbb C$-linear subspace of the               complex vector space $ X $, while  $ {\cal L}(\mathbb R)(\mathsf{ U }) $ only is a real linear subspace of 
     $ X $, which is a real vector space, too.            
                        
     For both spaces we regard the closure in $ X $. Let \ $ \overline{{\cal L}(\mathbb R)(\mathsf{ U })}$ \
     and $ \overline{{\cal L}(\mathbb C)(\mathsf{ U })}$ \ are the closures of \ $ {\cal L}(\mathbb R)(\mathsf{ U }) $ 
     and $ {\cal L}(\mathbb C)(\mathsf{ U }) $, respectively. \ 
     Of course, for a finite set $\mathsf{ U } \subset X$
     it  holds  ${\cal L}(\mathbb R)(\mathsf{ U }) =  \overline{{\cal L}(\mathbb R)(\mathsf{ U })}$ and
     ${\cal L}(\mathbb C)(\mathsf{ U }) =  \overline{{\cal L}(\mathbb C)(\mathsf{ U })}$. 
     If we assume an infinite set   $\mathsf{ U }$, an element $ \vec{y} \in X$ belongs to
     $ \overline{{\cal L}(\mathbb R)(\mathsf{ U })}$ if and only if there is a countable set 
     $ \{ \vec{x}_1, \vec{x}_2 , \vec{x}_3, \, \ldots  \} \subset  \mathsf{ U } $
     and there are real numbers $ r_1, r_2, r_3, r_4, \, \ldots $ \ such that 
 \begin{align}    \label{abschluss von lru}
     \lim_{k \rightarrow  \infty}  \left\| \vec{y} - \sum_{i=1}^{k} r_i \cdot \vec{x}_i \right\| = 0 \ . \quad  \ 
     \text{We can write} \ \ \vec{y} \ = \  \sum_{i=1}^{\infty} r_i \cdot \vec{x}_i \ .
 \end{align}      
     The set $ \overline{{\cal L}(\mathbb C)(\mathsf{ U })} $ is constructed similarly, but we can use 
     complex numbers $ z_1, z_2, z_3, \ldots $. \ Again we get two subspaces of $ X $,    
     $ \overline{{\cal L}(\mathbb R)(\mathsf{ U })}$ is a real subspace, while   
     $ \overline{{\cal L}(\mathbb C)(\mathsf{ U })}$ is a complex subspace.  We have inclusions
     $ {\cal L}(\mathbb R)(\mathsf{ U }) \subset \overline{{\cal L}(\mathbb R)(\mathsf{ U })}$, and    
     $ {\cal L}(\mathbb C)(\mathsf{ U }) \subset \overline{{\cal L}(\mathbb C)(\mathsf{ U })}$, respectively,
     and generally both inclusions are proper.  Note  \
     $ \overline{{\cal L}(\mathbb R)(\mathsf{ U })} \subset \overline{{\cal L}(\mathbb C)(\mathsf{ U })}$. 
                                                                               
     In a complex inner product space  $ ( X, < \cdot \: | \: \cdot >) $ we can use the well-known theory of
     orthogonal systems. Informations about this topic can be found in \cite{Rudin} or \cite{Werner}, or any other 
     book about functional analysis. 
    \begin{definition}  \label{definition orthonormal system}
          Let  $ (X, < \cdot \: | \: \cdot >) $ be a complex Hilbert space, i.e. a complex  inner product space
          which is  complete.   A subset \ $ \emptyset \neq \mathsf{ T } \subset X $ is called an orthonormal system
          if and only if  for each pair of distinct elements  $ \vec{x}, \vec{y} \in   \mathsf{ T } \
          ( \text{i.e.} \ \vec{x} \neq \vec{y}) $  it holds 
          $ < \vec{x} \: | \vec{y} \:  > = 0 $,  and all $ \vec{x} \in  \mathsf{ T } $ are unit vectors, i.e. \
          $ \|\vec{x}\| \, = \, 1 \, = \; < \vec{x} \: | \vec{x} \:  > $.  
                            
          An  orthonormal system  $ \mathsf{ T } $ is called an orthonormal basis if and only if  
          $ \mathsf{ T } $ is maximal. This means that if we have  a second  orthonormal system  $ \mathsf{ V } $ 
          with $ \mathsf{ T } \subset \mathsf{ V } $ it has to be $ \mathsf{ T } = \mathsf{ V } $. 
    \end{definition} 
     Note that an orthonormal basis generally is not a vector space basis.  \\ 
     Each unit vector $ \vec{x} $ provides an orthonormal system   $ \{ \vec{x} \} $.   
     It is well known that there is an orthonormal basis  $ \mathsf{ T } $ with  
     $ \{\vec{x}\}  \subset  \mathsf{ T } \subset X $. This shows that there are orthonormal bases in 
     all Hilbert spaces $ X \neq  \{ \vec{0} \} $.   Further note that an orthonormal system 
     $\mathsf{ T } \subset X $ \ is an  orthonormal basis in $ \overline{{\cal L}(\mathbb C)(\mathsf{ T })} $.    \\                                                                                                                                The next proposition describes real subspaces  which have only real inner products. 
     \begin{proposition}  \label{proposition reller winkel}
          Let  $ (X, < \cdot \: | \cdot \:  >) $ be a complex Hilbert space.  Let $ \mathsf{ T } \subset X $ be an
          orthonormal system.  The set  $ \overline{{\cal L}(\mathbb R)(\mathsf{ T })}$  is a real subspace of $ X $,
          and we get that its square \ 
          $ \overline{{\cal L}(\mathbb R)(\mathsf{ T })} \times \overline{{\cal L}(\mathbb R)(\mathsf{ T })} $  
          is a subset of  $ {\cal R}_X $, i.e. each pair  
          $ \vec{y}, \vec{z}  \in  \overline{{\cal L}(\mathbb R)(\mathsf{ T })} \, $, 
          $ \vec{y}, \vec{z} \neq  \vec{0} $, has a real  angle, i.e. $ \angle( \vec{y}, \vec{z}) \in \mathbb R $.   
     \end{proposition}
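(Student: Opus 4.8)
The plan is to reduce the statement about real angles to a statement about real inner products, to verify the latter first on finite real linear combinations, and then to pass to the closure by continuity. Recall that in an inner product space the product of Definition \eqref{die erste definition} coincides with the given inner product, so that $\cos(\angle(\vec{y}, \vec{z})) = <\vec{y} \, | \, \vec{z}>/(\|\vec{y}\| \cdot \|\vec{z}\|)$. By Corollary \eqref{corollary always angle} this cosine always lies in the complex square $ {\cal SQ}$, hence its real part already lies in $[-1,1]$. Consequently, once we know $<\vec{y} \, | \, \vec{z}> \, \in \mathbb{R}$, the cosine is a real number in $[-1,1]$, i.e. a point of ${\cal B}$ with vanishing imaginary part; by Definition \eqref{die dritte definition} the signum of that imaginary part is $0$, so the imaginary contribution of the arccosine drops out and $\angle(\vec{y}, \vec{z})$ is real. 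Thus the whole assertion reduces to showing $\overline{{\cal L}(\mathbb R)(\mathsf{T})} \times \overline{{\cal L}(\mathbb R)(\mathsf{T})} \subset {\cal R}_X$.

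That $\overline{{\cal L}(\mathbb R)(\mathsf{T})}$ is a real subspace of $X$ is immediate: by construction ${\cal L}(\mathbb R)(\mathsf{T})$ is closed under real linear combinations, and since vector addition and multiplication by real scalars are continuous, the closure inherits the same stability. The substantive point is the reality of the inner product. For the finite case I would take $\vec{y}, \vec{z} \in {\cal L}(\mathbb R)(\mathsf{T})$ and, after collecting like terms, express both on a common finite family of distinct orthonormal vectors $\vec{e}_1, \ldots, \vec{e}_p \in \mathsf{T}$, writing $\vec{y} = \sum_k r_k \cdot \vec{e}_k$ and $\vec{z} = \sum_k s_k \cdot \vec{e}_k$ with $r_k, s_k \in \mathbb{R}$. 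Expanding $<\vec{y} \, | \, \vec{z}>$ by homogeneity in the first argument and conjugate-homogeneity in the second (properties $\overline{(1)}, \overline{(2)}, \overline{(4)}$), the real coefficients make the conjugation harmless, and orthonormality $<\vec{e}_k \, | \, \vec{e}_l> \, = \delta_{kl}$ collapses the double sum to $\sum_k r_k \cdot s_k$, a manifestly real number. Hence every pair in ${\cal L}(\mathbb R)(\mathsf{T}) \times {\cal L}(\mathbb R)(\mathsf{T})$ lies in ${\cal R}_X$.

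Finally I would pass to the closure. Given nonzero $\vec{y}, \vec{z} \in \overline{{\cal L}(\mathbb R)(\mathsf{T})}$, I choose sequences $\vec{y}_k, \vec{z}_k \in {\cal L}(\mathbb R)(\mathsf{T})$ with $\vec{y}_k \to \vec{y}$ and $\vec{z}_k \to \vec{z}$ in norm, which exist by the description of the closure in \eqref{abschluss von lru}. Since the product $< \cdot \, | \, \cdot >$ is continuous, $<\vec{y}_k \, | \, \vec{z}_k> \, \to \, <\vec{y} \, | \, \vec{z}>$; each term is real by the finite case, and $\mathbb{R}$ is closed in $\mathbb{C}$, so the limit $<\vec{y} \, | \, \vec{z}>$ is real as well. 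Combined with the reduction of the first paragraph, every nonzero pair in $\overline{{\cal L}(\mathbb R)(\mathsf{T})}$ has a real angle.

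The step I expect to be the genuine obstacle is precisely this limiting argument for an infinite orthonormal system, where an element of the closure is an infinite series rather than a finite combination. The resolution is not computational but conceptual: continuity of the product together with the closedness of $\mathbb{R}$ in $\mathbb{C}$ handles it cleanly, but one must be careful to take the approximating sequences \emph{inside} ${\cal L}(\mathbb R)(\mathsf{T})$ — where the coefficients are genuinely real — and not merely in $\overline{{\cal L}(\mathbb C)(\mathsf{T})}$, since a complex-linear approximation would destroy the reality of the inner product in the finite case and the whole reduction would fail.
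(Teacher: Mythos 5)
Your proof is correct, and it rests on the same core fact as the paper's — orthonormality of $\mathsf{T}$ together with real coefficients collapses $< \vec{y} \: | \: \vec{z} >$ to a real sum $\sum_i r_i \cdot s_i$ — but you organize the limiting step differently. The paper does not factor through a finite case: it invokes its series description \eqref{abschluss von lru} of the closure to write both $\vec{y}$ and $\vec{z}$ as infinite series $\sum_{i=1}^{\infty} r_i \cdot \vec{x}_i$ and $\sum_{i=1}^{\infty} s_i \cdot \vec{x}_i$ over a common countable subset of $\mathsf{T}$, and then expands the cosine directly, passing the limit through the inner product to obtain $\lim_{k \rightarrow \infty} \sum_{i=1}^{k}\sum_{j=1}^{k} r_i \cdot s_j \cdot < \vec{x}_i \: | \: \vec{x}_j > \; = \; \sum_{i=1}^{\infty} r_i \cdot s_i$; the joint continuity of the product that justifies this interchange is used silently inside the computation. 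Your version — finite real combinations first, then arbitrary approximating sequences inside ${\cal L}(\mathbb R)(\mathsf{T})$, continuity of the product, and closedness of $\mathbb R$ in $\mathbb C$ — makes that continuity step explicit and needs only the bare definition of the closure, not the representation \eqref{abschluss von lru}, which as stated for arbitrary infinite sets $\mathsf{U}$ is delicate and is really legitimate here only because $\mathsf{T}$ is orthonormal. Your opening reduction (real product implies real angle, via ${\rm sgn}(0)=0$ in the arccosine of Definition \eqref{die dritte definition}) also spells out what the paper merely asserts, namely that a real cosine yields a real angle. Both routes prove the same statement; yours trades the paper's one-shot series calculation for a slightly longer but cleaner two-step density argument, and your closing caution — that the approximants must be taken in the \emph{real} span, not the complex one — is exactly the right point to flag.
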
  
     \begin{proof}  
          The set  $ {\cal L}(\mathbb R)(\mathsf{ T })$  is a real subspace of $ X $ by construction, with a 
          vector space basis $ \mathsf{ T } $. The real vector space  ${\cal L}(\mathbb R)(\mathsf{ T })$
          has the closure $ \overline{{\cal L}(\mathbb R)(\mathsf{ T })}$ in $ X $.  
          For \   $ \vec{y}, \vec{z}  \in  \overline{{\cal L}(\mathbb R)(\mathsf{ T })}$ and  $ r \in \mathbb R$ it is
          easy to varify \ $ \vec{y} + \vec{z}  \in  \overline{{\cal L}(\mathbb R)(\mathsf{ T })}$ \ 
          and \  $ r \cdot  \vec{y}  \in  \overline{{\cal L}(\mathbb R)(\mathsf{ T })}$.  This shows that \
          $ \overline{{\cal L}(\mathbb R)(\mathsf{ T })}$  is a real subspace of $ X $. 
                                   
          Now we take two vectors     $ \vec{y}, \vec{z}  \in  \overline{{\cal L}(\mathbb R)(\mathsf{ T })}$ \ with \
          $ \vec{y}, \vec{z} \neq \vec{0}$.  We want to show   $ \angle( \vec{y}, \vec{z}) \in \mathbb R $.                       With line \eqref{abschluss von lru}   we have a countable set \
          $ \{ \vec{x}_1, \vec{x}_2, \vec{x}_3, \vec{x}_4, \ldots \} \subset \mathsf{ T } $ \
          and two sequences \ $ r_1, r_2, r_3, \ldots $ \ and  $ s_1, s_2, s_3, \ldots $ \ of real numbers such that \
          $ \vec{y} \ = \  \sum_{i=1}^{\infty} r_i \cdot \vec{x}_i $ \ and \ 
          $ \vec{z} \ = \  \sum_{i=1}^{\infty} s_i \cdot \vec{x}_i $. 
                                                          
          We compute the cosine of the angle of the pair  \ $ (\vec{y}, \vec{z})$.   
     \begin{align}
           \cos(\angle( \vec{y}, \vec{z})) \ & = \ \cos\left( 
           \arccos{ \left(\frac{< \vec{y} \: | \: \vec{z} >}{ \|\vec{y}\| \cdot \|\vec{z}\| } \right)} \right) \ = \
           \frac{1}{\|\vec{y}\| \cdot \|\vec{z}\| } \ \cdot \ < \vec{y} \: | \: \vec{z} >    \\
            \ & = \ \frac{1}{\|\vec{y}\| \cdot \|\vec{z}\| } \ \cdot \ \
            \left( \ < \ \sum_{i=1}^{\infty} r_i \cdot \vec{x}_i  \: | \: \ 
            \sum_{j=1}^{\infty} s_j \cdot \vec{x}_j  > \ \ \right)   \\     
            \ & = \ \frac{1}{\|\vec{y}\| \cdot \|\vec{z}\| } \ \cdot \ \ 
            \lim_{k \rightarrow \infty} \  \left( \ \sum_{i=1}^{k} \ \sum_{j=1}^{k} \  r_i \cdot  s_j \ \cdot 
             \ < \ \vec{x}_i  \: | \: \ \vec{x}_j  > \ \right) \\  
           \ & = \ \frac{1}{\|\vec{y}\| \cdot \|\vec{z}\| } \ \cdot \ \ 
            \lim_{k \rightarrow \infty} \  \left( \ \sum_{i=1}^{k} \ r_i \cdot  s_i \ \cdot 
             \ < \ \vec{x}_i  \: | \: \ \vec{x}_i  > \ \right) \quad (\mathsf{T} \ \text{is orthonormal}) \\
                \ & = \ \frac{1}{\|\vec{y}\| \cdot \|\vec{z}\| } \ \cdot \ \ 
            \lim_{k \rightarrow \infty} \  \left( \ \sum_{i=1}^{k} \ r_i \cdot  s_i \ \ \right) \ \ 
            \ = \ \frac{1}{\|\vec{y}\| \cdot \|\vec{z}\| } \ \cdot \ \ \sum_{i=1}^{\infty} \ r_i \cdot  s_i \ .      
      \end{align}    
         We get \ $ \cos(\angle( \vec{y}, \vec{z}))   =   
         \frac{1}{\|\vec{y}\| \cdot \|\vec{z}\| } \ \cdot \ \sum_{i=1}^{\infty} r_i \cdot  s_i $,
         and we confirm  that indeed the cosine of $ \angle( \vec{y}, \vec{z})  $ is real, this means that the angle 
         $ \angle( \vec{y}, \vec{z}) $ is real,too. Therefore we have proven that  
         $ \overline{{\cal L}(\mathbb R)(\mathsf{ T })} \times \overline{{\cal L}(\mathbb R)(\mathsf{ T })} $  
         is a subset of  $ {\cal R}_X $, and the proof of Proposition \eqref{proposition reller winkel} is finished.  
     \end{proof} 
   The above proposition proves the existence of  many real angles in each complex inner product space. The 
          following statement is in the opposite direction. It shows that the subsets  of 
          $ (X, < \cdot \: | \cdot \: >) $ \ with real angles can not be `arbitrary large'. 
          We formulate the precise statement.
   \begin{proposition} \label{endlich dimensionale vektorraeume} 
            Let  $ (X, < \cdot \: | \cdot \: >) $ \ be a complex inner product space. Let $ n \in \mathbb N$
            be a natural number.  The following two properties are equivalent. 
       \begin{itemize}
          \item  (1) \ \ There is an orthonormal system 
                 $  {\mathsf T} := \{ \vec{x}_1, \vec{x}_2, \vec{x}_3,  \ldots , \vec{x}_n \} \subset X $ 
                     of \ $ n $ vectors. 
          \item  (2) \ \ There exists a set \
                  $ {\mathsf B} := \{ \vec{v}_1, \vec{v}_2, \vec{v}_3,  \ldots , \vec{v}_n \} \subset X $ \
                  of \ $ n $ elements  such that \ $ {\mathsf B} $ \ is a $ \mathbb R$-linear 
                  independent set and \ $ {\mathsf B} \times {\mathsf B} \subset {\cal R}_X $,
                  i.e. all angles in \ $ {\mathsf B} $ \   are real.    
       \end{itemize}      
   \end{proposition}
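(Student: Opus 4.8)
The plan is to prove the two implications separately. The implication $(1) \Rightarrow (2)$ is immediate, while $(2) \Rightarrow (1)$ is the substantial direction and rests on Gram--Schmidt orthogonalisation carried out inside a real subspace. For $(1) \Rightarrow (2)$ I would simply take $ {\mathsf B} := {\mathsf T} $. An orthonormal system is $\mathbb C$-linearly independent, hence a fortiori $\mathbb R$-linearly independent, and for the members we have $ < \vec{x}_i \: | \: \vec{x}_j > \, = 0 \in \mathbb R $ when $ i \neq j $ and $ < \vec{x}_i \: | \: \vec{x}_i > \, = 1 \in \mathbb R $; thus $ {\mathsf B} \times {\mathsf B} \subset {\cal R}_X $ by Definition \eqref{paare mit reellem winkel}. (Alternatively this is a special case of Proposition \eqref{proposition reller winkel}.)

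For $(2) \Rightarrow (1)$ I would pass to the real span $ V := {\cal L}(\mathbb R)({\mathsf B}) $. Since $ {\mathsf B} $ is $\mathbb R$-linearly independent, $ V $ is a real vector space of dimension $ n $ with basis $ {\mathsf B} $. The first key step is to show that the complex inner product restricts to a genuine real inner product on $ V $. Using the homogeneity $ \overline{(1)} $ and the linearity $ \overline{(4)} $ in the one argument, and conjugate symmetry $ \overline{(2)} $ together with $ \overline{s} = s $ for real $ s $ in the other, any two real combinations satisfy
$$  < \sum_i r_i \vec{v}_i \ | \ \sum_j s_j \vec{v}_j > \ = \ \sum_{i,j} r_i \, s_j \cdot < \vec{v}_i \: | \: \vec{v}_j > \, , $$
which is a real number because every $ < \vec{v}_i \: | \: \vec{v}_j > $ is real by the hypothesis $ {\mathsf B} \times {\mathsf B} \subset {\cal R}_X $. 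Hence the restriction is real-valued on all of $ V \times V $; it is symmetric by $ \overline{(2)} $ (both sides being real), and positive definite by $ \overline{(3)} $. Thus $ (V, < \cdot \: | \: \cdot >) $ is an $ n $-dimensional real inner product space.

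The second key step is to apply the ordinary Gram--Schmidt procedure to the basis $ {\mathsf B} $ inside $ V $. Because all scalar products and norms occurring in the procedure are real, the resulting orthonormal vectors $ \vec{x}_1, \ldots , \vec{x}_n $ are real linear combinations of the $ \vec{v}_i $, so they again lie in $ V \subset X $, and they satisfy $ < \vec{x}_i \: | \: \vec{x}_j > \, = 0 $ for $ i \neq j $ and $ \| \vec{x}_i \| = 1 $ in the complex inner product, the complex and the restricted real product agreeing on $ V $. Therefore $ {\mathsf T} := \{ \vec{x}_1, \ldots , \vec{x}_n \} $ is an orthonormal system of $ n $ vectors in $ X $, establishing $(1)$.

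The main obstacle is precisely the verification that the restriction of $ < \cdot \: | \: \cdot > $ to $ V $ is a bona fide real inner product, in particular that it is real-valued on the whole of $ V \times V $ and not merely on $ {\mathsf B} \times {\mathsf B} $; this is exactly where the real bilinearity computation above is needed, and it is the place where the assumption that all \emph{pairwise} products are real is used. Once this is secured, Gram--Schmidt is entirely routine and orthonormality transfers verbatim from $ V $ to $ X $.
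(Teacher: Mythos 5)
Your proposal is correct and takes essentially the same route as the paper: $(1)\Rightarrow(2)$ by taking ${\mathsf B}:={\mathsf T}$, and $(2)\Rightarrow(1)$ by applying Gram--Schmidt to ${\mathsf B}$ inside the real span ${\cal L}(\mathbb R)({\mathsf B})$, where all coefficients and inner products stay real. If anything, you are more careful than the paper, which simply asserts that all inner products in ${\cal L}(\mathbb R)({\mathsf B})$ are real (your real-bilinearity computation is exactly the justification of that step) and which makes a superfluous case distinction between finite- and infinite-dimensional $X$, whereas your argument is uniform in the dimension.
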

       Before we make the easy proof we formulate a corollary. 
       \begin{corollary}   Let \ $ n \in \mathbb N $. \ 
                 We assume any $ n $-dimensional complex inner product space 
                 $(X, < \cdot \: | \cdot \: >) . \ ( \text{Hence the real dimension of the real vector space} \ X \
                 \text{is} \ \ 2 \cdot n ) $.     
                  
                 It follows that it does not be possible to find a set 
                 $ {\mathsf B} := \{ \vec{v}_1, \vec{v}_2, \ldots , \vec{v}_n, \vec{v}_{n+1} \} \subset X $ of
                 $ n+1 $ elements such that both $ {\mathsf B} $ is $\mathbb R$-linear independent, 
                 and all angles in $ \mathsf{ B }$ are real. 
       \end{corollary}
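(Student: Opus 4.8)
The plan is to argue by contradiction and invoke Proposition \eqref{endlich dimensionale vektorraeume}. Suppose a set $ {\mathsf B} := \{ \vec{v}_1, \ldots , \vec{v}_{n+1} \} \subset X $ of $ n+1 $ elements existed which is $ \mathbb{R} $-linearly independent and satisfies $ {\mathsf B} \times {\mathsf B} \subset {\cal R}_X $. This is exactly condition $(2)$ of Proposition \eqref{endlich dimensionale vektorraeume}, but with the integer $ n $ replaced by $ n+1 $. Applying the implication $(2) \Rightarrow (1)$ of that proposition (for $ n+1 $) would produce an orthonormal system $ {\mathsf T} := \{ \vec{x}_1, \ldots , \vec{x}_{n+1} \} \subset X $ consisting of $ n+1 $ vectors.

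First I would recall the standard fact that an orthonormal system is $ \mathbb{C} $-linearly independent: if $ \sum_{i=1}^{n+1} c_i \cdot \vec{x}_i = \vec{0} $ with $ c_i \in \mathbb{C} $, then forming the product with $ \vec{x}_j $ and using $ < \vec{x}_i \: | \: \vec{x}_j > \, = 0 $ for $ i \neq j $ and $ < \vec{x}_j \: | \: \vec{x}_j > \, = 1 $ gives $ c_j = 0 $ for every $ j $. Hence $ X $ would contain $ n+1 $ vectors that are $ \mathbb{C} $-linearly independent, which is impossible since the complex dimension of $ X $ equals $ n $. This contradiction shows that no such $ {\mathsf B} $ exists, which is the assertion of the corollary. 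The argument is short precisely because all the work sits in the proposition; the only genuinely new ingredient is the dimension count, and the only point that needs attention is that one applies the proposition with the index $ n+1 $, together with the remark that the complex dimension $ n $ caps the size of any orthonormal family at $ n $. I expect no real obstacle here.

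For completeness I would also sketch a self-contained route that bypasses the proposition and makes the geometric reason transparent. Put $ V := {\cal L}(\mathbb{R})({\mathsf B}) $, the real span of the hypothetical $ n+1 $ vectors; since $ {\mathsf B} $ is $ \mathbb{R} $-linearly independent, $ V $ is a real subspace of real dimension $ n+1 $. Because every product $ < \vec{v}_i \: | \: \vec{v}_j > $ is real, and real scalars are fixed by complex conjugation, the product of any two elements of $ V $ is again real. I would then verify the transversality $ V \cap ( {\bf i} \cdot V ) = \{ \vec{0} \} $: if $ \vec{w} = {\bf i} \cdot \vec{u} $ with $ \vec{u}, \vec{w} \in V $ and $ \vec{w} \neq \vec{0} $, then by the homogeneity of the product $ < \vec{w} \: | \: \vec{u} > \, = {\bf i} \cdot < \vec{u} \: | \: \vec{u} > \, = {\bf i} \cdot \| \vec{u} \|^{2} $ is nonzero and purely imaginary, contradicting that products inside $ V $ are real. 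Since multiplication by $ {\bf i} $ is a real-linear automorphism of $ X $, the spaces $ V $ and $ {\bf i} \cdot V $ are real subspaces of dimension $ n+1 $ meeting only in $ \vec{0} $, so $ V + {\bf i} \cdot V $ has real dimension $ (n+1) + (n+1) = 2 \cdot n + 2 $, exceeding $ \dim_{\mathbb{R}} X = 2 \cdot n $; this is absurd. In this second route the heart of the matter is exactly the transversality $ V \cap ( {\bf i} \cdot V ) = \{ \vec{0} \} $, which is the precise place where the complex structure forces the admissible real dimension down from $ n+1 $ to at most $ n $.
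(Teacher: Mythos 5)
Your first argument is precisely the paper's own proof: the paper disposes of the corollary with the single sentence that there are at most $n$ $\mathbb{C}$-linearly independent vectors in $X$, leaving implicit exactly what you spell out, namely the application of the implication $(2)\Rightarrow(1)$ of Proposition \eqref{endlich dimensionale vektorraeume} with $n+1$ in place of $n$, together with the observation that an orthonormal system is $\mathbb{C}$-linearly independent. Your second, self-contained route is correct and genuinely different: it bypasses the proposition (and hence Gram--Schmidt) entirely. Writing $V := {\cal L}(\mathbb{R})({\mathsf B})$, you use the $\mathbb{R}$-bilinearity of the product to see that all products inside $V$ are real, then the homogeneity $< {\bf i} \cdot \vec{u} \: | \: \vec{u} > \ = \ {\bf i} \cdot \|\vec{u}\|^{2}$ to get the transversality $V \cap ({\bf i} \cdot V) = \{\vec{0}\}$, and finally the real-dimension count $(n+1)+(n+1) = 2n+2 > 2n$. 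What each approach buys: the paper's route gets the corollary for free once the proposition is proved, but hides the mechanism inside Gram--Schmidt; your dimension argument isolates the actual geometric reason for the bound (a real subspace on which all products are real is moved off itself by multiplication by ${\bf i}$, so its real dimension can be at most half of $\dim_{\mathbb{R}} X$), and it would prove the corollary even for a reader who skips the proposition.
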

       \begin{proof}   
           There are at most  $n$ $\mathbb C$-linear independent vectors in $ X $.  
       \end{proof}
           Now we prove Proposition \eqref{endlich dimensionale vektorraeume}, which is easy.  
       \begin{proof}
             From (1) trivially it follows (2), because for the orthonormal system 
             $ {\mathsf T} = \{ \vec{x}_1, \vec{x}_2, \ldots , \vec{x}_n \} $ it holds that it is 
             $\mathbb C $-linear independent, hence $\mathbb R$-linear independent, and the inner product of
             two elements out of   $ {\mathsf T} $ is either $ 0 $ or $ 1 $. 
                                          
             Also, for complex inner product spaces of infinite dimension the conclusion from (2) to (1) is trivial,
             since in such a space the method of Gram-Schmidt yields  an orthonormal system with the 
             cardinality of $ \mathbb N $.  
                       
             We assume a complex inner product space  $ (X, < \cdot \: | \cdot \: >) $ \ with finite complex
             dimension and a set $ {\mathsf B} \subset X $ with the properties of (2), i.e. all inner products in 
             $ {\cal L}(\mathbb R)(\mathsf{ B }) $ are real. \
             We use the very well-known method of Gram-Schmidt to generate a set 
             $ \widehat{{\mathsf B}} := \{ \widehat{{v}_1}, \widehat{v_2}, \widehat{v_3}, 
             \ldots , \widehat{v_n} \} $ \  of $ n $ vectors. 
             This method yields a set $ \widehat{\mathsf B} $   which spans the same $n$-dimensional real subspace as
             $ {\mathsf B} $, i.e. $ {\cal L}(\mathbb R)(\mathsf{ B }) = {\cal L}(\mathbb R)(\widehat{\mathsf B}) $. \               By construction, the set $\widehat{\mathsf B}$  consists of $ n $ $\mathbb R $-linear independent
             vectors.  Because their inner product $ < \widehat{{v}_i} \: | \widehat{{v}_j} \: > $ is either $ 0 $ 
             or $ 1 $, the set $ \widehat{{\mathsf B}} $ \ even is an orthonormal system as defined in Definition 
             \eqref{definition orthonormal system}. This was required in (1), and 
             Proposition \eqref{endlich dimensionale vektorraeume} is proven.               
       \end{proof}   
       At last we demonstrate that real angles are very useful to do classical Euclidean geometry.  We are still 
       investigating complex inner  product spaces  $(X, < \cdot \: | \cdot \: >) $,  
       and we consider the desirable equation
    \begin{align}  \label{gleichung fuer summe von winkeln}  
          \angle(\vec{x}, \vec{y}) \ = \ \angle(\vec{x}, \vec{x}+\vec{y}) + \angle(\vec{x}+\vec{y}, \vec{y}) \ .  
    \end{align}
           Note that inner product spaces have the property $\overline{(4)}$, the linearity. 
           By using the known formula \\
   \centerline {$ \arccos(r) + \arccos(s) = \arccos\left( r \cdot s - \sqrt{1-r^{2}} \cdot \sqrt{1-s^{2}} \right)$} \\
           for real numbers \ $ -1 \leq r, s \leq +1 $ we can show that
           equation \eqref{gleichung fuer summe von winkeln}
           is fulfilled for a real angle  $ \angle(\vec{x}, \vec{y}) $, i.e.  
           $  < \vec{x} \: | \vec{y} \: > \ \in \, \mathbb R $. \  To demonstrate that a real angle is also 
           necessary, it is sufficient to consider the special case of unit vectors  $ \vec{x}, \vec{y} $. 
           The use of unit vectors simplifies the proof.  
    \begin{proposition} \label{proposition fuer summe von winkeln}
           In a complex inner product space $(X, < \cdot \: | \cdot \: >) $ let $ \vec{x}, \vec{y} $ be two 
           unit vectors. Then it holds equation  \eqref{gleichung fuer summe von winkeln} if and only if \
           $ (\vec{x}, \vec{y}) \in {\cal R}_X $, i.e. their angle $ \angle(\vec{x}, \vec{y}) $  is real.      
    \end{proposition}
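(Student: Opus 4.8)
The plan is to push everything through the single complex number $c := \langle \vec{x}\,|\,\vec{y}\rangle$ and to exploit that $\vec{x},\vec{y}$ are unit vectors. Since $\|\vec{x}\| = \|\vec{y}\| = 1$, Definition \eqref{die zweite definition} gives $\cos(\angle(\vec{x},\vec{y})) = c$ directly. Using the linearity $\overline{(4)}$ and conjugate symmetry $\overline{(2)}$ of the inner product together with $\|\vec{x}+\vec{y}\|^{2} = 2 + 2\,\mathrm{Re}(c)$, I would compute $\langle\vec{x}\,|\,\vec{x}+\vec{y}\rangle = 1+c$ and $\langle\vec{x}+\vec{y}\,|\,\vec{y}\rangle = c+1$, so that both middle cosines coincide:
\[
   \cos(\angle(\vec{x},\vec{x}+\vec{y})) \ = \ \cos(\angle(\vec{x}+\vec{y},\vec{y})) \ = \ \frac{1+c}{\sqrt{2+2\,\mathrm{Re}(c)}} \ =: \ \gamma \ .
\]
Here I assume $\vec{x} \neq -\vec{y}$ (equivalently $\mathrm{Re}(c) > -1$, which holds unless $c = -1$ by {\sf CSB} in an inner product space) so that $\vec{x}+\vec{y} \neq \vec{0}$ and the angles are defined. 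Because both middle angles lie in $\mathcal{A}$ and share the cosine $\gamma \in \mathcal{B}$, the injectivity of $\cos : \mathcal{A} \to \mathcal{B}$ from Proposition \eqref{proposition zwei} forces $\angle(\vec{x},\vec{x}+\vec{y}) = \angle(\vec{x}+\vec{y},\vec{y})$. Hence equation \eqref{gleichung fuer summe von winkeln} collapses to the single doubling relation $\angle(\vec{x},\vec{y}) = 2\,\arccos(\gamma)$.

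For the direction ``$(\vec{x},\vec{y}) \in {\cal R}_X \Rightarrow$ equation holds'' I would take $c$ real with $-1 \leq c \leq 1$, so $\gamma = \sqrt{(1+c)/2}$ is real and $2\gamma^{2}-1 = c$. Applying the real arccosine addition formula quoted before the proposition with $r = s = \gamma$ yields $\arccos(\gamma)+\arccos(\gamma) = \arccos(2\gamma^{2}-1) = \arccos(c) = \angle(\vec{x},\vec{y})$, which is exactly \eqref{gleichung fuer summe von winkeln}. This is the easy half and amounts to the classical half-angle identity.

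The converse is the more interesting direction, and I expect it to be the main point of the argument. Assuming \eqref{gleichung fuer summe von winkeln}, I would apply the cosine to both sides; using $\cos\circ\arccos = \mathrm{id}_{\cal B}$ from Proposition \eqref{proposition zwei} and the identity $\cos(2\theta) = 2\cos^{2}(\theta)-1$ (valid for complex $\theta$ via Euler's formula), the doubling relation forces $c = 2\gamma^{2}-1$. Writing $c = p + {\bf i}\cdot q$ with $p = \mathrm{Re}(c)$, a short computation gives
\[
   2\gamma^{2}-1 \ = \ p - \frac{q^{2}}{1+p} + {\bf i}\cdot 2q \ .
\]
Comparing imaginary parts of $c = 2\gamma^{2}-1$ yields $q = 2q$, hence $q = 0$, i.e. $c \in \mathbb{R}$ and $\angle(\vec{x},\vec{y})$ is real. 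The only delicate points are ensuring $\vec{x}+\vec{y}\neq\vec{0}$ so all three angles exist, and keeping track that the reduction to the doubling relation is legitimate; once that is in place, the imaginary-part comparison closes the argument immediately.
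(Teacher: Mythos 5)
Your proof is correct, and its computational core is the same as the paper's: both arguments reduce everything to $c=\;<\vec{x}\,|\,\vec{y}>$, observe that the two middle cosines are each equal to $(1+c)/\|\vec{x}+\vec{y}\|$, and arrive at the identical algebraic identity $2\gamma^{2}-1=p-\frac{q^{2}}{1+p}+{\bf i}\cdot 2q$, so that comparing with $c=p+{\bf i}\cdot q$ forces $q=0$. The difference lies in how the arccosine doubling is handled. The paper treats both directions at once: it converts the sum of the two middle angles into a single arccosine using Lemma \eqref{gleichung  arckosinus} (the complex identity $2\cdot\arccos(z)=\arccos(2\cdot z^{2}-1)$) and then compares arccosine arguments via injectivity. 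You split the equivalence: for the ``if'' direction you use the real addition formula for $\arccos$ (the very tool the paper invokes in the text preceding the proposition), and for the ``only if'' direction you apply $\cos$ to both sides, which requires only the unconditional identity $\cos(2\theta)=2\cos^{2}(\theta)-1$ together with $\cos\circ\arccos=\mathrm{id}_{\cal B}$ from Proposition \eqref{proposition zwei}. Your converse is in fact slightly more robust than the paper's route, because Lemma \eqref{gleichung  arckosinus} as stated for all $z\in{\mathbb C}$ is not literally true (for instance near $z=-1$ the left side has real part close to $2\pi$, outside the range ${\cal A}$ of the arccosine); it is legitimate in the paper's application only because $\mathrm{Re}(\gamma)>0$ keeps the doubled angle inside ${\cal A}$, a point the paper does not check, and which your cosine-application sidesteps entirely. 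You also make explicit the degenerate case $\vec{y}=-\vec{x}$ (where $\vec{x}+\vec{y}=\vec{0}$ and the middle angles are undefined), which the paper passes over silently when it divides by $1+r$.
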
    
           For the proof first we show a lemma
       \begin{lemma}   \label{gleichung  arckosinus}
           For all numbers $ z \in \mathbb C $ we have the identity \ 
           $  2 \cdot\arccos(z) = \arccos(2 \cdot z^{2} - 1) $. 
       \end{lemma}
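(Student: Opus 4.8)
The plan is to transport the asserted identity to the cosine side, where it collapses to the ordinary double-angle formula, and then to invert it using the fact from Proposition \eqref{proposition zwei} that $\cos$ and $\arccos$ are mutually inverse homeomorphisms between $\mathcal{A}$ and $\mathcal{B}$. First I would record the double-angle identity $\cos(2w) = 2\cos^{2}(w) - 1$, valid for every $w \in \mathbb{C}$. This is immediate from the exponential representation $\cos(w) = \tfrac{1}{2}\left(e^{{\bf i} \cdot w} + e^{-{\bf i} \cdot w}\right)$ noted just after Definition \eqref{die dritte definition}: squaring gives $\cos^{2}(w) = \tfrac{1}{4}\left(e^{2{\bf i} w} + 2 + e^{-2{\bf i} w}\right) = \tfrac{1}{2}\cos(2w) + \tfrac{1}{2}$, which rearranges to the claimed form.

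Next I would set $w := \arccos(z)$, so that $\cos(w) = z$ by Lemma \eqref{lemma cosine circ  arccosine}. Substituting into the double-angle formula yields $\cos\left( 2 \cdot \arccos(z)\right) = 2 z^{2} - 1$. In parallel, applying $\cos$ to the right-hand side of the claim and again invoking Lemma \eqref{lemma cosine circ  arccosine} gives $\cos\left(\arccos(2 z^{2} - 1)\right) = 2 z^{2} - 1$. Hence both sides of the asserted identity carry the same cosine value $2 z^{2} - 1$, which reduces the whole statement to passing from equality of cosines back to equality of the angles.

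That last passage is the step I expect to be the main obstacle. Since $\cos$ is injective on $\mathcal{A}$ (Proposition \eqref{proposition zwei}), equality of cosines forces equality of the arguments precisely once both arguments are known to sit in $\mathcal{A}$; the value $\arccos(2 z^{2} - 1)$ does so by construction, so the real work is to secure that the doubled angle $2 \cdot \arccos(z)$ also lands in $\mathcal{A}$ and that $2 z^{2} - 1 \in \mathcal{B}$, so that every symbol in the statement is meaningful. Writing $\arccos(z) = \tfrac{\pi}{2} + a + {\bf i} \cdot b \in \mathcal{A}$, the real part of $2 \cdot \arccos(z)$ is $\pi + 2 a$, so I would verify this membership and the domain of $\arccos(2 z^{2} - 1)$ directly through the explicit ${\mathsf{G}}_{\pm}$-formulas of Definition \eqref{die dritte definition}, tracking the ${\rm sgn}$-factors through the same case split ($r s > 0$, $r s = 0$, $r s < 0$) used in Lemma \eqref{lemma cosine circ  arccosine}, with the central-symmetry behaviour of Proposition \eqref{proposition eins} as the organising tool for the signs. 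Once this membership is in hand, applying $\arccos$ to the identity $\cos\left(2 \cdot \arccos(z)\right) = \cos\left(\arccos(2 z^{2} - 1)\right)$ and using $\arccos \circ \cos = id_{\mathcal{A}}$ from Proposition \eqref{proposition zwei} delivers $2 \cdot \arccos(z) = \arccos(2 z^{2} - 1)$ and finishes the lemma.
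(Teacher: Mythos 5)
Your skeleton is exactly the paper's proof: the paper derives $\cos(2 \cdot w) = 2 \cdot \cos^{2}(w) - 1$ from the exponential representation given after Definition \eqref{die dritte definition}, sets $w := \arccos(z)$, and then says that ``another application of the arccosine function'' finishes the argument. The step you single out as the main obstacle --- that $\arccos \circ \cos = id_{\cal A}$ may only be invoked once $2 \cdot \arccos(z) \in {\cal A}$ --- is precisely the step the paper passes over in silence, and your instinct that it needs securing is correct. But the verification you defer cannot be carried out: the membership fails in general, and the lemma as stated is false. In your notation $\arccos(z) = \frac{\pi}{2} + a + {\bf i} \cdot b \in {\cal A}$, the doubled angle $\pi + 2a + {\bf i} \cdot 2b$ lies in ${\cal A}$ only when $a < 0$ (plus the degenerate case $z = 0$), which by the ${\mathsf G}_{\pm}$ formulas happens essentially iff $\mathrm{Re}(z) > 0$. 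Concretely, for $z = -\frac{1}{2}$ both sides are defined but unequal: $2 \cdot \arccos\left(-\frac{1}{2}\right) = \frac{4\pi}{3}$, while $\arccos\left(2 \cdot \frac{1}{4} - 1\right) = \arccos\left(-\frac{1}{2}\right) = \frac{2\pi}{3}$. For nonzero purely imaginary $z = {\bf i} \cdot t$ the right-hand side is not even defined, since $2 \cdot z^{2} - 1 = -2t^{2} - 1$ is a real number below $-1$, hence outside ${\cal B}$; and of course $z$ itself must lie in ${\cal B}$, not in all of ${\mathbb C}$, for $\arccos(z)$ to make sense. No amount of sign-tracking through the ${\mathsf G}_{\pm}$ formulas can repair ``for all $z \in {\mathbb C}$''.

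The good news is that under the hypothesis $\mathrm{Re}(z) > 0$ your plan closes exactly as you describe: writing $z = r + {\bf i} \cdot s$ with $r > 0$, one checks ${\mathsf G}_{-} < 1$ (indeed ${\mathsf G}_{-} = 1$ forces $r = 0$), hence $\mathrm{Re}(\arcsin(z)) > 0$, hence $a = -\mathrm{Re}(\arcsin(z)) < 0$ and $2 \cdot \arccos(z) \in {\cal A}$; then $2 \cdot z^{2} - 1 = \cos(2 \cdot \arccos(z)) \in {\cal B}$ comes for free because $\cos$ maps ${\cal A}$ into ${\cal B}$, and $\arccos \circ \cos = id_{\cal A}$ from Proposition \eqref{proposition zwei} finishes the identity. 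This restricted version is all the paper ever uses: in the proof of Proposition \eqref{proposition fuer summe von winkeln} the lemma is applied to $z = \left(1 + < \vec{x} \, | \, \vec{y} > \right) / \left\| \vec{x} + \vec{y} \right\|$, whose real part is $(1 + r)/\left\| \vec{x} + \vec{y} \right\| > 0$ because $\left| < \vec{x} \, | \, \vec{y} > \right| \leq 1$ for unit vectors in an inner product space. So: same method as the paper, but your added care exposes a genuine defect in the statement itself --- the right conclusion of your membership analysis is not a verification but a corrected hypothesis, $\mathrm{Re}(z) > 0$ (or $z \in [0,1]$), which both your argument and the paper's application then satisfy.
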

       \begin{proof}
           After Definition \eqref{die dritte definition} we wrote \ 
           $ \cos\left( z \right)  =  \frac{1}{2} \cdot \left[  e^{ {\bf i} \cdot z} + e^{ -{\bf i} \cdot z} \right] $,
           and then we can prove easily \ $ \cos\left( 2 \cdot w \right)  =  2 \cdot \cos^{2}(w) - 1 $,  
           for \  $ w \in \mathbb C $. We set \ $ w := \arccos(z) $, and another application 
           of the  arccosine function gives  the desired equation of Lemma \eqref{gleichung  arckosinus}.    
       \end{proof}  
       \begin{proof}
            To prove Proposition \eqref{proposition fuer summe von winkeln}  we assume a complex number \ 
            $ r + {\bf i} \cdot s := \cos(\angle(\vec{x}, \vec{y}))$ \, with two unit vectors  
            $ \vec{x}, \vec{y}$, i.e.  $ \| \vec{x} \| = 1 = \| \vec{y} \| $. 
            By Definition  \eqref{die zweite definition}  of the angle $ \angle(\vec{x}, \vec{y})$ \
            this means   $  r + {\bf i} \cdot s =   <\vec{x} \: | \: \vec{y} > $. \ We consider the right hand side
            of equation \eqref{gleichung fuer summe von winkeln}, and we calculate 
        \begin{align*}
              &  \angle(\vec{x}, \vec{x}+\vec{y}) \ +  \ \angle(\vec{x}+\vec{y}, \vec{y}) \  = \ 
                \arccos \left( {\frac{ <\vec{x} \: | \: \vec{x} + \vec{y} > }
                          {\|\vec{x}\| \cdot \|\vec{x} + \vec{y} \| }} \right)  \ + \
                \arccos \left( {\frac{ <\vec{x} + \vec{y}\: | \: \vec{y}  > }
                                                   { \|\vec{x} + \vec{y} \| \cdot \|\vec{y}\| }} \right)  \\
              &   = \ \arccos \left( {\frac{ <\vec{x} \: | \: \vec{x} > +  <\vec{x} \: | \: \vec{y} > }
                      {\| \vec{x} + \vec{y} \| }} \right)  \ + \   \arccos \left( {\frac{ <\vec{x} \: | \: \vec{y} > +
                        <\vec{y} \: | \: \vec{y} > }{ \|\vec{x} + \vec{y} \| }} \right)  \\    
              & = \arccos  \left( 2 \cdot  \left[ {\frac{ 1 +  <\vec{x} \: | \: \vec{y} > }
                 {\| \vec{x} + \vec{y} \| }} \right]^{2}  - 1  \right) 
                                                \quad (\text{by Lemma  \eqref{gleichung  arckosinus}})   \\
              & = \arccos  \left( 2 \cdot  \left[ \frac{ 1 +  2 \cdot ( r + {\bf i} \cdot s ) + 
                             ( r + {\bf i} \cdot s  )^{2} }
                 {  <\vec{x}  + \vec{y} \: | \: \vec{x} + \vec{y} > } \right]  - 1  \right)    \\   
              & = \arccos  \left( 2 \cdot  \left[ \frac{ 1 +  2 \cdot r + r^{2} - s^{2} + 
                                {\bf i} \cdot ( 2 \cdot s +  2 \cdot r  \cdot s )  }
                 { 1 + 2 \cdot r + 1 } \right]  - 1  \right) \quad
                                    (\text{note} \ <\vec{y} | \vec{x}> = \overline{<\vec{x} | \vec{y}>} )   \\   
              & = \arccos  \left( \frac{ r + r^{2} - s^{2} + {\bf i} \cdot ( 2 \cdot s +  2 \cdot r  \cdot s ) }
                                                                                { 1 + r }   \right)      
              \ = \arccos  \left( r - \frac{s^{2}}{1+r}  + {\bf i} \cdot  2 \cdot s  \right)   \ . 
          \end{align*}   
          Obviously, the last term is equal \ $ \arccos(r + {\bf i} \cdot s) = \angle(\vec{x}, \vec{y})$ \   
          only in the case of \ $ s = 0 $, i.e. if and only if the angle $  \angle(\vec{x}, \vec{y})$ is real.
          The proof of Proposition \eqref{proposition fuer summe von winkeln} is finished.        
      \end{proof}   
       With the properties of Theorem \eqref{allererstes theorem} and   
       equation \eqref{gleichung fuer summe von winkeln}  we can do ordinary Euclidean geometry even in 
       complex Hibert spaces in those parts where real angles occur.  We make two examples. \
       As a first example we prove that the sum of inner angles in a triangle is $ \pi $. 
  \begin{proposition}   
        Let us assume a real angle   $  \angle(\vec{x}, \vec{y})$  in a complex Hilbert space. We get 
  $$ \angle(\vec{x}, \vec{y}) + \angle(-\vec{x}, \vec{y}-\vec{x}) +  \angle(-\vec{y}, \vec{x}-\vec{y}) \ = \ \pi . $$     \end{proposition}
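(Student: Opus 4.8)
The plan is to read the three summands as the interior angles of the triangle with vertices $\vec{0}, \vec{x}, \vec{y}$ and then recover the Euclidean angle sum from the catalogue in Theorem~\eqref{allererstes theorem}. I would abbreviate $\alpha := \angle(\vec{x}, \vec{y})$ (the angle at $\vec{0}$), $\beta := \angle(-\vec{x}, \vec{y}-\vec{x})$ (at $\vec{x}$) and $\gamma := \angle(-\vec{y}, \vec{x}-\vec{y})$ (at $\vec{y}$), working in the non-degenerate case $\vec{x}, \vec{y} \neq \vec{0}$ and $\vec{x} \neq \vec{y}$ so that each of these angles is defined. The strategy has two parts: first upgrade the single hypothesis ``$\alpha$ is real'' to the reality of \emph{all} angles that will appear, and then obtain $\alpha + \beta + \gamma = \pi$ from one application of the additivity relation \eqref{gleichung fuer summe von winkeln} together with the supplement rule ({\tt An} 7).

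First I would establish the reality step. Since $\alpha \in \mathbb{R}$ we have $<\vec{x}\,|\,\vec{y}> \,\in \mathbb{R}$, and because $<\vec{x}\,|\,\vec{x}>$ and $<\vec{y}\,|\,\vec{y}>$ are real by positive definiteness, expanding $< r\vec{x}+s\vec{y} \,|\, r'\vec{x}+s'\vec{y}>$ by real (bi)linearity shows that every product between two elements of $ {\cal L}(\mathbb{R})(\{\vec{x}, \vec{y}\}) $ is real; this is just the two-vector instance of Proposition~\eqref{proposition reller winkel}. Consequently $\angle(-\vec{x}, \vec{y})$, $\beta$, $\gamma$ and the auxiliary angles below all lie in $\mathbb{R}$, which is exactly what licenses the additivity \eqref{gleichung fuer summe von winkeln} (valid for real angles, as noted before Proposition~\eqref{proposition fuer summe von winkeln}) and what will let me turn the conjugate-symmetry rule ({\tt An} 4) into a genuine equality.

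The core of the argument is a single use of \eqref{gleichung fuer summe von winkeln} applied to the pair $(-\vec{x}, \vec{y})$, whose inserted ray is $(-\vec{x})+\vec{y} = \vec{y}-\vec{x}$:
\[ \angle(-\vec{x}, \vec{y}) \ = \ \angle(-\vec{x}, \vec{y}-\vec{x}) \ + \ \angle(\vec{y}-\vec{x}, \vec{y}) . \]
The first term is $\beta$ by definition. For the second, ({\tt An} 6) gives $\gamma = \angle(-\vec{y}, \vec{x}-\vec{y}) = \angle(\vec{y}, \vec{y}-\vec{x})$, and then ({\tt An} 4) with the reality just proved yields $\angle(\vec{y}-\vec{x}, \vec{y}) = \overline{\angle(\vec{y}, \vec{y}-\vec{x})} = \gamma$; hence $\angle(-\vec{x}, \vec{y}) = \beta + \gamma$. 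On the other hand ({\tt An} 7) gives $\angle(-\vec{x}, \vec{y}) = \pi - \alpha$, and comparing the two expressions yields $\alpha + \beta + \gamma = \pi$. I expect the only genuine obstacle to be the reality step: it is what simultaneously makes \eqref{gleichung fuer summe von winkeln} applicable (the relation fails for non-real angles, by Proposition~\eqref{proposition fuer summe von winkeln}) and collapses ({\tt An} 4) to an equality, and it rests entirely on the hypothesis $<\vec{x}\,|\,\vec{y}> \in \mathbb{R}$ propagating through the real span; the remaining manipulations are purely formal.
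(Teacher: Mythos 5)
Your proof is correct and takes essentially the same route as the paper's: both arguments combine the additivity relation \eqref{gleichung fuer summe von winkeln} (valid because the reality of $<\vec{x}\,|\,\vec{y}>$ propagates by real bilinearity to every angle in play) with ({\tt An} 4), ({\tt An} 6) and ({\tt An} 7). Your single application of \eqref{gleichung fuer summe von winkeln} to the pair $(-\vec{x},\vec{y})$, identifying the two pieces as $\beta$ and $\gamma$, is merely a tidier organization of the paper's computation, which decomposes both $\angle(-\vec{x},\vec{y}-\vec{x})$ and $\angle(-\vec{y},\vec{x}-\vec{y})$ and lets the redundant terms cancel.
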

  \begin{proof}  
        We use equation \eqref{gleichung fuer summe von winkeln}, and Theorem \eqref{allererstes theorem} 
        ({\tt An} 4),({\tt An} 6),({\tt An} 7). If we regard $\overline{(4)}$, the linearity, we see that 
        all angles in the following equation are real. We compute 
        \begin{align*}
          &    \  \angle(\vec{x}, \vec{y}) \ + \ [\angle(-\vec{x}, \vec{y}-\vec{x})] \ +  \ 
               [\angle(-\vec{y}, \vec{x}-\vec{y})]  \\
               \ = \ & \ \angle(\vec{x}, \vec{y}) \ + \
               \left[ \angle(-\vec{x}, \vec{y}) - \angle(-\vec{x}+\vec{y}, \vec{y}) \right] \ +  \ 
               \left[ \angle(-\vec{y}, \vec{x}) - \angle(-\vec{y}+\vec{x}, \vec{x}) \right]  \\    
               \ = \ & \ \ \pi \quad + \quad 0 \ \ = \ \ \pi \ .  
        \end{align*}
  \end{proof}   
       As a second example we consider the \, `Law of Cosines'. 
    \begin{proposition}               
       Let  $\vec{x}, \vec{y} \neq \vec{0} $ \ be two vectors  in a complex Hilbert space.
       It holds the `Law of Cosines'  
       \begin{align}  \label{kosinussatz}
                \| \vec{x} - \vec{y} \|^{2}  \ = \  \|\vec{x}\|^{2} +  \|\vec{y}\|^{2} 
                - 2 \cdot \|\vec{x}\| \cdot \|\vec{y}\| \cdot \cos (\angle(\vec{x}, \vec{y})) 
       \end{align}                    
       if and only if the angle $ \angle(\vec{x}, \vec{y}) $ is real, i.e. $(\vec{x}, \vec{y}) \in {\cal R}_X$,
       or in other words $ <\vec{x} \: | \: \vec{y}  > \, \in  \mathbb R$. 
    \end{proposition}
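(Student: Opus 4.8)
The plan is to evaluate both sides of equation \eqref{kosinussatz} independently and then compare them; the asserted equivalence will drop out of a single comparison of a complex number with its real part. First I would expand the left-hand side using only the inner product axioms of Section 2. Homogeneity $\overline{(1)}$, additivity $\overline{(4)}$, and conjugate symmetry $\overline{(2)}$ give
$$ \|\vec{x}-\vec{y}\|^{2} \ = \ <\vec{x}-\vec{y} \: | \: \vec{x}-\vec{y}> \ = \ <\vec{x} \: | \: \vec{x}> \ - \ <\vec{x} \: | \: \vec{y}> \ - \ <\vec{y} \: | \: \vec{x}> \ + \ <\vec{y} \: | \: \vec{y}> \ . $$
Since $<\vec{x} \: | \: \vec{x}> = \|\vec{x}\|^{2}$ and $<\vec{y} \: | \: \vec{y}> = \|\vec{y}\|^{2}$, and since $\overline{(2)}$ yields $<\vec{y} \: | \: \vec{x}> = \overline{<\vec{x} \: | \: \vec{y}>}$, the two mixed terms add up to twice the real part of $<\vec{x} \: | \: \vec{y}>$. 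Hence $ \|\vec{x}-\vec{y}\|^{2} = \|\vec{x}\|^{2} + \|\vec{y}\|^{2} - 2 \cdot \mathrm{Re}(<\vec{x} \: | \: \vec{y}>) $, an identity valid for every pair of vectors and independent of the angle.

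Next I would simplify the cosine term on the right-hand side. By Definition \eqref{die zweite definition} we have $ \angle(\vec{x},\vec{y}) = \arccos\left( \frac{<\vec{x} \: | \: \vec{y}>}{\|\vec{x}\| \cdot \|\vec{y}\|} \right) $, and by Corollary \eqref{corollary always angle} its argument lies in ${\cal B}$. Therefore Proposition \eqref{proposition zwei}, namely the identity $\cos \circ \arccos = id_{{\cal B}}$, applies and gives $ \cos(\angle(\vec{x},\vec{y})) = \frac{<\vec{x} \: | \: \vec{y}>}{\|\vec{x}\| \cdot \|\vec{y}\|} $. Multiplying by $\|\vec{x}\| \cdot \|\vec{y}\|$ shows that the entire right-hand side of equation \eqref{kosinussatz} equals $ \|\vec{x}\|^{2} + \|\vec{y}\|^{2} - 2 \cdot <\vec{x} \: | \: \vec{y}> $.

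Finally I would equate the two computed expressions. Equation \eqref{kosinussatz} then reduces to $ \mathrm{Re}(<\vec{x} \: | \: \vec{y}>) = <\vec{x} \: | \: \vec{y}> $, which holds if and only if $<\vec{x} \: | \: \vec{y}>$ is real. By Definition \eqref{paare mit reellem winkel} this is exactly the condition $(\vec{x},\vec{y}) \in {\cal R}_X$, and it is equivalent to $\angle(\vec{x},\vec{y})$ being real, completing the argument. I do not expect a genuine obstacle; the only delicate point is the step $\cos(\angle(\vec{x},\vec{y})) = \frac{<\vec{x} \: | \: \vec{y}>}{\|\vec{x}\| \cdot \|\vec{y}\|}$, where one must first confirm, via Corollary \eqref{corollary always angle}, that the argument lies in ${\cal B}$ so that $\cos$ really inverts $\arccos$ there.
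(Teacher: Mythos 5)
Your proof is correct and is essentially the paper's argument, just carried out in full: the paper disposes of the real-angle direction as ``straightforward'' and of the converse by noting that the left side of \eqref{kosinussatz} is real while the right side is not, which is exactly the reality comparison $\mathrm{Re}(<\vec{x} \: | \: \vec{y}>) = \, <\vec{x} \: | \: \vec{y}>$ that your unified computation makes explicit. Your care about $\cos \circ \arccos = id_{{\cal B}}$ via Corollary \eqref{corollary always angle} is a welcome detail, but it does not change the route.
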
   
    \begin{proof}  
             If the angle  $ \angle(\vec{x}, \vec{y}) $ is real, the proof of the law of cosines is straightforward. 
             In the case of a proper complex angle $ \angle(\vec{x}, \vec{y}) $ the right hand side of 
             equation \eqref{kosinussatz} is complex, while the left hand side is real. 
    \end{proof}     
    Note that the above theorems can be adapted to the complex situation, e.g. the `law of cosines' can be described by
    \ \ $ \| \vec{x} - \vec{y} \|^{2} \ = \ \|\vec{x}\|^{2} + \|\vec{y}\|^{2} - \|\vec{x}\| \cdot \|\vec{y}\| \cdot
    \left[ \cos (\angle(\vec{x}, \vec{y})) + \cos (\angle(\vec{y}, \vec{x}))\right] $, or alternatively \ \
    $ < \vec{x} \: | \: \vec{y} > \ = \ \cos(\angle (\vec{x}, \vec{y})) \cdot  \|\vec{x}\| \cdot \|\vec{y}\| $ .
                                                                                
    At the end we should mention that in the last decades other concepts of generalized angles in
    complex inner product spaces have been considered. Note that for the following concepts of angles 
    $($except $ \angle_3 $, see Theorem \eqref{conjecture csb}$)$ one can use all complex normed spaces, provided
    with the product of Definition \eqref{die erste definition}. 
                                        
    There were attempts to create pure real angles by the definitions \ $ \angle_1 , \ \angle_2 $, \ and $ \angle_3 $.
      \begin{equation*}   
	        \angle_1 (\vec{x}, \vec{y}) \ := \ \text{the real part of} \ (\angle (\vec{x}, \vec{y})) \ = \  
          \text{the real part of} \ 
          \left( \arccos{ \left( \frac{< \vec{x} \: | \: \vec{y} > }{\|\vec{x}\| \cdot \|\vec{y}\|} \right)} \right) , 
      \end{equation*} 
       or alternatively 
      \begin{equation*}   
	        \angle_2 (\vec{x}, \vec{y}) \ := \ \text{the arccosine of the real part of} \ 
	            \left( \frac{< \vec{x} \: | \: \vec{y} > }{\|\vec{x}\| \cdot \|\vec{y}\|} \right) . 
       \end{equation*}   
     The angle \ $ \angle_3 $ \ is defined by 
       \begin{equation*}   
	        \angle_3 (\vec{x}, \vec{y}) \ := \ \arccos(\varrho) \ , \ \ \text{for}  \ \
          \frac{< \vec{x} \: | \: \vec{y} > }{\|\vec{x}\| \cdot \|\vec{y}\|} \ = \ 
          \varrho \cdot e^{{\bf i} \cdot \varphi} \ \in \mathbb C \ .
       \end{equation*} 
       For more information and references see a paper \cite{Scharnhorst} by Scharnhorst. 
                                           
       An interesting position is held by Froda in \cite{Froda}.   For the complex number \  
           $ \frac{< \vec{x} \: | \: \vec{y} > }{\|\vec{x}\| \cdot \|\vec{y}\|} 
           \ = \ r + {\bf i} \cdot s $ \ \ he defined the complex angle 
        \begin{equation*}  
             \angle_4 (\vec{x}, \vec{y}) \ := \ \arccos{ (r) } + {\bf i} \cdot \arcsin{ (s) } \ . 
        \end{equation*} 
        Note that in the case of a pure real non-negative product \ $ < \vec{x} \: | \: \vec{y} > $ \ all four angles
        coincide with our angle  \ $ \angle (\vec{x}, \vec{y})$.  \ \  
        More investigations about these constructions are necessary.  
       { $ $ }   \\  \\
		  {\bf Acknowledgements: } We like to thank  Jeremy Goodsell for reading and correcting parts of the paper,
		  Eberhard  Oeljeklaus for hints and suggestions, and Wilfried Henning for some technical aid.  
     \bibliographystyle{mn} 
    
  \end{document}